\patchcmd{\section}{\normalfont}{\normalfont\color{darkblue}}{}{}
\patchcmd{\subsection}{\normalfont}{\normalfont\color{darkblue}}{}{}
\patchcmd{\subsubsection}{\normalfont}{\normalfont\color{darkblue}}{}{}
	\definecolor{darkblue}{rgb}{0.0, 0.0, 0.55}
\providecommand\@dotsep{5}\def\listtodoname{List of Todos}\def\listoftodos{\hypersetup{linkcolor=black}\@starttoc{tdo}\listtodoname\hypersetup{linkcolor=blue}}\makeatother
\newtheorem{lemma}{Lemma}
\newtheorem{proposition}{Proposition}
\newtheorem{theorem}{Theorem}
\newtheorem{definition}{Definition}
\theoremstyle{remark} 
\newtheorem{remark}{Remark}
\theoremstyle{question}
\newtheorem{question}{Question}
\newcommand{\bel}{\begin{equation} \label}
\newcommand{\ee}{\end{equation}}
\def\beq{\begin{equation}}
\def\eeq{\end{equation}}
\newcommand{\bea}{\begin{eqnarray}}
\newcommand{\eea}{\end{eqnarray}}
\newcommand{\beas}{\begin{eqnarray*}}
\newcommand{\eeas}{\end{eqnarray*}}
\newcommand{\pd}{\partial}
\def\C{\mathbb C}
\def\R{\mathbb R}
\def\Z{\mathbb Z}
\def\N{\mathbb N}
\def\M{\mathcal M}
\def\g{\textsl{g}}
\def\CI{\mathcal C}
\renewcommand{\leq}{\leqslant}
\renewcommand{\geq}{\geqslant}
\def\p{\partial}
\DeclareMathOperator{\Tr}{\textit{Tr}}
\title[A Semi-linear equation in Riemannian Geometry]{An inverse problem for a semi-linear Elliptic Equation in Riemannian Geometries}
\author[]{Ali Feizmohammadi}
\address[]{Department of Mathematics, University College London, London, UK-WC1E  6BT, United Kingdom}
\email{a.feizmohammadi@ucl.ac.uk}
\author[]{Lauri Oksanen}
\address[]{Department of Mathematics, University College London, London, UK-WC1E  6BT, United Kingdom}
\email{l.oksanen@ucl.ac.uk}
\begin{document}
\maketitle
\begin{abstract}
We study the inverse problem of unique recovery of a complex-valued scalar function $V:\M \times \C\to \C$, defined over a smooth compact Riemannian manifold $(\M,\textsl{g})$ with smooth boundary, given the Dirichlet-to-Neumann map, in a suitable sense, for the elliptic semi-linear equation $-\Delta_{\textsl{g}}u+V(x,u)=0$. We show that uniqueness holds for a large class of non-linearities when the manifold is conformally transversally anisotropic. The proof is constructive and is based on a multiple-fold linearization of the semi-linear equation near complex geometric optic solutions for the linearized operator and the resulting non-linear interactions. These interactions result in the study of a weighted integral transform along geodesics, that we call the Jacobi weighted ray transform. 
\end{abstract}
{
  \hypersetup{linkcolor=black}
  \tableofcontents
}

\section{Introduction} 
\label{sec1}
Let $(\M,\textsl{g})$ be a smooth compact Riemannian manifold with a smooth boundary $\pd \M$ and $\dim \M:=n\geq 3$. Let $\alpha \in (0,1)$ and consider an a priori unknown function $V:\M \times \C \to \C$. We make the following standing assumptions.
\begin{itemize}
\label{nonlinear_ass}
\item[(i)]{$V(\cdot,z) \in \CI^{\alpha}(\M),\quad \forall z \in \C$,} 
\item[(ii)]{$V(x,0)=0,\quad \forall x \in \M$,}
\item[(iii)]{$V$ is analytic with respect to $z$ in the $\CI^{\alpha}(\M)$ topology,}
\end{itemize}
where $\CI^{\alpha}(\M)$ is the space of H\"{o}lder continuous complex-valued functions with exponent $\alpha$. By analyticity with respect to $z \in \C$ we mean that the following limit exists in the $\CI^{\alpha}(\M)$ topology,
$$\pd_z V(x,z):=\lim_{h \to 0} \frac{V(x,z+h)-V(x,z)}{h}.$$
As a result of analyticity, the function $V$ admits a power series representation in the $C^{\alpha}(\M)$ topology given by the expression
\bel{power_series} V(x,z)=\sum_{k=1}^\infty V_k(x)\, \frac{z^k}{k!},\ee
where $V_k(x):=\pd^k_z V(x,0)\in \CI^{\alpha}(\M)$. We additionally impose the following conditions on the set of admissible functions $V(x,z)$:
\begin{itemize}
\item[(iv)]{$0$ is not a Dirichlet eigenvalue for the operator $-\Delta_{\g}+V_1(x)$ on $(\M,\g)$.} 
\end{itemize}
Here, $\Delta_{\textsl{g}}$ denotes the Laplace-Beltrami operator on $(\M,\textsl{g})$ given in local coordinates by the expression $\Delta_{\textsl{g}}=\sum_{j,k=1}^{n} \frac{1}{\sqrt{\textsl{g}}}\frac{\pd}{\pd x^j}\left(\sqrt{\textsl{g}}\, \textsl{g}^{jk}\frac{\pd }{\pd x^k} \right).$ 
\\

In this paper, we consider the semi-linear elliptic equation 
\bel{pf1}
\begin{aligned}
\begin{cases}
-\Delta_{\textsl{g}} u+V(x,u)=0, 
&\forall x \in \M
\\
u= f \in B^{\alpha}_{r_0}(\pd \M),
&\forall x \in \pd \M
\end{cases}
    \end{aligned}
\ee
where $B^{\alpha}_{r_0}(\pd\M):=\{h\in \CI^{2,\alpha}(\pd \M)\,|\,\|h\|_{\CI^{2,\alpha}(\pd \M)}\leq r_0\}$. In Section~\ref{direct}, we show that, given fixed $r_0,r_1>0$ sufficiently small, equation \eqref{pf1} admits a unique solution $u \in B^{\alpha}_{r_1}(\M)$. Moreover, there exists a constant $C>0$ depending only on $r_0,r_1$ such that  
\bel{dirichletcont}\|u\|_{\CI^{2,\alpha}{(\M)}}\leq C \|f\|_{\CI^{2,\alpha}(\pd \M)} \quad \forall f \in B^{\alpha}_{r_0}(\pd \M).\ee
We subsequently define the Dirichlet-to-Neumann (DN) map, $\Lambda_{V}$, for equation \eqref{pf1} through the expression
\bel{DNsemi}
\CI^{1,\alpha}(\pd\M)\ni \Lambda_{V} f:= \pd_\nu u |_{\pd \M}, \quad \forall f \in B^{\alpha}_{r_0}(\pd\M), 
\ee
where $\nu$ denotes the unit outward normal vector field on $\pd \M$. This paper is concerned with the following question.
\begin{question}{}{}
\label{ques}
Given the map $\Lambda_{V}$, can one uniquely determine the function $V$?
\end{question}
We will briefly review the history related to inverse problems for non-linear elliptic equations in Section~\ref{literature}. For now, let us recall some facts about the case where $V(x,z)\equiv V_1(x)z$. In this case, the problem reduces to a version of the Calder\'{o}n conjecture \cite{MR590275}.  This formulation of the conjecture has been extensively studied but remains open in general geometries $(\M,\g)$ with dimension $n \geq 3$. Uniqueness of the coefficient $V_1$ has been proved for analytic metrics with an analytic function $V_1$ \cite{MR1029119}, the Euclidean metric \cite{MR1370758,MR873380} and the hyperbolic metric \cite{10.2307/40067898}. Beyond these cases, the most general uniqueness result is obtained in the so-called {\em conformally transversally anisotropic} (CTA) geometries defined as follows.
\bigskip
\begin{definition}
\label{CTA}
Let $(\M,\g)$ be a compact oriented smooth Riemannian manifold with smooth boundary and dimension $n$. We say that $(\M,\g)$ is conformally transversally anisotropic, if $n\geq3$ and the following embedding holds:
$$\M\subset I^{\text{int}} \times M^{\text{int}}\quad\text{and}\quad \g(x^0,x')=c(x^0,x')((dx^0)^2\oplus g(x')),$$ 
where $I$ is a finite interval, $c(x^0,x')>0$ is a smooth function and $(M,g)$ is a smooth compact orientable manifold of dimension $n-1$ with a smooth boundary $\pd M$. 
\end{definition}
\bigskip
In \cite{DosSantosFerreira2009} it was proved that in the linear case $V(x,z)=V_1(x)z$, the Dirichlet-to-Neumann map $\Lambda_{V}$ uniquely determines a bounded function $V_1$, under the strong assumption that the transversal manifold is {\em simple}, that is to say $(M,g)$ has a strictly convex boundary and given any two points in $M$ there exists a unique geodesic connecting them. This result was subsequently strengthened in \cite{MR3562352} where the authors showed that $\Lambda_{V_1}$ uniquely determines $V_1$, if the geodesic ray transform is injective on the transversal manifold. The inversion of the geodesic ray transform is open in general, and has only been proved under certain geometrical assumptions, see for example the discussion in \cite[Section 1]{MR3562352}. For a broad review of the Calder\'{o}n conjecture, and alternative formulations with the presence of non-linear coefficients, we refer the reader to survey articles \cite{MR3460047,SLSEDP_2012-2013____A13_0}.
\subsection{Main results}
Let us return to Question~\ref{ques}. We will consider only the case where $(\M,\g)$ is a CTA manifold. Before stating our results let us briefly review some notations for geodesic dynamics on $(M,g)$. Let $SM\subset TM$ denote the unit sphere bundle on $(M,g)$ and $\gamma(\cdot,x,\theta)$ be the unit speed geodesic with initial data $(x,\theta)$. For all $(x,\theta) \in SM^{\text{int}}$, we define the exit times
\bel{tau}\tau_{\pm} =\sup \,\{r>0\,|\, \gamma(\pm r;x,\theta) \in \pd M,\quad \dot{\gamma}(\pm r;x,\theta) \notin T\pd M\},\ee
and subsequently call a geodesic $\gamma$ to be maximal, if and only if $\tau_{\pm}<\infty$. Next, we define an admissibility condition on the transversal manifold $(M,g)$ as follows.
\begin{definition}
\label{adm}
Let $(M,g)$ be a smooth compact Riemannian manifold with boundary. We say that $(M,g)$ is admissible if there exists a dense set of points $\mathscr T \subset M$ such that given any point $p \in \mathscr T$ there exists a non-self-intersecting maximal geodesic $\gamma$ through $p$ that contains no conjugate points to $p$.
\end{definition}
The first result in this paper can now be stated as follows.
\begin{theorem}
\label{t1}
Let $(\M,\g)$ be a CTA manifold such that the transversal manifold $M$ is admissible. Suppose that $V(x,z)$ satisfies conditions (i)--(iv), that $V_1$ is smooth and that $V_1, V_2$ are a priori known. Then, the Dirchlet-to-Neumann map $\Lambda_{V}$ uniquely determines the function $V$.
\end{theorem}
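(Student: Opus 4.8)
The plan is to recover the Taylor coefficients $V_k$ of \eqref{power_series} one at a time, by induction on $k\geq1$, using a $k$-th order linearization of \eqref{pf1} at the zero solution. The cases $k=1,2$ are covered by the hypothesis that $V_1$ and $V_2$ are known, so suppose $k\geq3$ and that $V_1,\dots,V_{k-1}$ have already been determined. Set $L_{\g}:=-\Delta_{\g}+V_1$, which has a well posed Dirichlet problem by assumption (iv). Inserting boundary data $f=\sum_{j=1}^{k}\epsilon_j f_j$ into \eqref{pf1}, using the well posedness and estimate \eqref{dirichletcont} from Section~\ref{direct}, and applying $\pd_{\epsilon_1}\cdots\pd_{\epsilon_k}|_{\epsilon=0}$, one finds that $w:=\pd_{\epsilon_1}\cdots\pd_{\epsilon_k}u|_{\epsilon=0}$ solves
\[
L_{\g}w=-V_k\,u_1\cdots u_k-R,\qquad w|_{\pd\M}=0,
\]
where $u_j$ solves $L_{\g}u_j=0$ with $u_j|_{\pd\M}=f_j$, and $R$ is an explicit polynomial expression in $V_2,\dots,V_{k-1}$ and in the lower-order mixed derivatives of $u$, hence already known. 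Extracting $\pd_\nu w|_{\pd\M}$ from $\Lambda_V$ by differentiating in $\epsilon$, pairing it with an arbitrary solution $v$ of $L_{\g}v=0$, and integrating by parts gives the identity
\[
\int_{\M}V_k\,u_1\cdots u_k\,v\,dV_{\g}=\bigl(\text{a quantity determined by }\Lambda_V,\ V_1,\dots,V_{k-1}\bigr).
\]

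Next I would exploit the CTA structure. On $\M\subset I\times M$, conjugating $L_{\g}$ by a suitable power of the conformal factor $c$ turns it into a Schr\"odinger operator $-\Delta_{(dx^0)^2\oplus g}+q$ with $q$ a smooth function of $c$ and $V_1$ — here it is essential that $V_1$ is smooth. Such operators on a product admit complex geometric optics solutions $u=e^{-sx^0}\bigl(v_s(x')+r_s\bigr)$ with $s=\tfrac1h+i\lambda$, where $v_s$ is a Gaussian beam quasimode for $-\Delta_g-s^2+q$ concentrating as $h\to0$ on a prescribed unit speed geodesic $\gamma$ of $(M,g)$ and $r_s$ is a correction negligible compared with $v_s$. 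I would fix a point $p$ in the dense set $\mathscr T$ of Definition~\ref{adm} and take $\gamma$ to be the associated non-self-intersecting maximal geodesic through $p$ with no conjugate points — absence of conjugate points being exactly what makes the Gaussian beam amplitude a globally defined, nonvanishing solution of its transport equation along $\gamma$, while absence of self-intersections ensures the concentration is on a single arc. One then plugs $k$ such solutions $u_1,\dots,u_k$ with parameters $s_j=\tfrac1h+i\lambda_j$, together with a solution $v$ carrying a compensating parameter $s_{k+1}$ with imaginary part $\lambda_{k+1}$ and real part $-k/h$, chosen so that the product of the real exponentials equals $1$, into the integral identity above.

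Passing to the limit $h\to0$ localizes the transversal part of the integral to $\gamma$, while the $x^0$-integration produces a Fourier transform in $x^0$ at the frequency $\lambda_1+\dots+\lambda_{k+1}$; the conclusion is that $\Lambda_V$ and the known data determine
\[
\int_{\R}\int_{\R}V_k\bigl(x^0,\gamma(t)\bigr)\,W(t)\,e^{i\xi x^0}\,dt\,dx^0
\]
for every $\xi\in\R$ and every such geodesic $\gamma$, where $W$ is an explicit weight built from the conformal factor along $\gamma$ and from the product of the $k+1$ Gaussian beam amplitudes, hence from Jacobi fields along $\gamma$; this is the \emph{Jacobi weighted ray transform}. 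Undoing the Fourier transform in $x^0$ and then inverting this weighted ray transform — using that $W$ is nonvanishing because $\gamma$ has no conjugate points, that such geodesics pass through a dense set of points, and that $V_k\in\CI^{\alpha}(\M)$ is continuous — recovers $V_k$ on all of $\M$, which closes the induction and, through \eqref{power_series}, determines $V$.

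I expect the crux to be this last step: turning the Gaussian beam construction into precise statements for which the product of $k+1$ beams has a finite, nonzero $h\to0$ limit — the right normalization in powers of $h$, uniform control of the remainders $r_s$, and honest quasimode error bounds when $V_1$ is merely smooth — and then proving injectivity of the resulting Jacobi weighted ray transform on continuous functions under the weak geometric hypothesis of Definition~\ref{adm}, rather than under injectivity of the ordinary geodesic ray transform. Understanding the weight $W$, and in particular using the freedom in the transversal Gaussian profiles of the individual beams to shape it, is where the Jacobi field structure must really be exploited; by contrast, verifying that the remainder $R$ in the higher-order linearization contributes only already-known quantities, though it requires care, is essentially bookkeeping.
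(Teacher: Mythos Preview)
Your outline is broadly on target --- higher-order linearization, CGO solutions concentrating on an admissible geodesic, reduction to a weighted ray transform along $\gamma$, and an induction on $k$ --- but two choices diverge from what the paper actually does, and one of them matters.

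\medskip

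\textbf{Choice of CGOs.} You propose to use $k$ CGO solutions with large parameter $\tfrac1h$ and one compensating solution with parameter $-\tfrac{k}{h}$. The paper instead uses exactly \emph{four} CGOs for every $m\geq3$: two copies of $\mathcal U_\rho^{+}$ and two of $\mathcal U_\rho^{-}$, with the same $|\rho|$, so that the product $(\mathcal U_\rho^{+}\mathcal U_\rho^{-})^{2}$ has balanced exponentials. For $m\geq4$ the remaining $m-3$ slots are filled by a single fixed solution $W_p$ of $\mathcal P_{V_1}W_p=0$ with $W_p(p)\neq0$ (Lemma~\ref{nonvanishing}); this replaces $V_m$ by $\tilde V_m=V_m W_p^{\,m-3}$ and reduces every step of the induction to the \emph{same} asymptotic computation as for $m=3$. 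The payoff is that the weight arising from stationary phase is always $|\det Y(t)|^{-1}$ --- the Jacobi transform of the second kind --- independent of $m$. Your asymmetric parameters would produce a $k$-dependent complex weight (roughly a power of $\det Y$ rather than of $|\det Y|$), so you would need a separate inversion result for each $k$.

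\medskip

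\textbf{Role of the conjugate-point hypothesis.} You write that absence of conjugate points is what makes the Gaussian beam amplitude globally defined and nonvanishing. For the \emph{complex} beams used here this is not the issue: $\det Y(t)$ never vanishes once $\Im H_0>0$ (Lemma~\ref{Ywell-posed}), regardless of conjugate points. The hypothesis that $\gamma$ contains no points conjugate \emph{to $p$} enters only in the inversion step (Proposition~\ref{jacobi ray inversion}): one takes a family $Y^{\epsilon}$ of Jacobi matrices degenerating at $p$ as $\epsilon\to0$, so that $|\det Y^{\epsilon}|^{-1}$ develops a singularity there; the conjugate-point hypothesis guarantees this is the \emph{only} singularity on $\gamma$, whence the transform localizes and yields $f(p)$. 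So ``$W$ nonvanishing'' is not the mechanism --- it is the freedom to make $W$ \emph{nearly singular} at a prescribed point, combined with the absence of spurious singularities elsewhere, that gives pointwise recovery. You do gesture at ``shaping the weight via the transversal profiles,'' which is indeed the right lever.
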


The proof of this theorem relies on a multiple-fold linearization of \eqref{pf1} that results in the interaction of the so called complex geometric optic solutions for the corresponding linearized equation. Since $V_1$ is assumed to be known, the complex geometric optic solutions will be known as well. The smoothness assumption on $V_1$ is imposed in order to make these solutions smooth and also to simplify the task of proving suitable decay rates (see Proposition~\ref{cgo_sol}). Under the assumption that $V_2$ is assumed to be known, the non-linear interaction of the complex geometric optic solutions will result in a weighted ray transform along geodesics on the transversal manifold $M$. This weighted transform will be shown to be invertible along a single geodesic (see Proposition~\ref{jacobi ray inversion}).  

Our second main result is concerned with the recovery of the function $V$ without imposing the assumption that the coefficient $V_2$ is known, in the cases where the manifold is three or four dimensional. 
\begin{theorem}
\label{t2}
Let $(\M,\g)$ be a three or four dimensional CTA manifold such that given any point on the transversal manifold $M$ there exists a maximal non-self-intersecting geodesic without conjugate points through that point. Suppose that $V(x,z)$ satisfies conditions (i)-(iv) and that $V_1$ is a priori known and smooth. Then the Dirichlet-to-Neumann map $\Lambda_V$ uniquely determines the function $V$.
\end{theorem}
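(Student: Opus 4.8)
The plan is to reduce Question~\ref{ques}, by a higher-order linearization of \eqref{pf1}, to the inversion of a family of Jacobi weighted ray transforms, and to recover the Taylor coefficients $V_k$ of \eqref{power_series} one at a time by induction on $k$.

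First I would set up the linearization hierarchy. Fix boundary data $f=\sum_{j=1}^{N}\epsilon_j f_j$ with $f_j\in B^\alpha_{r_0}(\pd\M)$ and $|\epsilon|$ small; by the well-posedness of \eqref{pf1} (Section~\ref{direct}) and the analyticity of $V$, the solution $u(\,\cdot\,;\epsilon)$ is analytic in $\epsilon$ near $0$ with $u(\,\cdot\,;0)=0$. Writing $u_j:=\pd_{\epsilon_j}u|_{\epsilon=0}$ one has $(-\Delta_\g+V_1)u_j=0$ in $\M$ with $u_j|_{\pd\M}=f_j$, and differentiating \eqref{pf1} $m$ times in $\epsilon$ and using \eqref{power_series}, the mixed derivative $w:=\pd_{\epsilon_1}\cdots\pd_{\epsilon_m}u|_{\epsilon=0}$ solves $(-\Delta_\g+V_1)w=F$ in $\M$ with $w|_{\pd\M}=0$, where $F$ is a universal polynomial in $V_2,\dots,V_m$ and the lower-order linearizations, with top-order term $-V_m\,u_1\cdots u_m$. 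For any $v$ solving $(-\Delta_\g+V_1)v=0$, Green's formula together with condition (iv) gives
\beq
\int_{\M} F\, v\; dV_\g \;=\; \int_{\pd\M} (\pd_\nu w)\, v\; dS,
\eeq
whose right-hand side is determined by $\Lambda_V$ (it is the $m$-th order $\epsilon$-derivative of $\Lambda_V(\sum\epsilon_j f_j)$ paired with $v|_{\pd\M}$). Hence, inductively, once $V_1,\dots,V_{m-1}$ are known the lower-order part of $F$ is known and we recover
\beq\label{planid}
\int_{\M} V_m\; u_1\cdots u_m\, v\; dV_\g,\qquad\text{for all solutions }u_1,\dots,u_m,v\text{ of }(-\Delta_\g+V_1)(\cdot)=0 .
\eeq
The base case $m=2$ uses only that $V_1$ is known, which is part of the hypothesis of Theorem~\ref{t2}.

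For $m\ge 3$ the recovery step would follow the scheme behind Theorem~\ref{t1}. Embedding $\M\subset I^{\inter}\times M^{\inter}$ with $\g=c((dx^0)^2\oplus g)$, Proposition~\ref{cgo_sol} supplies complex geometric optics solutions of $(-\Delta_\g+V_1)(\cdot)=0$ of the form $u=e^{(x^0+i\varphi)/h}(a+r_h)$, where $a$ is built from a Gaussian beam concentrating on a prescribed maximal, non-self-intersecting, conjugate-point-free geodesic $\gamma$ of $(M,g)$ and $\|r_h\|\to0$ fast enough (here the smoothness of $V_1$ enters) that every cross term involving some $r_h$ drops out in the limit. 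I would substitute into \eqref{planid} two such beam solutions on $\gamma$ together with $m-1$ further CGO solutions whose $x^0$-frequencies are arranged so the leading exponentials cancel and whose transversal phases are arranged so that, as $h\to0$, the transversal integral localizes to $\gamma$ and \eqref{planid} becomes $\mathcal J_\gamma\!\big(V_m|_\gamma\big)=\text{known}$, with $\mathcal J_\gamma$ the Jacobi weighted ray transform along $\gamma$. Proposition~\ref{jacobi ray inversion} then recovers $V_m$ along $\gamma$; letting $\gamma$ and the $x^0$-slice vary and using continuity of $V_m\in\CI^\alpha(\M)$ recovers $V_m$ on all of $\M$.

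The main obstacle is the case $m=2$, which is exactly what distinguishes Theorem~\ref{t2} from Theorem~\ref{t1}. Here \eqref{planid} involves only the \emph{three} solutions $u_1,u_2,v$, and the CGO interaction is far more rigid: three CGO's cannot have their leading $x^0$-exponentials $e^{\pm x^0/h}$ cancel with a symmetric choice of unit frequencies, so one is forced into an asymmetric frequency configuration, and almost no freedom is left to prescribe the transversal phases. I expect that the limiting identity then controls $\mathcal J_\gamma(V_2|_\gamma)$ only against a restricted range of transversal frequencies, the remaining ones having to be recovered by varying $\gamma$; a dimension count should show that this procedure closes precisely when the transversal manifold $M$ has dimension $\le 3$, i.e. $n\in\{3,4\}$ — which is also why one must assume that a good geodesic passes through \emph{every} point of $M$ rather than just a dense set. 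Executing this three-fold interaction, reducing it to $\mathcal J_\gamma$, and verifying the dimension count will be the hard part; granted that, the induction of the second paragraph produces all $V_k$, $k\ge 2$, and with the known $V_1$ and the expansion \eqref{power_series} this determines $V$.
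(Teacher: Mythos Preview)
Your overall architecture is right: higher-order linearization, Green's identity, reduction to a weighted ray transform along $\gamma$, and induction on $m$. For $m\ge 3$ this is essentially the paper's proof of Theorem~\ref{t1}, so once $V_2$ is known the rest goes through.

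The gap is in your account of the $m=2$ step. You correctly see that with only three solutions $u_1,u_2,v$ one is forced into an asymmetric frequency configuration; the paper takes $u_1=u_2=\mathcal U_\rho^+$ and $v=\mathcal U_{2\rho}^-$ (two beams at $\rho$, one at $2\rho$) so that the $e^{\pm\lambda x^0}$ factors cancel. But the reason this forces $n\in\{3,4\}$ is \emph{not} a dimension count on transversal frequencies. What changes is the weight in the limiting transform: with four solutions, stationary phase produces $|v_{00}|^4\sim|\det Y|^{-2}$ and, after the $(\det\Im H)^{-1/2}$ normalization from \eqref{stationary_ph_0}, the weight $|\det Y|^{-1}$ (the transform of the second kind, inverted by Proposition~\ref{jacobi ray inversion} in all dimensions). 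With only three solutions one gets $|v_{00}|^2 v_{00}\sim |\det Y|^{-1}(\det Y)^{-1/2}$, and after normalization the weight is $(\det Y)^{-1/2}$ --- the transform of the \emph{first} kind. Its inversion (Proposition~\ref{jacobi ray inversion 1}) is only proved for $\dim M\in\{2,3\}$, for the analytic reasons recorded in Remark~\ref{rmk_jacobi}; this, not a frequency-count argument, is the source of the dimension restriction.

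There is a second ingredient you are missing for $m=2$: Proposition~\ref{jacobi ray inversion 1} applies to \emph{real-valued} integrands, so the paper also runs the dual choice $u_1=u_2=\mathcal U_\rho^-$, $v=\mathcal U_{2\rho}^+$, obtains the complex conjugate weight $\overline{(\det Y)^{-1/2}}$, and combines the two identities to get $\mathscr J^{(1)}_Y$ of $\Re\mathscr F V_2$ and $\Im\mathscr F V_2$ separately. Only then can Proposition~\ref{jacobi ray inversion 1} be invoked. Your proposal should replace the speculative ``dimension count'' paragraph with these two concrete mechanisms.
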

The proof of this theorem mostly follows the same technique as the previous theorem. However, due to the weaker assumption on the coefficient $V$, namely that $V_2$ is unknown, the non-linear interaction of the complex geometric optic solutions results in a different ray transform along geodesics on the transversal manifold $M$. The inversion of this transform along a single geodesic is proved when the transversal manifold is two or three dimensional and left open in higher dimensions (see Proposition~\ref{jacobi ray inversion 1}). We also refer the reader to Remark~\ref{rmk_jacobi} in Section~\ref{section_jacobi} where the restriction to three and four dimensions is discussed further.

\subsection{Previous literature}
\label{literature}
The study of non-linear partial differential equations is an interesting topic in its own right, due to the complexity of the subject matter and as such, the corresponding inverse problems also carry significant mathematical interest. However, let us point out that there are applications for these inverse problems outside the realm of mathematics as well. Indeed, a large class of inverse problems for elliptic nonlinear equations can be seen as the study of stationary solutions to nonlinear equations describing physical phenomena. For example, we mention the nonlinear Schr\"{o}dinger equation that arises as nonlinear variations of the classical field equations and has applications in the study of nonlinear optical fibers, planar wave guides and Bose Einstein condensates \cite{Boris_2005}. Other examples include nonlinear Klein-Gordon or Sine-Gordon equations with applications to the study of general relativity \cite{NAKAMURA2014445} and relativistic super-fluidity \cite{Waldron_2017} respectively.

The majority of the literature dealing with inverse problems for non-linear elliptic equations is in the Euclidean geometry. The first uniqueness result was obtained by Isakov and Sylvester in \cite{MR1295934} where the authors considered a Euclidean domain of dimension greater than or equal to three with non linear functions $V(x,u)$ that satisfy the homogeneity property (ii), and showed that under a monotonicity condition for $V$ and suitable bounds on $V$, $\p_u V$ and $\p^2_{u}V$, the non-linearity can be uniquely recovered on a specific subset of $\M \times \R$. There, it was also proved that under a stronger bound on $V$, it could be recovered everywhere. Removing the homogeneity property (ii) introduces a natural gauge for the uniqueness of the non-linearity. This was studied by Sun in \cite{MR2602870} under similar smoothness and monotonicity assumptions. There, a similar uniqueness result as in \cite{MR1295934} was proved (up to the natural gauge), under the additional assumption that a common solution exists. 

In dimension two, the problem was first solved by Sylvester and Nachman in \cite{MR1311909}, where the authors considered a domain in two-dimensional Euclidean space with a Carath\'{e}odory type non-linearity that has a continuous bounded $L^p$-valued derivative in the $u$ variable and proved unique recovery of the non-linearity. In \cite{MR2506853} uniqueness is proved for yet another family of admissible non-linearities in two dimensional Euclidean domains. There, a connection is also made between the theoretical study of these types of semi-linear inverse problems and the physical study of semi-conductor devices and ion channels. We also mention the work of Imanuvilov and Yamamoto in \cite{IY13} where the authors considered the partial data problem for the operator $\Delta u + q(x)u+ V(x,u)$ on arbitrary open subsets of the boundary in two dimensions. There it was shown that if $V(x,0)=\p_u V(x,0)=0$, it is possible to uniquely recover $q$ everywhere and also that it is possible to recover $V$ in certain subsets of the domain, under suitable bounds on the non-linear function $V$.

Aside from the study of inverse problems for semi-linear equations in Euclidean geometries, let us also mention that there are several works related to inverse problems for quasi-linear elliptic equations (see for example \cite{2019arXiv190307034C,EPS14,Isakov01,2018arXiv180609586M,MR1376299,MR2053845,ZU}). It should be emphasized that the key idea in all of these results has been a linearization technique introduced by Isakov in \cite{MR1233645} in the context of semi-linear parabolic equations and developed further in \cite{MR1861472,MR1311909,MR1295934,MR1376299,MR2053845}. This linearization technique together with the uniqueness results for the Calder\'{o}n conjecture in Euclidean domains leads to the unique recovery of the non-linear terms. 

The main novelty of this paper is to extend uniqueness results for non-linear elliptic equations to a wider class of Riemannian manifolds, known as conformally transversally anisotropic manifolds (see Definition \ref{CTA}). We consider local solutions about the trivial solution, but our proof is based on a multiple-fold linearization technique that differs from most of the previously mentioned works. As already discussed, the results in the Euclidean setting rely on the fact that uniqueness holds for the linearized inverse problem. This is no longer the case when $\M$ is assumed to be conformally transversally anisotropic. Indeed, uniqueness results for the linearized problem rely on injectivity of the geodesic ray transform on $(M,g)$ that is known to be true under strong geometric assumptions such as simplicity of the transversal manifold $(M,g)$ or existence of a strictly convex foliation \cite{UV17}. The strength of our results lies in removing such strong geometric assumptions. On the other hand, contrary to the Euclidean cases, the results here assume analyticity of $V(x,u)$ with respect to $u$. 

The multiple-fold linearization technique in this paper is inspired by the study of similar types of non-linear problems for hyperbolic equations that was developed by Kurylev, Lassas and Uhlmann in \cite{2014arXiv1406.4776K, KLU18} in the context of Einstein scalar field equations and used in subsequent works in the context of semi-linear wave equations (see for example \cite{CLOP,FO19,LUW17,Lassas2018,WZ}). However, these works are based on the study of propagation of singularities for linear wave equations and the non-linear interactions of these singularities, making it difficult to apply them to an elliptic problem. Another key difference with all previous works in the hyperbolic setting is that we study non-linear interaction of localized solutions that correspond to a single geodesic. This will lead us to the study of a weighted transform along geodesics that we call the Jacobi ray transforms of the first and second kind. We show that it is possible to invert these transforms along a single geodesic (see Propositions~\ref{jacobi ray inversion 1}--\ref{jacobi ray inversion}).

We conclude this introductory section by remarking that while writing this
paper we became aware of an upcoming preprint by Matti Lassas, Tony Liimatainen,
Yi-Hsuan Lin and Mikko Salo, which simultaneously and independently proves
a similar result. We agreed to post our respective preprints to arXiv
at the same time. See \cite{LLLS} for their preprint.
\subsection{Outline}
This paper is organized as follows. Section~\ref{prelim} is concerned with some preliminary discussions. We show that the Dirichlet-to-Neumann map $\Lambda_{V}$ (see \eqref{DNsemi}) is well-defined. We also discuss the linearization method for solutions to equation \eqref{pf1} near the trivial solution, in particular showing the appearance of what we call a nonlinear interaction. The rest of Section~\ref{prelim} is concerned with some lemmas and notations that will be needed throughout the paper. In Section~\ref{section_jacobi} we define the Jacobi weighted transform of the first and second kind along a fixed geodesic, and subsequently prove injectivity results for these two transforms, see Propositions~\ref{jacobi ray inversion 1}--\ref{jacobi ray inversion}. Section~\ref{cgo_section} starts with a review of the well-known Gaussian quasi modes for the linearized operator following \cite{MR3562352}. In the remainder of this section we use this construction, together with a Carleman estimate to produce a family of complex geometric optic solutions for the linearized operator. In Section~\ref{section_uniqueness}, we use an induction argument, based on the application of our linearization technique near the complex geometric optic solutions, to complete the proof of Theorem~\ref{t1}. Section~\ref{section_t2} is concerned with the proof of Theorem~\ref{t2}.

\section{Preliminaries}
\label{prelim}
\subsection{Direct problem}
\label{direct}
In this section we prove the following proposition for the direct problem \eqref{pf1}.
\begin{proposition}
\label{direct_problem}
$\exists r_0,r_1>0$ depending on $(\M,\g)$, such that equation \eqref{pf1} admits a unique solution $u \in B^{\alpha}_{r_1}(\M)$. Moreover, there holds
$$ \|u\|_{\CI^{2,\alpha}(\M)}\leq C \|f\|_{\CI^{2,\alpha}(\pd \M)},\quad \forall f \in B^{\alpha}_{r_0}(\pd \M),$$
for some constant $C$ that depends on $(\M,\g)$, $r_0$ and $r_1$.
\end{proposition}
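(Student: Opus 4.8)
The plan is to reformulate the Dirichlet problem \eqref{pf1} as a fixed point equation governed by the linearized operator $L_1 := -\Delta_{\g} + V_1$ and to solve it by the Banach fixed point theorem in $\CI^{2,\alpha}(\M)$. First I would split $V(x,z) = V_1(x)\,z + W(x,z)$ with $W(x,z) := \sum_{k \ge 2} V_k(x)\,z^k/k!$, so that $u$ solves \eqref{pf1} exactly when $L_1 u = -W(\cdot,u)$ in $\M^{\inter}$ with $u|_{\pd\M} = f$. By hypothesis (iv), $0$ is not a Dirichlet eigenvalue of $L_1$, so elliptic Schauder theory (reduce to zero boundary data by extending $f$, then use that $L_1$ is Fredholm of index zero and injective, hence bijective, together with the Schauder estimate and the open mapping theorem) provides a bounded linear solution operator $G : \CI^{\alpha}(\M) \times \CI^{2,\alpha}(\pd\M) \to \CI^{2,\alpha}(\M)$, $(h,f) \mapsto w$, where $L_1 w = h$, $w|_{\pd\M} = f$, with $\norm{G(h,f)}_{\CI^{2,\alpha}(\M)} \le C_G(\norm{h}_{\CI^{\alpha}(\M)} + \norm{f}_{\CI^{2,\alpha}(\pd\M)})$. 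Then \eqref{pf1} becomes $u = \Phi_f(u) := G(-W(\cdot,u), f)$.

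Next I would establish the mapping and Lipschitz properties of the superposition (Nemytskii) operator $u \mapsto W(\cdot, u)$ on a small ball $B^{\alpha}_{r_1}(\M)$. Using that $\CI^{\alpha}(\M)$ and $\CI^{2,\alpha}(\M)$ are Banach algebras on the compact manifold $\M$, one has $\norm{u^k}_{\CI^{\alpha}(\M)} \le C^{k-1} \norm{u}_{\CI^{\alpha}(\M)}^{k}$, and hypothesis (iii) --- via Cauchy estimates in the Banach space $\CI^{\alpha}(\M)$ applied to the analytic map $z \mapsto V(\cdot,z)$ --- guarantees a positive radius $\rho_0$ with $\sum_k \norm{V_k}_{\CI^{\alpha}(\M)}\,\rho^k/k! < \infty$ for $\rho < \rho_0$. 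Hence for $r_1$ small enough the series $\sum_{k\ge 2} V_k u^k/k!$ converges absolutely in $\CI^{\alpha}(\M)$, uniformly over $u \in B^{\alpha}_{r_1}(\M)$, and one gets the quadratic bound $\norm{W(\cdot,u)}_{\CI^{\alpha}(\M)} \le C_1 \norm{u}_{\CI^{2,\alpha}(\M)}^{2}$ together with the Lipschitz bound $\norm{W(\cdot,u_1) - W(\cdot,u_2)}_{\CI^{\alpha}(\M)} \le C_1 r_1 \norm{u_1 - u_2}_{\CI^{2,\alpha}(\M)}$, the latter coming from the identity $u_1^k - u_2^k = (u_1 - u_2)\sum_{j=0}^{k-1} u_1^j u_2^{k-1-j}$ and the observation that the resulting series is (up to a constant) the termwise derivative of the original one, whose sum is $O(r_1)$ on $B^{\alpha}_{r_1}(\M)$.

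Combining these, I would choose $r_1 > 0$ so small that $C_G C_1 r_1 \le 1/2$ and then $r_0 > 0$ so small that $C_G(C_1 r_1^2 + r_0) \le r_1$; this makes $\Phi_f$ a self-map of $B^{\alpha}_{r_1}(\M)$ and a contraction there for every $f \in B^{\alpha}_{r_0}(\pd\M)$, so the Banach fixed point theorem yields the unique solution $u \in B^{\alpha}_{r_1}(\M)$. Finally, the estimate \eqref{dirichletcont} comes out of the fixed point relation itself: $\norm{u}_{\CI^{2,\alpha}(\M)} = \norm{\Phi_f(u)}_{\CI^{2,\alpha}(\M)} \le C_G(C_1 r_1 \norm{u}_{\CI^{2,\alpha}(\M)} + \norm{f}_{\CI^{2,\alpha}(\pd\M)})$, and absorbing the first term on the right (using $C_G C_1 r_1 \le 1/2$) gives $\norm{u}_{\CI^{2,\alpha}(\M)} \le 2 C_G \norm{f}_{\CI^{2,\alpha}(\pd\M)}$.

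I expect the only genuine obstacle to be the analysis of the Nemytskii operator in the second paragraph: one must convert the abstract analyticity hypothesis (iii) into honest convergence of the Hölder-valued power series $\sum_k V_k u^k/k!$, uniformly on a small ball of $\CI^{2,\alpha}(\M)$, and extract from it both the quadratic smallness of $W(\cdot,u)$ and the Lipschitz estimate with small constant. Everything else --- linear solvability of $L_1$, the Schauder bounds, and the contraction argument --- is routine.
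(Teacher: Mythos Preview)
Your proposal is correct and follows essentially the same route as the paper: split off the linear part $V_1 z$, invoke Schauder theory and hypothesis (iv) to get a bounded solution operator for $-\Delta_\g + V_1$, and then run a Banach fixed point argument in $\CI^{2,\alpha}(\M)$ using quadratic smallness and a small-Lipschitz bound for the remainder $W(\cdot,u)$. The only cosmetic differences are that the paper writes $u=\mathcal G_{V_1}^D f+\tilde u$ and iterates on $\tilde u$ rather than on $u$, and derives the Nemytskii bounds (its Lemma~\ref{direct_1}) via the integral Taylor remainder in $z$ rather than via Cauchy estimates on the power series.
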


Let us define the Schr\"{o}dinger operator $\mathcal P_{V_1}=-\Delta_{\g}+V_1(x)$, and consider the linear equation
\bel{pf0}
\begin{aligned}
\begin{cases}
\mathcal P_{V_1}\, u=F,  &\forall x \in \M \\
u= f  &\forall x \in \pd \M
\end{cases}
    \end{aligned}
\ee
where $(f,F) \in \CI^{2,\alpha}(\pd \M)\times \CI^{\alpha}(\M)$. We introduce the solution operators $\mathcal G_{V_1}^D,\mathcal G_{V_1}^S$ so that the function
$\mathcal G_{V_1}^D f$ is the unique solution to \eqref{pf0} subject to $F \equiv 0$ and $\mathcal G_{V_1}^S F$ is the unique solution to \eqref{pf0} subject to $f \equiv 0$. There is a constant $\kappa>0$ (see for example \cite[Chapter 4.4]{MR1814364}) such that
\bel{greenfcn}
\|\mathcal G^D_{V_1}\|_{\CI^{2,\alpha}(\pd\M)\to\CI^{2,\alpha}(\M)} + \|\mathcal G^S_{V_1}\|_{\CI^{\alpha}(\M)\to\CI^{2,\alpha}(\M)} \leq \kappa.
\ee
Let us now define the function $\tilde{V}(x,z):= V(x,z)-V_1(x)z$. We have the following lemma.
\begin{lemma}
\label{direct_1}
Let $r \in (0,1)$. Given any $u_0,u_1 \in B^{\alpha}_r(\M)$, the following estimates hold.
\begin{itemize}
\item[(i)]{$\|\tilde{V}(x,u_0(x))\|_{\CI^{\alpha}(\M)}\leq \tilde{\kappa}\|u_0\|^2_{\CI^{\alpha}(\M)}$,}
\item[(ii)]{$\|\tilde{V}(x,u_1(x))-\tilde{V}(x,u_0(x))\|_{\CI^{\alpha}(\M)}\leq \tilde{\kappa}r\|u_1-u_0\|_{\CI^{\alpha}(\M)}$,}
\end{itemize}
where $\tilde{\kappa}>0$ is independent of $r$.
\end{lemma}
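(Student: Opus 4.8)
The plan is to exploit two structural facts. The first is that $\tilde{V}(x,z)=V(x,z)-V_1(x)z=\sum_{k\geq 2}V_k(x)\,z^k/k!$ vanishes to \emph{second} order in $z$; this is what will produce the quadratic factor $\|u_0\|^2$ in (i) and the factor $r$ in (ii). The second is that $\CI^{\alpha}(\M)$ is a Banach algebra: there is a constant $C_{\M}\geq 1$, depending only on $(\M,\g)$, with $\|fh\|_{\CI^{\alpha}(\M)}\leq C_{\M}\|f\|_{\CI^{\alpha}(\M)}\|h\|_{\CI^{\alpha}(\M)}$. To control the series that arise, I would invoke assumption (iii): since $z\mapsto V(\cdot,z)$ is holomorphic on all of $\C$ with values in $\CI^{\alpha}(\M)$ it is entire, so the scalar power series $\Phi(t):=\sum_{k\geq 1}\|V_k\|_{\CI^{\alpha}(\M)}\,t^k/k!$ converges for every $t\geq 0$ and defines an entire function, whose derivative $\Phi'(t)=\sum_{k\geq 1}\|V_k\|_{\CI^{\alpha}(\M)}\,t^{k-1}/(k-1)!$ is also finite for every $t$.

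With these in hand I would set $\tilde{\kappa}:=C_{\M}\bigl(\Phi(C_{\M})+\Phi'(C_{\M})\bigr)$, which manifestly does not depend on $r$. For $w\in B^{\alpha}_r(\M)$ one has $\|w\|_{\CI^{\alpha}(\M)}\leq\|w\|_{\CI^{2,\alpha}(\M)}\leq r<1$, and I would first record that the $\CI^{\alpha}(\M)$-valued series $\sum_{k\geq 2}V_k\,w^k/k!$ converges absolutely, since its $k$-th term has norm at most $C_{\M}^{k}\|V_k\|_{\CI^{\alpha}(\M)}\|w\|_{\CI^{\alpha}(\M)}^{k}/k!$, and that its sum agrees pointwise with $x\mapsto\tilde{V}(x,w(x))$ (for fixed $x$ the scalar series $\sum V_k(x)w(x)^k/k!$ converges because point evaluation is continuous on $\CI^{\alpha}(\M)$). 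Then (i) is immediate: for $k\geq 2$ one has $\|u_0\|_{\CI^{\alpha}(\M)}^{k}\leq\|u_0\|_{\CI^{\alpha}(\M)}^{2}$ because $\|u_0\|_{\CI^{\alpha}(\M)}<1$, so $\|\tilde{V}(\cdot,u_0(\cdot))\|_{\CI^{\alpha}(\M)}\leq\|u_0\|_{\CI^{\alpha}(\M)}^{2}\sum_{k\geq 2}C_{\M}^{k}\|V_k\|_{\CI^{\alpha}(\M)}/k!\leq\Phi(C_{\M})\,\|u_0\|_{\CI^{\alpha}(\M)}^{2}$. For (ii) I would apply the elementary identity $a^k-b^k=(a-b)\sum_{j=0}^{k-1}a^{j}b^{k-1-j}$ pointwise with $a=u_1(x)$, $b=u_0(x)$ to write $\tilde{V}(x,u_1(x))-\tilde{V}(x,u_0(x))=(u_1(x)-u_0(x))\,G(x)$, where $G:=\sum_{k\geq 2}\frac{V_k}{k!}\sum_{j=0}^{k-1}u_1^{j}u_0^{k-1-j}$; since the inner sum has $k$ terms, each of $\CI^{\alpha}(\M)$-norm at most $C_{\M}^{k-1}\|V_k\|_{\CI^{\alpha}(\M)}\,r^{k-1}$, and $r^{k-1}\leq r$ for $k\geq 2$, one gets $\|G\|_{\CI^{\alpha}(\M)}\leq\sum_{k\geq 2}\frac{(C_{\M}r)^{k-1}}{(k-1)!}\|V_k\|_{\CI^{\alpha}(\M)}\leq r\,\Phi'(C_{\M})$. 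One more application of the Banach-algebra inequality then yields (ii) with the same $\tilde{\kappa}$.

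The only genuinely non-routine point is the passage from hypothesis (iii) to the absolute convergence, in $\CI^{\alpha}(\M)$, of the substituted series $\sum V_k\,w^k/k!$ and of $G$: this is exactly where one must use that holomorphy of $z\mapsto V(\cdot,z)$ on all of $\C$ forces the Taylor coefficients $\|V_k\|_{\CI^{\alpha}(\M)}/k!$ to decay fast enough to absorb the geometric factor $C_{\M}^{k}$ coming from the algebra constant, i.e.\ to make $\Phi(C_{\M})$ and $\Phi'(C_{\M})$ finite. Once this is granted, the remainder is bookkeeping of exponential-type series, and the independence of $\tilde{\kappa}$ from $r$ is evident from its definition.
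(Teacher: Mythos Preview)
Your proof is correct, and it takes a somewhat different route from the paper's. The paper argues via the Taylor remainder in integral form: since $\tilde V(x,0)=\partial_z\tilde V(x,0)=0$, one writes
\[
\tilde V(x,u_k(x))=\tfrac12\int_C \partial_z^2\tilde V(x,z)\,(u_k(x)-z)\,dz
\]
along a path $C$ from $0$ to $u_k(x)$ (the integral understood as a $\CI^\alpha$-valued Riemann integral), and then bounds everything by $\sup_{|z|\le r}\|\partial_z^2\tilde V(\cdot,z)\|_{\CI^\alpha}$, which is finite and independent of $r\in(0,1)$; part (ii) is handled analogously. You instead substitute the power series $\tilde V(x,z)=\sum_{k\ge2}V_k(x)z^k/k!$ directly, use the Banach-algebra inequality termwise, and control the resulting constants through the entire function $\Phi(t)=\sum_k\|V_k\|_{\CI^\alpha}t^k/k!$ evaluated at the algebra constant $C_{\M}$. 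Your approach makes the dependence on the Taylor coefficients fully explicit and gives a clean, single constant $\tilde\kappa=C_{\M}(\Phi(C_{\M})+\Phi'(C_{\M}))$; the paper's approach is slightly more compact and would also work if $V$ were only $\CI^2$ in $z$ rather than analytic. Both arguments hinge on the same two ingredients you identified: second-order vanishing of $\tilde V$ at $z=0$ and the algebra property of $\CI^\alpha(\M)$.
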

\begin{proof}
First, observe that $\CI^{\alpha}(\M)$ is closed under multiplication and that there exists a constant $C>0$, depending on $(\M,\g)$, such that for any $v,w \in \CI^{\alpha}(\M)$ there holds
$$ \|vw\|_{\CI^{\alpha}(\M)}\leq C \|v\|_{\CI^{\alpha}(\M)}\|w\|_{\CI^{\alpha}(\M)}.$$
Now, using the fact that $\tilde{V}(x,0)=\pd_z \tilde{V}(x,0)=0$ we write
$$ \tilde{V}(x,u_k(x))= \frac{1}{2}\int_{C} \pd^2_{z}\tilde{V}(x,z) (u_k(x)-z)\,dz,\quad k=0,1,$$
where $C$ is a path connecting $0$ to $u(x)$ and the integral is in the sense of the $\CI^\alpha(\M)$ norm limits of the Riemann partial sums. Applying the $C^\alpha$ norm we deduce that
\[
\|\tilde{V}(x,u_k(x))\|_{\CI^{\alpha}(\M)}\leq \left(C_1\sup_{|z|\leq r} \|\pd^2_z \tilde{V}(x,z)\|_{\CI^\alpha(\M)}\right)\|u_k\|^2_{\CI^\alpha(\M)}.
\]
Similarly we have,
\[
\|\tilde{V}(x,u_1(x))-\tilde{V}(x,u_0(x))\|_{\CI^{\alpha}(\M)}\leq \left(C_2\sup_{|z|\leq r}\|\pd^2_z \tilde{V}(x,z)\|_{\CI^{\alpha}(\M)}\right)r\|u_1-u_0\|_{\CI^{\alpha}(\M)}.
\]
for some $C_1,C_2>0$. Finally, using smoothness of $V(x,z)$ with respect to $z$, we deduce that 
$$ \sup_{|z|\leq r}\|\pd^2_z \tilde{V}(x,z)\|_{\CI^{\alpha}(\M)}\leq C_3,$$
for some constant $C_3>0$ independent of $r$, since $0<r<1$. The claim follows immediately by combining the preceding three bounds.
\end{proof}
\begin{proof}[Proof of Proposition~\ref{direct_problem}]
We start by fixing 
$$r_0<\min\{\frac{1}{1+3\kappa},\frac{1}{4\tilde{\kappa}\kappa(1+\kappa^2)},\frac{1}{\tilde{\kappa}(2\kappa+1)},\frac{1}{\tilde{\kappa}(\kappa+1)^2}\},\quad r_1=(\kappa+1) r_0.$$ 
First we show existence of a solution $u \in B^{\alpha}_{r_1}(\M)$. Write $u=\mathcal G_{V_1}^D\,f+\tilde{u}$ and observe that there exists a one to one correspondence between $\CI^{2,\alpha}(\M)$ solutions to equation \eqref{pf1} and solutions to the integral equation
\bel{integral_pf}
\tilde{u}= -\mathcal G^S_{V_1} \left( \tilde{V}(x,\mathcal G_{V_1}^D\,f+\tilde{u})\right)=:T_f(\tilde{u}).
\ee
Next noting that $r_0<1$, we may apply Lemma~\ref{direct_1}, and this together with the bound \eqref{greenfcn} yields
\bel{T_f}\|T_f v\|_{\CI^{2,\alpha}(\M)} \leq 2 \tilde{\kappa} \kappa\left(\|v\|_{\CI^{\alpha}(\M)}^2+\|\mathcal G_{V_1}^D\,f\|^2_{\CI^{\alpha}(\M)} \right)\quad \forall v \in B_{r_0}^{\alpha}(\M).\ee
Applying \eqref{greenfcn} again and noting that $r_0<\frac{1}{4\kappa\tilde{\kappa}(\kappa^2+1)}$, we deduce that $T_f$ maps the closed set $\mathcal B^{\alpha}_{r_0}(\M)$ to itself. Additionally, one can verify in the same way that $T_f$ is a contraction mapping on $B^{\alpha}_{r_0}(\M)$. The Banach fixed point theorem applies and we conclude that there exists a solution $\tilde{u}\in B^{\alpha}_{r_0}(\M)$ to equation \eqref{integral_pf}. Observe subsequently that $u \in \CI^{2,\alpha}(\M)$ defined above solves \eqref{pf1}. Applying \eqref{T_f} we have
\[
\|\tilde{u}\|_{\CI^{2,\alpha}(\M)}\leq 2 \tilde{\kappa} \kappa\left(\|\tilde{u}\|_{\CI^{\alpha}(\M)}^2+\|\mathcal G_{V_1}^D\,f\|^2_{\CI^{\alpha}(\M)} \right)\leq \frac{1}{2} \|\tilde{u}\|_{\CI^{2,\alpha}(\M)}+2\tilde{\kappa}\kappa^3\|f\|_{\CI^{2,\alpha}(\pd \M)}^2,
\]
Thus yielding the continuity estimate $\|u\|_{\CI^{2,\alpha}(\M)} \leq (\kappa+1)\|f\|_{\CI^{2,\alpha}(\pd\M)}.$ This latter estimate also shows that $u \in B^{\alpha}_{r_1}(\M)$. 

Next we show uniqueness. Suppose for contrary that $u_1,u_2 \in B_{r_1}^{\alpha}(\M)$ with $u_1\neq u_2$ solving equation \eqref{pf1}. Define $\tilde{u}_k$ for $k=1,2$ as above and note that $\tilde{u}_k=T_f \tilde{u}_k$.  Since $r_1<1$, Lemma~\ref{direct_1} applies to obtain
$$\|\tilde{u}_k\|_{\CI^{2,\alpha}(\M)}\leq \|u_k\|_{\CI^{2,\alpha}(\M)}+\|\mathcal G_{V_1}^Df\|_{\CI^{2,\alpha}(\M)}\leq r_2,$$
where $r_2=(1+2\kappa)r_0$. Finally, since $r_0<\frac{1}{1+3\kappa}$, we can apply Lemma~\ref{direct_1} again to deduce that $T_f$ is a contraction mapping on the set $B_{r_2}^{\alpha}(\M)$. Therefore 
$$\|\tilde{u}_1-\tilde{u}_2\|_{\CI^{2,\alpha}(\M)}=\|T_f\tilde{u}_1-T_f\tilde{u}_2\|_{\CI^{2,\alpha}(\M)}\leq \tilde{\kappa}(2\kappa+1) r_0\|\tilde{u}_1-\tilde{u}_0\|_{\CI^{2,\alpha}(\M)}<\|\tilde{u}_1-\tilde{u}_0\|_{\CI^{2,\alpha}(\M)},$$
which is a contradiction.
\end{proof}
\subsection{Multiple-fold linearization method}
\label{section_lin}
We have established that the forward problem \eqref{pf1} is well-posed, and therefore also the DN map \eqref{DNsemi} is well-defined. In order to prove Theorems~\ref{t1}--\ref{t2}, we will work with families of Dirichlet datum $f$ that will be arbitrarily small with respect to the $\CI^{2,\alpha}(\pd\M)$ norm, and are therefore only interested in the behavior of $\Lambda_V$ near $f\equiv 0$. To set this idea in motion, let $m \in \N$ and consider a parameter $\varepsilon=(\varepsilon_1,\ldots,\varepsilon_m) \in \C^m$ and a family of Dirichlet datum \bel{lin_data}f_\varepsilon=\sum_{k=1}^m \varepsilon_k f_k,\ee
where $f_k \in \CI^{2,\alpha}(\pd\M)$ are fixed. Clearly, for all $|\varepsilon|$ sufficiently small, there exists a unique solution $u_\varepsilon$ to equation \eqref{pf1} subject to Dirichlet data $f_\varepsilon$.

\subsubsection{Analytic dependence on $\varepsilon$}
Next, we prove that $u_\varepsilon$ is analytic in a neighborhood of $\varepsilon=0$, that is to say $u_\varepsilon$ admits a power series representation with respect to the parameter $\varepsilon$ in the $\mathcal C^{2,\alpha}(\M)$ topology. It suffices to show that $u_\varepsilon$ is analytic with respect to each $\varepsilon_k$ for $k=1,\ldots,m$ (see for example \cite[Theorem 1.2.25]{VS}). To this end we prove that given a fixed family $\{f_k\}_{k=1}^m$ and any fixed $\varepsilon$ in a sufficiently small neighborhood of the origin in $\C^m$, the limit 
\bel{lim_comp}\lim_{h\to0}\frac{u_{\varepsilon+he_l}-u_{\varepsilon}}{h}\ee 
exists in the $\CI^{2,\alpha}(\M)$ sense, where $h\in \C$ and $e_l$ denotes the $l^{\text{th}}$ unit vector in $\C^m$ with $l=1,\ldots,m$. 

As a first step, we proceed to prove that for all $|\varepsilon|$ small enough and all $|h|<|\varepsilon|$ there holds
\bel{c1bound}
\|u_{\varepsilon+he_l}-u_\varepsilon\|_{\mathcal C^{2,\alpha}(\M)}\leq C |h|,
\ee
where $C>0$ is independent of $\varepsilon$ and $h$. We begin by observing that for $|\varepsilon|$ small and all $|h|<|\varepsilon|$, we can apply Proposition~\ref{direct_problem} to obtain the estimate 
\bel{epsilon_bound_lem}   
\|u_\varepsilon\|_{\CI^{2,\alpha}(\M)} + \|u_{\varepsilon+he_l}\|_{\CI^{2,\alpha}(\M)} \leq C |\varepsilon|,
\ee
for some constant $C>0$ independent of $\varepsilon$ and $h$. Next, we use equation \eqref{integral_pf} to write
\begin{align*}
u_{\varepsilon}&= \mathcal G^D_{V_1} f_\varepsilon -\mathcal G^S_{V_1}(\tilde{V}(x,u_\varepsilon)),\\
u_{\varepsilon+he_l}&=\mathcal G^D_{V_1} f_{\varepsilon+he_l} -\mathcal G^S_{V_1}(\tilde{V}(x,u_{\varepsilon+he_l})).
\end{align*}
Subtracting these two equations and applying (ii) in Lemma~\ref{direct_1} together with \eqref{epsilon_bound_lem}, it follows that 
$$\|u_{\varepsilon+he_l}-u_\varepsilon\|_{\CI^{2,\alpha}(\M)} \leq C \left( |h| \|f_l\|_{\CI^{2,\alpha}(\p\M)} + |\varepsilon| \|u_{\varepsilon+he_l}-u_\varepsilon\|_{\CI^{2,\alpha}(\M)}\right),$$
where $C>0$ is independent of $h$ and $\varepsilon$. Finally, the bound \eqref{c1bound} follows from this estimate for $\varepsilon$ sufficiently small.

Next, we proceed to show the main claim that the limit in \eqref{lim_comp} exists. Since $0$ is not a Dirichlet eigenvalue for $\mathcal P_{V_1}$, it follows that the same is true for the operator $\mathcal P_{\pd_z V(\cdot,u_\varepsilon)}$, given that $|\varepsilon|$ is sufficiently small. We subsequently define $\mathcal G^D_\varepsilon$ and $\mathcal G^S_\varepsilon$ analogously to $\mathcal G^D_{V_1}$ and $\mathcal G^S_{V_1}$, corresponding to equation \eqref{pf0} with potential $\pd_z V(\cdot,u_\varepsilon)$ in place of $V_1$. We write
$$\tilde{u}_{h}:=u_{\varepsilon+he_l}-u_{\varepsilon}-h\,\mathcal G_\varepsilon^D f_{l}.$$
The function $\tilde{u}_{h}$ satisfies the equation
$$ \tilde{u}_h=-\mathcal G_{\varepsilon}^S\left(\tilde{V}_{\varepsilon,h}(x)\right)$$
where 
$$\tilde{V}_{\varepsilon,h}(x)=V(x,u_{\varepsilon+he_l}(x))-V(x,u_\varepsilon(x))-\pd_z V(x,u_\varepsilon(x))(u_{\varepsilon+he_l}-u_\varepsilon).$$
Using the smoothness of $V(x,z)$ with respect to $z$ and analogously to Lemma~\ref{direct_1} we deduce that there exists $C_\varepsilon>0$ such that
\bel{est_V} \|\tilde{V}_{\varepsilon,h}(x)\|_{\mathcal C^{\alpha}(\M)}\leq C_\varepsilon \|u_{\varepsilon+he_l}-u_\varepsilon\|^2_{\mathcal C^{\alpha}(\M)}.\ee
Thus, for $h$ sufficiently small, by using the bounds \eqref{greenfcn}, \eqref{c1bound} and \eqref{est_V}, we obtain 
$$ \|\tilde{u}_h\|_{\CI^{2,\alpha}(\M)} \leq C_\varepsilon |h|^2$$
for some $C>0$ independent of $h$. Hence,
\bel{anal_epsilon}\lim_{h\to0}\frac{u_{\varepsilon+he_l}-u_{\varepsilon}}{h}=\mathcal G_\varepsilon^D f_l\ee
holds in the $\CI^{2,\alpha}$ sense, proving that $u_\varepsilon$ depends analytically in each of its parameters $\varepsilon_l$ in a neighborhood of $\varepsilon=0$ in $\C^m$. 
\\
\subsubsection{Non-linear interaction of linearized solutions}
Let us now use this linearization technique to first show that $\Lambda_{V}$ determines the Dirichlet to Neumann map $\Lambda^{\text{lin}}_{V_1}$ associated to the linear operator $\mathcal P_{V_1}$. Of course this is a somewhat redundant argument as $V_1$ will be assumed to be known for us, but nevertheless this simple case will shed some light on the higher order linearization arguments. Let $m=1$ so that $\varepsilon \in \C$ and write $f_\varepsilon=\varepsilon f$ for some $f \in \CI^{2,\alpha}(\pd\M)$. By \eqref{anal_epsilon} there holds $\pd_\varepsilon u_\varepsilon|_{\varepsilon=0}=\mathcal G_{V_1}^D f$. Moreover, since $u_\varepsilon \in \CI^{2,\alpha}(\M)$ and since $\pd_\nu u_\varepsilon\,|_{\pd \M}$ is determined through the map $\Lambda_{V}$, we can simply write 
$$ \pd_\nu \mathcal G_{V_1}^D\,f=\pd_\nu \pd_{\varepsilon}u_\varepsilon|_{\varepsilon=0}=\pd_\varepsilon\,\Lambda_{V}f_\varepsilon\,|_{\varepsilon=0},$$
which shows that $\Lambda^{\text{lin}}_{V_1}f=\pd_\varepsilon \Lambda_{V}(\varepsilon f)\,|_{\varepsilon=0}$.

We can also use this linearization technique to identify interactions for solutions to the linearized equation $\mathcal P_{V_1}u=0$. Indeed, let us consider $\varepsilon \in \C^m$ with $m \geq 2$ and $\{f_k\}_{k=1}^m\subset \CI^{2,\alpha}(\pd \M)$. Since $u_\varepsilon$ solves equation \eqref{pf1} with Dirichlet data $f_\varepsilon$ given by \eqref{lin_data} and since the dependence on $\varepsilon$ is analytic, it follows that given any multi-index $\beta \in \{0,1,\ldots\}^m$ with $|\beta|=\beta_1+\ldots+\beta_m\geq 1$, the function $\p_{\varepsilon}^{\beta} u_\varepsilon|_{\varepsilon=0}$ solves 
$$ \mathcal P_{V_1}(\p_{\varepsilon}^{\beta} u_\varepsilon|_{\varepsilon=0})=F_{\beta}$$ 
with homogeneous Dirichlet boundary conditions, where $F_\beta$ depends on $V_1,\ldots,V_{|\beta|}$ and $\p^{\beta'}_\varepsilon u_\varepsilon|_{\varepsilon=0}$ with $|\beta'|=1,\ldots,|\beta|-1$. Using a simple induction argument it follows that $\p^\beta_\varepsilon u_\varepsilon|_{\varepsilon=0}$ only depends on $V_1,\ldots, V_{|\beta|}$ and $f_1,\ldots,f_m$.

We now consider a particular term in the power series expansion of $u_\varepsilon$ near $\varepsilon=0$ associated to the multi-index $\beta=(1,1,\ldots,1)$ and define
$$L_{f_1,\ldots,f_m}:=-\frac{\pd^{m}}{\pd \varepsilon_1\ldots\pd \varepsilon_m} u_\varepsilon \,|_{\varepsilon=0}.$$
It follows that the function $L_{f_1\ldots f_m}$ satisfies the equation
\bel{eq_lin}
\begin{aligned}
\begin{cases}
\mathcal P_{V_1}\,L_{f_1\ldots f_m}\,=V_m\,\prod_{k=1}^m \mathcal G_{V_1}^D f_k + H_{f_1,\ldots,f_m}, 
&\forall x \in \M
\\
L_{f_1\ldots f_m}= 0
&\forall x \in \pd \M
\end{cases}
    \end{aligned}
\ee
where $H_{f_1,\ldots,f_m}\in \mathcal C^{2,\alpha}(\M)$ is a function that only depends on $V_1,\ldots,V_{m-1}$ and $\p_\varepsilon^{\beta'} u_\varepsilon|_{\varepsilon=0}$ with $|\beta'|=1,\ldots,m-1$. Using the argument in the previous paragraph, it follows that $H_{f_1,\ldots,f_m}$ only depends on $V_1,\ldots,V_{m-1}$ and $f_1,\ldots,f_{m}$. In the particular case $m=2$, we have $H_{f_1,f_2}\equiv 0$. Finally, let us emphasize that since the Dirichlet-to-Neumann map, $\Lambda_V$, determines $\pd_\nu u_\varepsilon\,|_{\pd \M}$, it will also uniquely determine the values $\pd_\nu L_{f_1,\ldots,f_m}\,|_{\pd \M}$.

\subsection{Reduction to the case $c \equiv 1$}
\label{c_reduction}
This subsection is concerned with showing that one can without any loss in generality consider the case where $c(x^0,x')\equiv 1$ on $\M$. Let us define $\hat{\g}=(dx^0)^2+g(x)$ so that $\g=c\hat{\g}$. Using the transformation law of the Laplace-Beltrami operator under conformal rescalings of the metric, we write
\bel{conformal} c^{\frac{n+2}{4}}(-\Delta_{\g}u+V(x,u))=-\Delta_{\hat{\g}}v+ \hat{V}(x,v),\ee
where $v=c^{\frac{n-2}{4}}u$ and $\hat{V}(x,v)=c^{\frac{n+2}{4}}V(x,c^{-\frac{n-2}{4}}v)-(c^{\frac{n-2}{4}}\Delta_{\g}\,c^{-\frac{n-2}{4}})v$. It can be easily checked that conditions (i)$-$(iv) also hold for the function $\hat{V}(x,z)$. Moreover, if $V_1$ is smooth then so is the function $\hat{V}_1$.

Let $r'_0,r'_1>0$ and consider solutions $v \in B^{\alpha}_{r'_1}(\M)$ to equation 
\bel{pf2}
\begin{aligned}
\begin{cases}
-\Delta_{\hat{\g}}v+\hat{V}(x,v)=0, 
&\forall x \in \M
\\
v= f \in B_{r_0'}^{\alpha}(\pd\M)
&\forall x \in \pd \M
\end{cases}
    \end{aligned}
\ee 
It can be easily verified that for $(r_0',r_1')$ small depending on $(r_0,r_1)$ and $\|c\|_{\CI^3(\M)}$, equation~\eqref{pf2} has a unique solution given by $v=c^{\frac{n-2}{4}}u$ where $u$ is the unique solution to equation \eqref{pf1} subject to Dirichlet data $c^{-\frac{n-2}{4}}f$. We can therefore uniquely determine the DN map $\Lambda_{\hat{V}}$ for equation~\eqref{pf2} from the DN map $\Lambda_V$ for equation \eqref{pf1} and henceforth consider the problem of determining $\hat{V}$ from $\Lambda_{\hat{V}}$. Finally note that once uniqueness is proved for $\hat{V}$, we can immediately deduce uniqueness for $V$. Thus, without loss of generality we will make the standing assumption throughout the rest of the paper that $c \equiv 1$.

\subsection{A Carleman estimate}
\label{section_semiclassical}
This section is concerned with providing a right inverse for the following differential operator 
\[
\mathcal L_{\lambda}\cdot := e^{-\lambda x^0}\left(-\Delta_{\g}+V_1\right)(e^{\lambda x^0}\cdot),
\]
where $\lambda \in \R$ and $|\lambda|$ is sufficiently large, with suitable continuity estimates in $H^k(\M)$ norm for any fixed $k \in \N$ (see Proposition~\ref{semiclassical}). 
 
We start by introducing some notation. Choose an arbitrarily small auxiliary extension of the manifold $M$ into a smooth closed manifold $\hat{M}$ without boundary, and smoothly extend the metric $g(x')$ to $\hat{M}$. We also extend $V_1$ smoothly to $\hat{T}=I\times \hat{M}$ so that $V_1 \in \CI^{\infty}_c(\hat{T})$. Here, $I$ is the interval in Definition~\ref{CTA}. Next, for any $m \in \Z$, let $E$ be a bounded linear Sobolev extension operator $E:H^m(\M)\to H^m(\hat{T})$ and denote by $\{\psi_l\}_{l\in\N}$, the set of orthonormal eigenfunctions for the Laplace operator on $(\hat{M},g)$, so that $-\Delta_g \psi_l = \mu_l \psi_l$ with $\{\mu_l\}_{l\in \N}$ denoting the eigenvalues.

We have the following proposition.

\begin{proposition}
\label{semiclassical}
Let $k\in \N$ and suppose that $(\M,\g)$ is a CTA manifold as above and that $V_1 \in \CI^{\infty}(\M)$. Then there exists $\lambda_0>0$, depending on $(\M,\g)$, $V_1$ and $k$, such that for all $|\lambda|>\lambda_0$ with $\lambda^2 \notin \{\mu_n\}_{n\in\N}$,  the equation 
\bel{eq} \mathcal L_{\lambda} r = f, \quad f \in H^{k}(\M),\ee
admits some solution $u \in H^{k}(\M)$, satisfying the estimate
$$\|r\|_{H^m(\M)}\leq C \lambda^{-1} \|f\|_{H^m(\M)} \quad \text{for $m=0,\ldots,k$},$$
where the constant $C>0$ is independent of $\lambda$.
\end{proposition}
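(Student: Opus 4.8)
The plan is to construct the right inverse by separating variables in the transversal direction and solving a family of one-dimensional ODEs in the $x^0$ variable, following the strategy of \cite{DosSantosFerreira2009} and \cite{MR3562352}. Since $c\equiv 1$ by the reduction in Subsection~\ref{c_reduction}, the operator $-\Delta_{\g}$ is $-\p_0^2-\Delta_g$ on $\hat T=I\times\hat M$, and conjugation by $e^{\lambda x^0}$ gives $\mathcal L_\lambda\cdot=-\p_0^2\cdot-2\lambda\p_0\cdot-\lambda^2\cdot-\Delta_g\cdot+V_1\cdot$. First I would treat the principal, constant-coefficient part $\mathcal L_\lambda^0\cdot:=-\p_0^2\cdot-2\lambda\p_0\cdot-\lambda^2\cdot-\Delta_g\cdot$, and then absorb the zeroth-order perturbation $V_1$ by a Neumann series. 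For $\mathcal L_\lambda^0$, extend $f$ to $\hat T$ via $E$, expand $Ef(x^0,x')=\sum_l \hat f_l(x^0)\psi_l(x')$, and for each $l$ solve the scalar ODE $-\ddot r_l-2\lambda\dot r_l-(\lambda^2-\mu_l)r_l=\hat f_l$ on $I$. The characteristic roots are $-\lambda\pm\sqrt{\mu_l}$ (real when $\mu_l\ge 0$), so one obtains an explicit Green's kernel; the key analytic point is that for $|\lambda|$ large and $\lambda^2\notin\{\mu_l\}$ one gets the uniform gain $\|r_l\|_{L^2(I)}\le C\lambda^{-1}\|\hat f_l\|_{L^2(I)}$ with $C$ independent of $l$ and $\lambda$ (this is where $\lambda^2\notin\{\mu_l\}$ is used, to avoid resonance when $\mu_l\approx\lambda^2$). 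Summing in $l$ via Parseval gives the $L^2(\hat T)$ bound, and restricting to $\M$ gives the $m=0$ estimate.

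To obtain the higher-order estimates $m=1,\ldots,k$, I would commute $\mathcal L_\lambda^0$ with tangential derivatives: derivatives in $x'$ commute with the $\p_0$-operators and only reshuffle the spectral weights $\mu_l$, while $\p_0$ derivatives are handled by differentiating the ODE and using the equation itself to trade $\p_0^2 r$ for $f$ plus lower-order terms in $r$ (each such trade costs at most a bounded factor, not a power of $\lambda$). Concretely one shows $\|r\|_{H^{m+1}}\lesssim \|f\|_{H^m}+\lambda\|r\|_{H^m}+\ldots$, and then combines with the $\lambda^{-1}$ gain already proved at each lower level to close an induction giving $\|r\|_{H^m(\hat T)}\le C\lambda^{-1}\|f\|_{H^m(\hat T)}$ for $m=0,\ldots,k$. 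One has to be a little careful near the endpoints of $I$ because of the Sobolev extension, but since $M^{\inter}$ is compactly contained in $\hat M$ and $V_1\in \CI^\infty_c(\hat T)$, interior elliptic regularity on $\hat T$ together with the explicit ODE formulas suffices; boundedness of $E:H^m(\M)\to H^m(\hat T)$ absorbs the extension cost into $C$.

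Finally, to pass from $\mathcal L_\lambda^0$ to $\mathcal L_\lambda=\mathcal L_\lambda^0+V_1$, let $R_\lambda^0$ denote the right inverse just constructed (as an operator $H^m(\M)\to H^m(\M)$ for $m=0,\ldots,k$, bounded by $C\lambda^{-1}$). Then $\mathcal L_\lambda R_\lambda^0 = \mathrm{Id}+V_1 R_\lambda^0$, and since multiplication by $V_1\in\CI^\infty(\M)$ is bounded on $H^m(\M)$ (uniformly in $\lambda$), we have $\|V_1 R_\lambda^0\|_{H^m\to H^m}\le C\|V_1\|_{\CI^k}\lambda^{-1}<\tfrac12$ once $|\lambda|>\lambda_0$. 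Hence $\mathrm{Id}+V_1R_\lambda^0$ is invertible by Neumann series and $R_\lambda:=R_\lambda^0(\mathrm{Id}+V_1R_\lambda^0)^{-1}$ is the desired right inverse, still satisfying $\|R_\lambda\|_{H^m\to H^m}\le C\lambda^{-1}$. Setting $r=R_\lambda f$ proves the proposition. The main obstacle I anticipate is the uniform-in-$l$ control of the one-dimensional Green's kernel in the near-resonant regime $\mu_l\approx\lambda^2$: one must exploit the hypothesis $\lambda^2\notin\{\mu_l\}$ quantitatively (or choose $\lambda$ in the large gaps of the spectrum guaranteed by Weyl asymptotics) to keep the constant $C$ independent of $\lambda$ while still extracting the full $\lambda^{-1}$ decay.
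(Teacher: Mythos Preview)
Your overall architecture matches the paper's: separation of variables via the transversal eigenbasis, a one-dimensional problem in $x^0$ for each mode, and the $V_1$ perturbation absorbed by a Neumann series. The handling of higher $H^m$ regularity differs in detail---you propose commuting and inducting, whereas the paper computes $\|\sum_l(\partial_{x^0}^m R_l)\mu_l^{(p-m)/2}\psi_l\|_{L^2}$ directly from the explicit spectral/Fourier formula---but both routes are viable.

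The genuine gap is exactly the one you flag at the end: the near-resonant modes $\mu_l\approx\lambda^2$. Merely assuming $\lambda^2\notin\{\mu_l\}$ gives no \emph{quantitative} lower bound on $|\lambda-\sqrt{\mu_l}|$, so a Green's-kernel argument on $I$ alone will not produce a constant $C$ independent of $\lambda$. The paper's resolution is to (i) extend $f$ to a compactly supported function on $\R\times\hat M$ (not just $I\times\hat M$), (ii) factor the $x^0$-symbol as a product of first-order symbols and invert each factor on the whole line via $S_a h=\mathscr F^{-1}\bigl((i\xi+a)^{-1}\mathscr F h\bigr)$, and (iii) exploit two distinct bounds: $\|S_a\|_{H^m\to H^m}\le C|a|^{-1}$ for $|a|>1$, and the \emph{weighted} bound $\|S_a\|_{H^m_\delta\to H^m_{-\delta}}\le C$ valid for every $a\neq 0$ with $C$ independent of $a$. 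The factor $S_{\lambda+\sqrt{\mu_l}}$ always gains $\lambda^{-1}$ since $\lambda+\sqrt{\mu_l}\ge\lambda$, while the potentially resonant factor $S_{\lambda-\sqrt{\mu_l}}$ is controlled uniformly by the weighted estimate; the weights are harmless because $Ef$ has compact support and one restricts to the compact set $\M$ at the end. Without this weighted/limiting-absorption ingredient your argument is incomplete at precisely the step where the $\lambda^{-1}$ gain must survive uniformly in $l$.
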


Let us remark that in the case $k=0$, this is well-known, see for instance \cite[Proposition 4.4]{DosSantosFerreira2009} and \cite[Theorem 4.1]{Salo_notes}. We will present the proof here, as we need existence results with control on the $H^k(\M)$ norm with $k$ large. The proof here will be based on first extending to the infinite cylinder $\hat{T}$ and then applying Fourier mode analysis with respect to the tranversal manifold $(M,g)$. This is similar to the case $k=0$ as presented in \cite[Chapter 4]{Salo_notes}. Let us remark that it is also possible to use the Carleman estimates for the adjoint operator $\mathcal L_{-\lambda}$ shifted to negative Sobolev spaces (see for example \cite[Lemma 4.3]{DosSantosFerreira2009} and \cite[Section 4]{KSU07}) and a standard duality argument to obtain similar right inverses. However, the estimates obtained using this approach will be in semi-classical norms. The estimates that we obtain by using the Fourier analysis approach are slightly stronger due to the fact that the operator $\mathcal L_\lambda$ has constant coefficients with respect to the $x^0$ variable. 

\begin{proof}[Proof of Proposition~\ref{semiclassical}]
We only provide the proof for the case $V_1 \equiv 0$. For the case that $V_1$ is smoothly supported in $\hat{T}$, the proof here together with the exact arguments as in \cite[Theorem 4.1]{Salo_notes} yields the result. We begin by introducing an operator $S_a$ defined for any non-zero  $a \in \R$ , and any $h \in H^k(\R)$, as follows.
$$ (S_a h)(x)= \mathscr F^{-1}\left(\frac{(\mathscr F h)(\xi)}{i\xi+a}\right),$$
where $\mathscr F$ denotes the Fourier transform on $\R$. Using similar arguments as in \cite[Proposition 4.4]{Salo_notes}, we have that for all $m=0,1,\ldots,k$ and all $\delta>\frac{1}{2}$:
\bel{ode} 
\begin{aligned}
\|S_a h\|_{H^m(\R)}&\leq C a^{-1} \|h\|_{H^m(\R)},\quad \forall |a|>1.\\
\|S_a h\|_{H^m_{-\delta}(\R)}&\leq C \|h\|_{H^m_{\delta}(\R)}\quad \forall a \neq 0,
\end{aligned}
\ee
with $C$ independent of the parameter $a$ and $\|h\|^2_{H^m_\delta(\R)}:=\sum_{j=0}^m \|(1+|x|^2)^{\frac{\delta}{2}}\pd_x^k\,h\|^2_{L^2(\R)}$.

Let $F:=Ef$ with $F$ compactly supported in $\hat{T}$. We begin by writing $F$ in terms of the eigenfunctions of $\hat{M}$ as follows
$$ F(x^0,x')=\sum_{l \in \N} F_l(x^0) \psi_l(x').$$
In \cite[Proposition 4.6]{Salo_notes}, it was proved that the function
$$R(x^0,x'):=\sum_{l \in \N} R_l(x^0) \psi_l(x'),$$
with
$$ R_l=S_{\lambda+\sqrt{\mu_l}}S_{\lambda-\sqrt{\mu_l}}\,F_l$$
solves the equation \eqref{eq} on the larger set $\hat{T}$ and satisfies the estimate
$$\|R\|_{H^2_{-\delta}(\hat{T})}\lesssim \lambda \|F\|_{L_\delta^2(\hat{T})},$$
where we are using the notation
$$ \|\cdot\|^2_{H^m_{\delta}(\hat{T})}:=\sum_{j=0}^m \|(1+|x^0|^2)^{\frac{\delta}{2}}D^{m}\,\cdot\|^2_{L^2(\hat{T})}.$$
Now defining $r:=\mathbb I_{\M} R$, with $\mathbb I_{\M}$ denoting the characteristic function of $\M$, it is clear that $r \in H^2(\M)$ solves equation \eqref{eq} on $\M$. 

Let us proceed to prove the claimed bound in the statement of the proposition. We start by noting that given any $p=0,\ldots,k$ and $m=0,\ldots, p$, there holds
\begin{multline*}
\|\sum_{l \in \N} (\pd_{x^0}^{m} R_l) \mu_l^{\frac{p-m}{2}} \psi_l\|^2_{L^2_{-\delta}(\hat{T})}=\sum_{l \in \N} \mu_l^{p-m}\|S_{-\lambda+\sqrt{\mu_l}}S_{\lambda+\sqrt{\mu_l}}\,\pd^m_{x^0}F_l\|^2_{L_{-\delta}^2(\hat{T})}\\
\lesssim \lambda^{-2}\sum_{l \in \N} \mu_l^{p-m}\|\pd^m_{x^0}F_l\|^2_{L^2(\hat{T})},
\end{multline*}
where in the last step, we have used the bound \eqref{ode}, the fact that $|\lambda - \sqrt{\mu_n}|>0$ and that $F$ is compactly supported in $\hat{T}$. Observing that 
$$\pd^m_{x^0}F_l(x^0)=\int_{\hat{M}}\pd^m_{x^0} F(x^0,x')\psi_l(x')\,dV_{g},$$
together with the fact that $\hat{M}$ is closed, we deduce that
$$\|\sum_{l \in \N} (\pd_{x^0}^{m} R_l) \mu_l^{\frac{p-m}{2}} \psi_l\|_{L_{-\delta}^2(\hat{T})}\lesssim \lambda^{-1}\|\pd^m_{x^0}(-\Delta_{g})^{\frac{p-m}{2}}F\|_{L^2(\hat{T})}.$$
To complete the proof, we write
\begin{multline*}
\|r\|_{H^k(\M)}\leq \|R\|_{H^k(\hat{T})}\lesssim \sum_{p=0}^k\sum_{m=0}^{p}\|\sum_{l \in \N} (\pd_{x^0}^{m} R_l) \mu_l^{\frac{p-m}{2}} \psi_l\|_{L^2(\hat{T})}\\
 \lesssim \lambda^{-1} \|F\|_{H^k(\hat{T})}\lesssim \lambda^{-1} \|f\|_{H^k(\M)}.
\end{multline*}
\end{proof}

\section{The Jacobi weighted ray transform}
\label{section_jacobi}
This section is concerned with the introduction of a geometrical data related to the transversal manifold $(M,g)$ that will appear later in the proof of Theorem~\ref{t1}--\ref{t2}. Before proceeding, let us introduce some notation, following \cite[Section 1.2]{MR2571812}. Given a maximal unit speed geodesic $\gamma(t) \subset M$ with $t \in [\tau_-,\tau_+]$, we define the {\em orthogonal complement}, $\dot{\gamma}(t)^\perp$, at the point $\gamma(t)$ as the set
$$\dot{\gamma}(t)^\perp:=\{v \in T_{\gamma(t)}M\,|\, g(\dot{\gamma}(t),v)=0\}.$$
We also define the $(1,1)$-tensor $\Pi_{\gamma}(t)=\Pi^j_i(t)\,\frac{\pd}{\pd x^j}\otimes\,dx^i$ to be the projection from $T_{\gamma(t)}M$ onto $\dot{\gamma}^\perp(t)$. Finally, we say that a $(1,1)$-tensor $L(t)$ along $\gamma$ is {\em transversal} if $ \Pi_\gamma\, L\, \Pi_\gamma=L$.
Transversal $(1,1)$-tensors can be viewed as linear maps from $\dot{\gamma}^{\perp}(t)$ to itself. Now, given such a tensor $L$, we consider the complex Jacobi equation
\bel{jacobi} \frac{D^2}{dt^2}L(t)-K(t) L(t)=0,\ee
with the initial condition
$$ L(0)=L_0,\quad \dot{L}(0)=L_1.$$
Here, 
$$K= K^i_j \frac{\pd}{\pd x^i}\otimes \,dx^j,\quad K^i_j=g^{ik}R_{kj}$$ 
where $R$ denotes the Ricci tensor. We recall from \cite{MR2571812} that if a complex $(1,1)$-tensor $L$ solves the complex Jacobi equation and $L_0,L_1$ are transversal, then $L(t)$ is transversal along $\gamma(t)$ for all $t \in [\tau_-,\tau_+]$.

For each maximal geodesic $\gamma$ in $M$, we let  $\mathbb Y_{\gamma}$ denote the set of all transversal $(1,1)$-tensors $Y(t)$ that solve equation \eqref{jacobi} subject to the
additional constraint that
\bel{Y cond} 
\begin{aligned}
&\text{$Y(\tau_0)$ is non-degenerate, $\dot{Y}(\tau_0)Y(\tau_0)^{-1}$ is symmetric} \\
&\text{and $\Im(\dot Y(\tau_0)Y(\tau_0)^{-1})>0$ for some $\tau_0\in [\tau_-,\tau_+]$}.
\end{aligned}
\ee
Here, $\Im$ denotes the imaginary part and we recall that $[\tau_-,\tau_+]$ is the interval of definition associated to the maximal unit speed geodesic $\gamma(t)$ in $M$. We now define, for all $Y \in \mathbb Y_{\gamma}$, the Jacobi weighted ray transform of the first and second kind, $\mathscr J^{(1)}_{Y}$ and $\mathscr J^{(2)}_{Y}$, as follows.

\bel{jacobi transform}
\begin{aligned}  
\mathscr J^{(1)}_{Y}\,f:&=\int_{\tau_-}^{\tau_+}f(\gamma(t))\,(\det Y(t))^{-\frac{1}{2}}\,dt\quad\forall\,f \in \CI(M),\\
\mathscr J^{(2)}_{Y}\,f:&=\int_{\tau_-}^{\tau_+} f(\gamma(t))\,|\det Y(t)|^{-1}\,dt \quad \forall\, f \in \CI(M).
\end{aligned}
\ee
The following lemma guarantees that $\mathscr J^{(j)}_Y$, $j=1,2$ is well-defined.
\begin{lemma}
\label{Ywell-posed}
For all $Y \in \mathbb Y_{\gamma}$ we have that 
$Y(t)$ is non-degenerate, $\dot{Y}(t)Y(t)^{-1}$ is symmetric and $\Im(\dot Y(t)Y(t)^{-1})>0$ for all $t \in [\tau_-,\tau_+]$.
\end{lemma}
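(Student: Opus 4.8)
The plan is to show that each of the three properties---non-degeneracy of $Y(t)$, symmetry of $\dot Y(t)Y(t)^{-1}$, and positivity of its imaginary part---propagates from the single point $\tau_0$ to the whole interval $[\tau_-,\tau_+]$. The natural tool is the matrix Riccati equation satisfied by $W(t):=\dot Y(t) Y(t)^{-1}$ wherever $Y$ is invertible: differentiating and using \eqref{jacobi} gives $\dot W = \ddot Y Y^{-1} - \dot Y Y^{-1}\dot Y Y^{-1} = K - W^2$, a first-order ODE for the transversal $(1,1)$-tensor $W$ that does not see $Y$ directly. The strategy is to run this on the maximal subinterval $J \subset [\tau_-,\tau_+]$ containing $\tau_0$ on which $Y$ stays non-degenerate, prove the algebraic properties there, and then argue $J$ cannot have an endpoint in the interior.

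First I would establish symmetry. Set $A(t):=W(t)-W(t)^{\top}$ (transpose with respect to $g$ restricted to $\dot\gamma^\perp$). Since $K$ is symmetric (it comes from the Ricci tensor, $K^i_j=g^{ik}R_{kj}$ with $R$ symmetric), subtracting the Riccati equation from its transpose yields a linear homogeneous ODE for $A$, schematically $\dot A = -(WA + AW^{\top})$, with $A(\tau_0)=0$ by hypothesis \eqref{Y cond}; uniqueness for linear ODEs forces $A\equiv 0$ on $J$, so $W$ is symmetric throughout $J$. Next, for the imaginary part, write $B(t):=\Im W(t)=\tfrac1{2i}(W-\bar W)$, which is a genuine symmetric real-bilinear form on $\dot\gamma^\perp$ once symmetry of $W$ is known. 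Taking imaginary parts in $\dot W=K-W^2$ (note $K$ is real) gives a Lyapunov-type equation $\dot B = -(\Re W)\,B - B\,(\Re W)$ along $\gamma$. Hence $B(t) = \Phi(t) B(\tau_0)\Phi(t)^{\top}$ where $\Phi$ solves $\dot\Phi = -(\Re W)\Phi$, $\Phi(\tau_0)=\mathrm{Id}$; since $\Phi(t)$ is invertible for every $t\in J$, the congruence preserves positive-definiteness, and $B(\tau_0)>0$ gives $B(t)>0$ on all of $J$.

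It remains to upgrade $J$ to the full interval, i.e.\ to rule out $Y(t_\ast)$ becoming degenerate at some $t_\ast$ on the boundary of $J$ inside $[\tau_-,\tau_+]$. Here the positivity of $B=\Im W$ is the crucial leverage: approaching such a $t_\ast$, some vector $v$ in the kernel of $Y(t_\ast)$ would, via $\dot Y Y^{-1}=W$, force $|W(t)v|\to\infty$; but one can instead argue directly that $Y$ and $\bar Y$ form a fundamental system whose Wronskian-type quantity $\mathcal{W}(t):=Y(t)^{\top}\dot{\bar Y}(t)-\dot Y(t)^{\top}\bar Y(t)$ is constant along $\gamma$ (its derivative vanishes by \eqref{jacobi} and symmetry of $K$), equal at $\tau_0$ to $Y(\tau_0)^{\top}(\dot{\bar Y}(\tau_0)-W(\tau_0)^{\top}\bar Y(\tau_0))=-2i\,Y(\tau_0)^\top B(\tau_0)\bar Y(\tau_0)$, which is non-degenerate. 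If $Y(t_\ast)\xi=0$ for some $\xi\neq0$, then $\mathcal W(t_\ast)\xi = -\dot Y(t_\ast)^{\top}\bar Y(t_\ast)\xi$, and pairing with $\bar\xi$ while using that $Y(t_\ast)\xi=0$ (so $\bar Y(t_\ast)\bar\xi$ need not vanish) forces $\mathcal W(t_\ast)$ to be degenerate on the relevant subspace, contradicting constancy of the non-degenerate $\mathcal W(\tau_0)$. Thus $Y(t)$ stays invertible on all of $[\tau_-,\tau_+]$, $J=[\tau_-,\tau_+]$, and the three asserted properties hold everywhere.

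The main obstacle I anticipate is the last step: turning the heuristic ``$\Im W>0$ prevents $Y$ from degenerating'' into a clean argument. The cleanest route is the constant-Wronskian computation sketched above---one must be careful that all identities are the correct transversal ones (everything is conjugated by $\Pi_\gamma$, and ``$\top$'' means the adjoint with respect to $g|_{\dot\gamma^\perp}$), and that $K$'s symmetry is used exactly where the cross terms must cancel. Once the conserved quantity is correctly identified and shown non-degenerate at $\tau_0$, non-degeneracy of $Y$ on the whole interval follows, and the symmetry and positivity propagation via the Riccati equation are then routine linear-ODE arguments.
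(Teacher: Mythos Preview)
The paper does not give its own proof of this lemma; it simply refers to \cite[Lemma 2.56]{MR1889089}. Your plan is essentially that classical argument: pass to the Riccati tensor $W=\dot Y Y^{-1}$, propagate symmetry via the linear ODE $\dot A=-(WA+AW^{\top})$, propagate positivity of $B=\Im W$ via the Lyapunov equation $\dot B=-(\Re W)B-B(\Re W)$, and rule out degeneracy of $Y$ by a conserved Wronskian built from $Y$ and $\bar Y$. So strategically you are exactly on the standard track.

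Your final step, however, contains two slips that you should fix. First, from $Y(t_\ast)\xi=0$ you \emph{do} get $\bar Y(t_\ast)\bar\xi=\overline{Y(t_\ast)\xi}=0$; this is not something that ``need not vanish'' but precisely the ingredient you use. Second, the identity ``$\mathcal W(t_\ast)\xi=-\dot Y(t_\ast)^{\top}\bar Y(t_\ast)\xi$'' is false as written, because in $\mathcal W=Y^{\top}\dot{\bar Y}-\dot Y^{\top}\bar Y$ it is $Y^{\top}$ that acts, not $Y$, so $Y(t_\ast)\xi=0$ does not kill the first term. The clean way is to pair on both sides: compute the scalar
\[
\xi^{\top}\mathcal W(t_\ast)\,\bar\xi
=(Y(t_\ast)\xi)^{\top}\dot{\bar Y}(t_\ast)\bar\xi-(\dot Y(t_\ast)\xi)^{\top}\bar Y(t_\ast)\bar\xi=0,
\]
using $Y(t_\ast)\xi=0$ for the first term and $\bar Y(t_\ast)\bar\xi=0$ for the second. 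On the other hand, by constancy $\mathcal W(t_\ast)=\mathcal W(\tau_0)=-2i\,Y(\tau_0)^{\top}B(\tau_0)\bar Y(\tau_0)$, so with $\eta:=Y(\tau_0)\xi\neq0$ one gets
\[
\xi^{\top}\mathcal W(\tau_0)\,\bar\xi=-2i\,\eta^{\top}B(\tau_0)\bar\eta,
\]
and since $B(\tau_0)$ is real symmetric positive definite, $\eta^{\top}B(\tau_0)\bar\eta=\bar\eta^{\,*}B(\tau_0)\eta^{\,\ast\ast}=\langle B(\tau_0)\eta,\eta\rangle>0$, a contradiction. (Equivalently, and perhaps more transparently, work with the Hermitian Wronskian $Y^{*}\dot Y-\dot Y^{*}Y$, which is conserved by the same computation and equals $2i\,Y(\tau_0)^{*}B(\tau_0)Y(\tau_0)$ at $\tau_0$; then $\xi^{*}(\cdot)\xi$ does the job directly.) With this correction your proof goes through and matches the cited reference.
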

We refer the reader to \cite[Lemma 2.56]{MR1889089} for the proof of this lemma. In the following subsections, we will study the injectivity of the Jacobi ray transforms of the first and second kind along a single maximal geodesic $\gamma$. Before presenting the main results and their proofs, let us give a heuristic discussion to shed some light on the approach. Recall that the matrices $Y(t)$ solve second order ODEs along the geodesics. Therefore it is possible to choose weights in the Jacobi transform that have a limiting singular behavior at any fixed point $p$. We will see that this singularity will also appear at all points that are conjugate to $p$ on $\gamma$. 

In the case of the Jacobi transform of the second kind, the transform will have a limiting singularity at $p$ and its conjugate points and local information can be obtained in all dimensions under the admissibility assumption (Proposition~\ref{jacobi ray inversion}). In the case of the Jacobi ray transform of the first kind, the proof is more delicate as there is no limiting singularity in the transform. Here, the imaginary part of the transform can be localized to deduce injectivity. We have presented injectivity of the Jacobi transform of the first kind along a single geodesic, only in dimensions two and three and left the higher dimensional cases open (Proposition~\ref{jacobi ray inversion 1}). This will be further discussed in Remark~\ref{rmk_jacobi}. 

\subsection{Inversion of Jacobi weighted ray transform of the first kind}

This subsection is concerned with the following injectivity result.

\begin{proposition}
\label{jacobi ray inversion 1}
Suppose $(M,g)$ is a two or three dimensional compact smooth Riemannian manifold with boundary. Suppose that $\gamma$ is a maximal geodesic in $M$ that contains no conjugate points. Let $f \in \mathcal C(M;\R)$. The following injectivity result holds:
$$ \mathscr J^{(1)}_{Y} f=0, \quad \forall\,Y \in \mathbb Y_{\gamma}\quad \implies \quad f(\gamma(t))=0\quad \forall \,t\in[\tau_-,\tau_+].$$
\end{proposition}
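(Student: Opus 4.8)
My plan is to exploit the freedom in choosing $Y \in \mathbb Y_\gamma$ so that the weight $(\det Y(t))^{-1/2}$ concentrates near a chosen point $\gamma(t_0)$, and then extract $f(\gamma(t_0))$ from the behaviour of $\mathscr J^{(1)}_Y f$ as the concentration parameter degenerates. Concretely, since $\gamma$ has no conjugate points, I can fix a base time $\tau_0$ and consider the matrix solution $A(t)$ of the Jacobi equation with $A(\tau_0) = \mathrm{Id}$, $\dot A(\tau_0) = 0$, and the solution $B(t)$ with $B(\tau_0)=0$, $\dot B(\tau_0)=\mathrm{Id}$; the no-conjugate-point hypothesis guarantees $B(t)$ is nondegenerate for $t \neq \tau_0$ in the interior, and in fact for all $t\in[\tau_-,\tau_+]\setminus\{\tau_0\}$. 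For a complex parameter with positive imaginary part, say $is$ with $s>0$ small, or more generally a symmetric matrix $Z$ with $\Im Z > 0$, the tensor $Y_Z(t) = A(t) + B(t)Z$ lies in $\mathbb Y_\gamma$ (one checks $\dot Y_Z(\tau_0)Y_Z(\tau_0)^{-1} = Z$, symmetric with positive imaginary part, and Lemma~\ref{Ywell-posed} then certifies nondegeneracy everywhere). As $Z \to i0^+\cdot\mathrm{Id}$ in an appropriate sense, or rather as we take $Z = \zeta\,\mathrm{Id}$ with $\zeta$ approaching the real axis where $A(t)+B(t)\zeta$ can vanish, the integrand develops a controlled singularity at the time $t_0$ where $\det(A(t_0)+B(t_0)\zeta)=0$.

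The next step is to turn this into a localization identity. I would pick $\tau_0$ and observe that, by varying $\tau_0$ along $\gamma$, the zero set of $t\mapsto \det(A(t)+B(t)\zeta)$ (for $\zeta$ real) sweeps out the whole geodesic; so it suffices to recover $f(\gamma(t_0))$ for one well-chosen configuration. The cleanest route in dimensions two and three is to look at the imaginary part: writing $\zeta = a + ib$ with $b \downarrow 0$, the quantity $\Im\big[(\det Y_{\zeta\mathrm{Id}}(t))^{-1/2}\big]$ is, after normalization, an approximate identity concentrating at $t_0$ — this is where the dimensional restriction enters, because $\det Y$ is a degree-$(n-1)$ polynomial in $\zeta$ near the zero, so $(\det Y)^{-1/2}$ behaves like $(t-t_0)^{-(n-1)/2}$ times a nonvanishing factor, and only for $n-1 \in \{1,2\}$ is the relevant distributional limit of the imaginary part a clean multiple of $\delta_{t_0}$ (for $n-1=1$ one gets a principal-value/delta split directly, for $n-1=2$ one gets $\Im (t-t_0-i0)^{-1} $ type behaviour). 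Taking $b\downarrow 0$ and using $\mathscr J^{(1)}_{Y_\zeta}f = 0$ for all admissible $\zeta$, the limit of the imaginary part forces a weighted integral of $f(\gamma(\cdot))$ against this approximate identity to vanish, hence $f(\gamma(t_0)) = 0$. Since $t_0$ ranges over a dense set of times as $\tau_0$ and $\zeta$ vary, and $f\circ\gamma$ is continuous, we conclude $f(\gamma(t)) = 0$ on all of $[\tau_-,\tau_+]$.

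There are two points requiring care. First, the branch of the square root: $(\det Y(t))^{-1/2}$ must be defined continuously along $\gamma$ starting from a fixed branch at $\tau_0$, and I need to track how this branch interacts with $\Im Z > 0$ to ensure the imaginary part has a definite sign near $t_0$ so that no cancellation destroys the approximate-identity property; Lemma~\ref{Ywell-posed} (that $\Im(\dot Y Y^{-1})>0$ throughout) is the tool that keeps $\det Y$ away from $0$ and pins down the argument of $\det Y$. Second, and this is the main obstacle, is making the singularity analysis rigorous: I must expand $\det(A(t)+B(t)\zeta)$ near its zero in $t$, control the non-leading terms and the smooth prefactor uniformly, and justify passing the $b\downarrow 0$ limit inside the integral against the continuous function $f\circ\gamma$. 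This is essentially a Sokhotski–Plemelj / stationary-type argument for a weight of the form $(\phi(t) - i0)^{-(n-1)/2}$ with $\phi$ vanishing simply at $t_0$, and its clean conclusion is exactly what restricts the statement to $n \le 3$; I expect the bulk of the work (and the content of Remark~\ref{rmk_jacobi}) to be precisely in identifying which distributions arise and why the higher-dimensional cases are not immediately amenable.
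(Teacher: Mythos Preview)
Your plan is essentially correct for $\dim M=3$ and matches the paper's approach there, but it has a genuine gap when $\dim M=2$.

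\medskip
\textbf{The three-dimensional case.} Your idea of taking the imaginary part and letting the parameter degenerate so that $(\det Y)^{-1/2}\sim(t-t_0-i0)^{-1}$ produces a $\pi\delta_{t_0}$ is exactly what the paper does. However, you make life harder than necessary by separating the base time $\tau_0$ from the target time $t_0$ and then trying to argue that $t_0$ sweeps out the geodesic as $\zeta$ varies. The paper simply puts $\tau_0=t_0$: for the target point $p=\gamma(0)$ it takes $Y^\epsilon(0)=-i\epsilon\,I$, $\dot Y^\epsilon(0)=I$ (in your notation this is $Z=(i/\epsilon)I$, i.e.\ large imaginary part, not small). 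Then $\det Y^\epsilon(t)=(t-i\epsilon)^2(1+O(t))$ near $0$, and writing $Y^\epsilon=X-i\epsilon Z$ with $X(0)=0$ one checks $\det X>0$ on $[\tau_-,\tau_+]\setminus\{0\}$ (it is $\approx t^2$ near $0$ and nondegenerate elsewhere by the no-conjugate-point hypothesis), so the imaginary part of $(\det Y^\epsilon)^{-1/2}$ is $O(\epsilon)$ away from $0$. This bypasses your ``sweeping'' step and the delicate sign analysis away from $t_0$.

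\medskip
\textbf{The two-dimensional case: the gap.} Your assertion that for $\dim M=2$ the imaginary part of $(t-t_0-i0)^{-1/2}$ gives ``a principal-value/delta split directly'' is incorrect. Sokhotski--Plemelj applies to \emph{integer} powers; for the square root one has, with the natural branch,
\[
\Im\bigl[(t-t_0-i0)^{-1/2}\bigr]=\chi_{\{t<t_0\}}\,(t_0-t)^{-1/2},
\]
an integrable one-sided weight with no $\delta$-component and no concentration at $t_0$. So your localization argument collapses in this dimension. The paper handles $\dim M=2$ by an entirely different route: with the base point placed at $\tau_-'<\tau_-$ (just outside $M$) and $Y^\epsilon=X-i\epsilon Z$ scalar, the no-conjugate-point hypothesis gives $X>0$ on $[\tau_-,\tau_+]$, so one may Taylor-expand
\[
(\det Y^\epsilon)^{-1/2}=X^{-1/2}\bigl(1-i\epsilon\,Z/X\bigr)^{-1/2}
\]
in powers of $\epsilon$ to obtain the moment conditions
\[
\int_{\tau_-}^{\tau_+} f(\gamma(t))\,X(t)^{-1/2}\,\bigl(Z(t)/X(t)\bigr)^k\,dt=0,\qquad k=0,1,2,\ldots
\]
The Wronskian of the scalar Jacobi equation is constant, giving $\frac{d}{dt}(Z/X)=-X^{-2}<0$, so $t\mapsto Z/X$ is a strictly monotone change of variable, and Stone--Weierstrass then forces $f\circ\gamma\equiv 0$. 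This is the missing idea you need in two dimensions.
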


\begin{proof}[Proof of Proposition~\ref{jacobi ray inversion 1}]
For consistency of notation, we will denote the dimension of $M$ by $n-1$ with $n \in \{3,4\}$ throughout this proof.

Let us first consider the case when $n=3$. In this case the Jacobi matrices will simply be complex valued scalar functions along the geodesic, that solve \eqref{jacobi} on the maximal interval $[\tau_-,\tau_+]$. It is in fact possible to consider this equation on a slightly larger interval $[\tau_-',\tau_+']$ after choosing a smooth extension of $K$. We consider a family of solutions $Y^\epsilon:[\tau_-',\tau_+']\to \C$ to \eqref{jacobi} with $\epsilon>0$, subject to the initial data 
\bel{initial data'}Y^\epsilon (\tau_-')= -i\epsilon  \quad \text{and} \quad \dot{Y}^\epsilon(\tau_-')=1.\ee
Observe that $Y^\epsilon \in \mathbb Y_\gamma$ since condition \eqref{Y cond} is satisfied for $\tau_0=\tau_-'$ for all $\epsilon>0$. Note also that 
$$ Y^\epsilon= X- i\epsilon Z$$
where $X$ and $Z$ are real-valued solutions to \eqref{jacobi} subject to $X(\tau_-')=0$, $\dot{X}(\tau_-')=1$ and $Z(\tau_-')=1$, $\dot{Z}(\tau_-')=0$. We also record that since $\gamma$ contains no conjugate points on $[\tau_-,\tau_+]$, it follows that $X(t)$ is strictly positive for $t \in [\tau_-,\tau_+]$. 

Recall that by the hypothesis of the proposition, $$\mathscr J^{(1)}_{Y^{\epsilon}}\,f=0$$ for all $\epsilon>0$. This equation reduces to 
$$\int_{\tau_-}^{\tau_+} f(\gamma(t))X(t)^{-\frac{1}{2}}\,(1-i\epsilon \tilde{X})^{-\frac{1}{2}}\,dt=0,$$
where $\tilde{X}(t)=Z(t)X(t)^{-1}$. Applying the Taylor series approximation of $(1-i\epsilon \tilde{X})^{-\frac{1}{2}}$ near $\epsilon=0$ we deduce that
\bel{id1}\int_{\tau_-}^{\tau_+} f(\gamma(t))X(t)^{-\frac{1}{2}}\tilde{X}(t)^{k}\,dt=0,\quad \text{for $k=0,1,\ldots$}.\ee

We claim that $\tilde{X}(t)$ is strictly decreasing on $[\tau_-,\tau_+]$. To see this, observe that
$$\dot{\tilde{X}}(t)=(\dot{Z}(t)X(t)-\dot{X}(t)Z(t))X(t)^{-2}=W_{Z,X}(t)X(t)^{-2},$$
where $W_{Z,X}(t)$ denotes the Wronskian corresponding to the Jacobi equation \eqref{jacobi} and as such satisfies
$$ W_{Z,X}(t)=W_{Z,X}(\tau_-')=-1,\quad t \in [\tau_-,\tau_+]$$
where we are using the fact that $X,Z$ are scalar functions that solve a second order ODE and therefore their Wronskian is constant. Using this observation, we conclude that
$$ \dot{\tilde{X}}(t)<0, \quad \forall\, t \in [\tau_-,\tau_+],$$
implying that $\tilde X$ is strictly decreasing on $[\tau_-,\tau_+]$. We can consequently define the new variable $\tilde t=\tilde X(t)$ and rewrite \eqref{id1} in terms of $\tilde t$ as
$$ \int_{\tilde{X}(\tau_+)}^{\tilde{X}(\tau_-)} f(\gamma(\tilde{X}^{-1}(\tilde t)))\frac{X(\tilde{X}^{-1}(\tilde t))^{-\frac{1}{2}}}{\tilde{X}'(\tilde{X}^{-1}(\tilde t))}\tilde{t}^{k}\,d\tilde t=0,\quad \text{for $k=0,1,\ldots$}.$$
Finally, using the Stone-Weierstrass theorem, and since $f \in \CI(M)$, we conclude that $f$ must identically vanish along $\gamma$, thus concluding the proof for the case $n=3$. 

Let us now consider the case that $n=4$. We consider an arbitrary point $p \in \gamma$ and assume without loss of generality that $p=\gamma(0)$. Let $\{v_1,v_2\} \subset \dot{\gamma}^{\perp}(0)$ be an orthonormal basis for $\dot{\gamma}(0)^\perp$ and for each $\epsilon>0$, consider the unique transversal $(1,1)$-tensor $Y^{\epsilon}$ solving the Jacobi equation \eqref{jacobi} subject to initial conditions
\bel{initial data 0} 
Y^{\epsilon}_k(0)=-i\epsilon\, v_k\quad \text{and}\quad \dot{Y}_k^{\epsilon}(0)=v_k\quad \text{for $k=1,2$},
\ee
where $Y^{\epsilon}_k$ is the $k^{\text{th}}$ column of the tensor $Y^{\epsilon}$ at the point $p$ with respect to the basis $\{v_1,v_2\}$. Observe that condition \eqref{Y cond} holds at $t=0$ for each $\epsilon>0$, implying that $Y^{\epsilon} \in \mathbb Y_{\gamma}$. By the hypothesis of the proposition, we have \bel{r01}\mathscr J^{(1)}_{Y^{\epsilon}}\,f =0,\quad \forall\, \epsilon>0.\ee

We can write $Y^{\epsilon}=X-i\epsilon Z$ where $X,Z$ solve \eqref{jacobi} subject to the initial conditions
\[
X_k(0)=0,\quad Z_k(0)=v_k \quad \text{and}\quad \dot{X}_k(0)=v_k,\quad\dot{Z}_k(0)=0,\quad \text{for $k=1,2$}.
\]

Our aim now is to study the behavior of $\mathscr J^{(1)}_{Y^{\epsilon}}\,f$ as $\epsilon$ approaches zero. Observe that $Y^\epsilon$ with $\epsilon=0$ is singular at $t=0$. This singularity will allow us to obtain information about the value of $f$ at the point $p$. However, $Y^{\epsilon}$ is analytic with respect to $\epsilon$ and studying the asymptotic behavior of $\mathscr J_{Y^\epsilon}$ itself will not contain any local information about $f$ on $\gamma$. To remedy this issue, we define 
$$\mathscr S_\epsilon f= \mathscr J^{(1)}_{Y^{\epsilon}}\,f - \overline{\mathscr J^{(1)}_{Y^{\epsilon}}\,f},$$
and note that by the hypothesis of the proposition, and since $f$ is real valued we have $\mathscr S_\epsilon f=0$. We now choose a small positive parameter $\zeta$ and assume that $0<\epsilon<\zeta$. In what follows, we will study the limiting behavior of $\mathscr S_\epsilon\, f$ as $\epsilon$ approaches zero while $\zeta$ is fixed.

Writing $\{e_1(t),e_2(t)\}$ to denote the parallel transport of the orthonormal basis $\{v_1,v_2\}$ along $\gamma$, it is easy to see that $Y^{\epsilon}(t)$ can be thought of as a two by two matrix with respect to the basis $\{e_1(t),e_2(t)\}$. Next, using the Taylor series approximation for the matrix $Y^\epsilon$ near $t=0$, we deduce that given any $|t|<\zeta$ and all $\epsilon<\zeta$, there holds
\bel{determinant0}
\begin{aligned}
\begin{cases}
|Y^{\epsilon}_{ij}(t)| \leq C_0t^2
&\text{for $i\neq j$ and $i,j=1,2$}
\\
|Y_{jj}^{\epsilon}(t)-(t-i\epsilon)|\leq C_0t^2,
&\text{for $j=1,2$},
\end{cases}
    \end{aligned}
\ee
where $C_0>0$ is independent of $\zeta$ and $\epsilon$. Here, we are considering the matrix $Y^\epsilon$ with respect to the basis $\{e_1,e_2\}$. Applying these estimates to the expression for $\det Y^{\epsilon}$, and noting that $|t-i\epsilon|>|t|$, we deduce that there exists a constant $C_1>0$ independent of $\epsilon,\zeta$, such that
\[
\det Y^{\epsilon}(t)=(t-i\epsilon)^{2}(1+ r^{\epsilon}(t))\quad \text{with}\quad |r^{\epsilon}(t)| \leq C_1 |t|,
\]
for all $t \in (-\zeta,\zeta)$. Consequently, there exists a constant $C_2>0$ independent of $\epsilon,\zeta$ such that
\bel{bound_00}
\left|(\det Y^{\epsilon})^{-\frac{1}{2}} -\overline{(\det Y^{\epsilon})^{-\frac{1}{2}}}-2i\Im ((t-i\epsilon)^{-1}) \right| \leq C_2,
\ee
for all $t \in (-\zeta,\zeta)$. 

Let us now consider the interval $[\tau_-,\tau_+] \setminus(-\zeta,\zeta)$. First, note that $\det X(0)=0$ and that by Definition~\ref{adm}, no point on $\gamma(t)$ is conjugate to $\gamma(0)$ for $t \neq 0$ (see for example \cite[Section 5.5]{do1992riemannian}). We deduce that
$$ \det X \neq 0\quad \text{on}\quad [\tau_-,\tau_+]\setminus (-\zeta,\zeta).$$
Note also that by applying the point-wise bounds in \eqref{determinant0} for $\epsilon=0$, it follows that,
$$ |\det X(t)-t^2|\leq C t^3,\quad \text{for all $|t|\leq \zeta$},$$
for some $C$ independent of $\zeta$. Together with the fact that $X$ is non-degenerate away from the origin, we conclude that 
$$ \det X(t) >0 \quad \forall \, t  \in [\tau_-,\tau_+]\setminus \{0\}.$$
Using this observation, we write 
$$ \det Y^{\epsilon}=\det X\det(I-i\epsilon ZX^{-1})=\det X - i\epsilon \det X\Tr (ZX^{-1})+\mathcal O(\epsilon^2)$$
where we applied the expansion formula for the characteristic polynomial of matrices in the last step. Since $\det X$ is strictly positive away from the origin, we can conclude that there exists a constant $C_\zeta>0$ only depending on $\zeta$, such that
\bel{bound_20}
\left| (\det Y^{\epsilon})^{-\frac{1}{2}} -\overline{(\det Y^{\epsilon})^{-\frac{1}{2}}}\right| \leq C_\zeta\, \epsilon \quad \text{for $t \in [\tau_-,\tau_+]\setminus (-\zeta,\zeta)$}.
\ee
Let us now analyze the limiting behavior of $\mathscr S_\epsilon \,f$ as $\epsilon$ approaches zero. To this end, we begin by writing
$$ 0=\mathscr S_\epsilon\,f = A^{(1)}_{\zeta,\epsilon}+A^{(2)}_{\zeta,\epsilon}+A^{(3)}_{\zeta,\epsilon},$$ 
where 
\bel{As}
\begin{aligned}
A^{(1)}_{\zeta,\epsilon}&=\int_{-\zeta}^{\zeta}f(\gamma(0))\left((\det Y^{\epsilon})^{-\frac{1}{2}} -\overline{(\det Y^{\epsilon})^{-\frac{1}{2}}}\right)\,dt,\\
A^{(2)}_{\zeta,\epsilon}&=\int_{-\zeta}^{\zeta}(f(\gamma(t))-f(\gamma(0)))\left((\det Y^{\epsilon})^{-\frac{1}{2}} -\overline{(\det Y^{\epsilon})^{-\frac{1}{2}}}\right)\,dt\\
A^{(3)}_{\zeta,\epsilon}&=\int_{[\tau_-,\tau_+]\setminus (-\zeta,\zeta)}f(\gamma(t))\left((\det Y^{\epsilon})^{-\frac{1}{2}} -\overline{(\det Y^{\epsilon})^{-\frac{1}{2}}}\right)\,dt.
\end{aligned}
\ee
For the term $A^{(1)}_{\zeta,\epsilon}$, we use the bound \eqref{bound_00} together with the estimate 
\bel{osc int}
\int_{-\zeta}^{\zeta} \Im\left((t-i\epsilon)^{-1}\right)\,dt=\int_{-\zeta}^{\zeta} |\Im\left((t-i\epsilon)^{-1}\right)|\,dt=\pi+\mathcal O(\epsilon),
\ee
to write
$$ \left|A^{(1)}_{\zeta,\epsilon}-2\pi i\,f(\gamma(0))\right| \leq C_3\,\|f\|_{L^{\infty}}\zeta+C_4(\zeta)\|f\|_{L^{\infty}}\epsilon,$$
for some $C_3>0$ independent of $\zeta$, $\epsilon$ and $C_4(\zeta)>0$ independent of $\epsilon$. Here, we are using the fact that \eqref{bound_00} is uniformly bounded in $\zeta$. The appearance of $\zeta$ in the first term above is due to the length of the interval of integration in $A^{(1)}_{\zeta,\epsilon}$. 

For the term $A^{(2)}_{\zeta,\epsilon}$, we use the bounds \eqref{bound_00} and \eqref{osc int} again to obtain
$$ \left| A^{(2)}_{\zeta,\epsilon}\right| \leq C_5\, (\omega_f(\zeta)+\|f\|_{L^{\infty}}\zeta)+C_6(\zeta)\,\|f\|_{L^{\infty}}\epsilon,$$
for some $C_5$ independent of $\zeta$, $\epsilon$ and $C_6(\zeta)$ independent of $\epsilon$, where $\omega_f$ denotes the modulus of continuity for the function $f$ at the point $\gamma(0)$. Finally, for the term $A^{(3)}_{\zeta,\epsilon}$, we use the bound \eqref{bound_20} to write
$$ \lim_{\epsilon \to 0}A^{(3)}_{\zeta,\epsilon}=0.$$

We now return to $\mathscr S_\epsilon\,f$, letting $\epsilon$ approach zero. Using the last three estimates for $A^{(k)}_{\zeta,\epsilon}$, $k=1,2,3$, we deduce that
$$ \left|f(\gamma(0))\right| \leq (C_3+C_5)\,\|f\|_{L^{\infty}}\zeta+C_5\,\omega_f(\zeta).$$
Finally, letting $\zeta$ converge to zero, it follows that $f(p)=0$. Since $p \in \gamma$ is arbitrary, it follows that $f(\gamma(t))=0$ for all $t \in [\tau_-,\tau_+]$.
\end{proof}

\begin{remark}
\label{rmk_jacobi}
The restriction on the dimension is mainly due to two technical difficulties. One has to do with the parity of the dimension. Namely, when the dimension of $M$ is odd, an equation of the form \eqref{bound_20} will no longer be valid. This is due to the fact that $\det X$ will change sign at the origin and the imaginary part of $\det Y^\epsilon$ will not become small in $\epsilon$ away from the region $(-\zeta,\zeta)$. This is the reason that we have pursued a different approach for proving the proposition when $\dim M=2$. Another general issue with higher dimensions seems to be the fact that the bound \eqref{bound_00} will no longer hold. 
\end{remark}

\subsection{Inversion of Jacobi weighted ray transform of the second kind}

This section is concerned with the proof of the following proposition.
\begin{proposition}
\label{jacobi ray inversion}
Suppose $(M,g)$ is a smooth compact Riemannian manifold with boundary. Let $p \in M$ and $\gamma$ be a maximal geodesic passing through $p$ that contains no conjugate points to $p$. Let $f \in \mathcal C(M;\C)$. The following injectivity result holds:
$$ \mathscr J^{(2)}_{Y} f=0, \quad \quad \forall \,Y \in \mathbb Y_{\gamma}\quad \implies f(p)=0.$$
\end{proposition}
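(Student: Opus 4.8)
The plan is to mimic, in arbitrary dimension, the singular-weight construction used in the $n=4$ case of Proposition~\ref{jacobi ray inversion 1}; the key simplification here is that the weight $|\det Y(t)|^{-1}$ is nonnegative, so the family of weights we build will behave like an approximate identity concentrating at $p$, and none of the oscillation / imaginary-part cancellation needed for the first-kind transform will be required. Write $d=\dim M$ and, translating the geodesic parameter, assume $p=\gamma(0)$. Fix an orthonormal basis $\{v_1,\dots,v_d\}$ of $\dot\gamma(0)^{\perp}$ and, for each $\epsilon>0$, let $Y^{\epsilon}$ be the transversal $(1,1)$-tensor solving \eqref{jacobi} with $Y^{\epsilon}_k(0)=-i\epsilon\,v_k$ and $\dot Y^{\epsilon}_k(0)=v_k$. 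Then $Y^{\epsilon}(0)=-i\epsilon I$ and $\dot Y^{\epsilon}(0)Y^{\epsilon}(0)^{-1}=(i/\epsilon)I$, so condition \eqref{Y cond} holds at $\tau_0=0$ and $Y^{\epsilon}\in\mathbb Y_{\gamma}$ for every $\epsilon>0$. Writing $Y^{\epsilon}=X-i\epsilon Z$, the tensors $X,Z$ solve \eqref{jacobi} with $X_k(0)=0$, $\dot X_k(0)=v_k$, $Z_k(0)=v_k$, $\dot Z_k(0)=0$.

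Next I would pin down the behaviour of the weight. Taylor expanding $Y^{\epsilon}$ at $t=0$ in the parallel frame, using $\ddot Y^{\epsilon}(0)=K(0)Y^{\epsilon}(0)=-i\epsilon K(0)$, gives the entrywise bounds \eqref{determinant0} with $d$ in place of $2$, uniformly for $|t|,\epsilon<\zeta$; expanding the determinant then yields $\det Y^{\epsilon}(t)=(t-i\epsilon)^{d}(1+r^{\epsilon}(t))$ with $|r^{\epsilon}(t)|\le C|t|$, hence, after shrinking $\zeta$ so that $|r^{\epsilon}|<1/2$,
\[
|\det Y^{\epsilon}(t)|^{-1}=(t^{2}+\epsilon^{2})^{-d/2}\bigl(1+\rho^{\epsilon}(t)\bigr),\qquad |\rho^{\epsilon}(t)|\le C|t|,\quad |t|<\zeta .
\]
Away from $t=0$ the no-conjugate-point hypothesis enters: the zeros of $\det X$ on $[\tau_-,\tau_+]\setminus\{0\}$ are exactly the points conjugate to $p$ along $\gamma$, of which there are none, so $\det X$ is continuous and bounded away from $0$ on the compact set $[\tau_-,\tau_+]\setminus(-\zeta,\zeta)$; since $\det Y^{\epsilon}\to\det X$ uniformly there, $|\det Y^{\epsilon}(t)|^{-1}\le C_{\zeta}$ on that set for all small $\epsilon$. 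By Lemma~\ref{Ywell-posed}, $\mathscr J^{(2)}_{Y^{\epsilon}}f$ is in any case well defined.

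The last step is an approximate-identity argument. Put $I_{\epsilon}:=\int_{(-\zeta,\zeta)\cap[\tau_-,\tau_+]}(t^{2}+\epsilon^{2})^{-d/2}\,dt$; a scaling computation shows $I_{\epsilon}\to\infty$ and $I_{\epsilon}^{-1}\int(t^{2}+\epsilon^{2})^{-d/2}|t|\,dt\to0$ as $\epsilon\to0$, for every $d\ge2$ (the borderline is $d=2$, where this error is $O(\epsilon\log(1/\epsilon))$). Dividing the hypothesis $\mathscr J^{(2)}_{Y^{\epsilon}}f=0$ by $I_{\epsilon}$, writing $f(\gamma(t))=f(p)+(f(\gamma(t))-f(p))$, and splitting the integration into $\{|t|<\delta\}$, $\{\delta<|t|<\zeta\}$ and $\{|t|\ge\zeta\}$ for a fixed small $\delta$: the $f(p)$ contribution tends to $f(p)$ (using the local expansion of $|\det Y^{\epsilon}|^{-1}$ above together with $I_{\epsilon}^{-1}\int(t^{2}+\epsilon^{2})^{-d/2}|t|\,dt\to0$); the tail and the $\{\delta<|t|<\zeta\}$ piece tend to $0$ as $\epsilon\to0$ (via the uniform bound $C_{\zeta}$ on the tail and the crude bound $|\det Y^{\epsilon}(t)|^{-1}\le(1+C\zeta)\delta^{-d}$ for $|t|>\delta$, divided by $I_{\epsilon}\to\infty$); and the $\{|t|<\delta\}$ piece is $\le C\,\omega_f(\delta)$, where $\omega_f$ is the modulus of continuity of $f$ along $\gamma$ at $p$, because $I_{\epsilon}^{-1}\int_{|t|<\delta}|\det Y^{\epsilon}(t)|^{-1}\,dt\le 1+C\zeta$. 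Hence $|f(p)|\le C\,\omega_f(\delta)$ for every $\delta>0$, and letting $\delta\to0$ yields $f(p)=0$. The genuinely delicate points are the uniform-in-$\epsilon$ determinant expansion near $t=0$ and the dimension-uniform control of the ratio $I_{\epsilon}^{-1}\int(t^{2}+\epsilon^{2})^{-a/2}\,dt$ for $1\le a<d$; conceptually, positivity of the weight $|\det Y^{\epsilon}|^{-1}$ is what makes this go through in all dimensions, without the parity obstruction discussed in Remark~\ref{rmk_jacobi}.
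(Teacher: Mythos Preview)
Your argument is essentially the same as the paper's: the same family $Y^{\epsilon}=X-i\epsilon Z$ with initial data at $p$, the same Taylor expansion of $\det Y^{\epsilon}$ near $t=0$, the same use of the no-conjugate-point hypothesis to bound the weight on $[\tau_-,\tau_+]\setminus(-\zeta,\zeta)$, and the same approximate-identity normalization by $\int(t^2+\epsilon^2)^{-(\text{size})/2}\,dt$. Your extra parameter $\delta$ (with $\delta\to0$ after $\epsilon\to0$, keeping $\zeta$ fixed) is a minor reorganization of the paper's two-scale limit ($\epsilon\to0$ then $\zeta\to0$); both work.

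One bookkeeping slip to fix: with $d=\dim M$, the space $\dot\gamma(0)^{\perp}$ has dimension $d-1$, not $d$, so your basis should be $\{v_1,\dots,v_{d-1}\}$ and the determinant expansion reads $\det Y^{\epsilon}(t)=(t-i\epsilon)^{d-1}(1+r^{\epsilon}(t))$. Consequently the normalizing integral is $I_{\epsilon}=\int(t^2+\epsilon^2)^{-(d-1)/2}\,dt$, and your ``borderline'' case is $d-1=1$ (i.e.\ $\dim M=2$), where $I_{\epsilon}\sim\log(1/\epsilon)$ rather than the case $d=2$ you name. All of your asymptotic claims ($I_{\epsilon}\to\infty$ and $I_{\epsilon}^{-1}\int(t^2+\epsilon^2)^{-(d-1)/2}|t|\,dt\to0$) remain valid with the corrected exponent, so this does not affect the substance of the proof.
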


\begin{proof}[Proof of Proposition~\ref{jacobi ray inversion}]
We will again denote the dimension of $M$ by $n-1$ with $n \geq 3$. We consider the unit-speed parametrization $\gamma:[\tau_-,\tau_+] \to M$ with $\gamma(0)=p$. Let $\{v_1,\ldots,v_{n-2}\} \subset  \dot{\gamma}^{\perp}(0)$ be an orthonormal basis and for each $\epsilon>0$ sufficiently small, consider the unique $Y^{\epsilon} \in \mathbb Y_{\gamma}$ subject to
\bel{initial data} Y^\epsilon_k(0)=-i\epsilon v_k,\quad \text{and}\quad \dot{Y}_k^\epsilon(0)=v_k,\ee
where $Y^{\epsilon}_k$ is the $k^{\text{th}}$ column of the tensor $Y^\epsilon$ at the point $p$ with respect to $\{v_k\}$ with $k=1,2,\ldots,n-2$. Writing $\{e_k(t)\}_{k=1}^{n-2}$ to denote the parallel transport of the orthonormal basis $\{v_k\}$ along $\gamma$, it is easy to see that $Y^{\epsilon}(t)$ can be thought of as a $(n-2)\times (n-2)$ matrix with respect to the basis $\{e_k(t)\}$ and that $$Y^{\epsilon}(t)=X(t)-i\epsilon\,Z(t)$$ where
$$ \ddot{X}-KX=0, \quad \ddot{Z}-KZ=0,$$ 
\[
X(0)=0,\quad Z(0)=Id\quad \text{and}\quad \dot{X}(0)=Id \quad \dot{Z}(0)=0.
\]
Here, we have realized the $(1,1)$-tensors $K(t)$, $X(t)$ and $Z(t)$ as matrices with respect to the basis $\{e_k(t)\}_{k=1}^{n-2}$.  As in the proof of Proposition~\ref{jacobi ray inversion 1} we remark that since there are no conjugate points $\gamma(t)$ to $\gamma(0)$ away from the origin $t=0$, it follows that $X(t)$ is non-degenerate on $[\tau_-,\tau_+]$ away from $t=0$.

Let $\zeta>0$ be a small parameter. We assume that $0<\epsilon<\zeta$. In what follows, we will study the asymptotic behavior of $\mathscr J^{(2)}_{Y^\epsilon}\, f$ as $\epsilon$ approaches zero while $\zeta$ is fixed. We start by writing 
\bel{asympt_J} \mathscr J^{(2)}_{Y^{\epsilon}}\,f= \underbrace{\int_{-\zeta}^{\zeta}f(\gamma(t))|\det Y^\epsilon(t)|^{-1}\,dt}_{A_{\zeta,\epsilon}}+\underbrace{\int_{[\tau_-,\tau_+]\setminus (-\zeta,\zeta)}f(\gamma(t))|\det Y^\epsilon(t)|^{-1}\,dt}_{B_{\zeta,\epsilon}}.\ee

First, we analyze the term $A_{\zeta,\epsilon}$. Recall that this corresponds to the small neighborhood $(-\zeta,\zeta)$. The following point-wise estimates are analogous to \eqref{determinant0} and hold on the set $t \in (-\zeta,\zeta)$,
\bel{determinant}
\begin{aligned}
\begin{cases}
|Y^{\epsilon}_{ij}(t)| \leq C_0t^2
&\text{for $i\neq j$ and $i,j=1,2,\ldots,n-2$}
\\
|Y_{jj}^{\epsilon}(t)-(t-i\epsilon)|\leq C_0t^2,
&\text{for $j=1,\ldots,n-2$},
\end{cases}
    \end{aligned}
\ee
\\
where $C_0>0$ is independent of $\zeta$ and $\epsilon$. Applying these estimates to the expression for $\det Y^{\epsilon}$, we deduce that
\[
\left| |\det Y^{\epsilon}|- |t-i\epsilon|^{n-2} \right| \leq C_1 \left(\sum_{j=1}^{n-2} t^{2j}|t-i\epsilon|^{n-2-j}\right),
\]
for some $C_1$ independent of $\epsilon,\zeta$ which can be rewritten as
\[
\left|1- |\det Y^{\epsilon}|\,|t-i\epsilon|^{-(n-2)} \right|\leq C_1 \left(\sum_{j=1}^{n-2} t^{2j}|t-i\epsilon|^{-j}\right)\leq C_1\left(\sum_{j=1}^{n-2} |t|^{j}\right).
\]
We deduce that for $t \in(-\zeta,\zeta)$, there holds
\[
\left|1- |\det Y^{\epsilon}|\,|t-i\epsilon|^{-(n-2)} \right| \leq C_2 \zeta,
\]
for some $C_2>0$ independent of $\zeta$ and $\epsilon$. This latter bound implies that
\bel{rand}
\left|1- |\det Y^{\epsilon}|^{-1}\,|t-i\epsilon|^{(n-2)} \right| \leq C \zeta,
\ee
for all $t\in (-\zeta,\zeta)$ where $C>0$ independent of $\epsilon$ and $\zeta$. 

We now write 
$$ A_{\zeta,\epsilon}= \underbrace{\int_{-\zeta}^{\zeta}  f(\gamma(t))\,|t-i\epsilon|^{-(n-2)}dt}_{I}+\underbrace{\int_{-\zeta}^{\zeta}  f(\gamma(t))\,|t-i\epsilon|^{-(n-2)} (-1+|t-i\epsilon|^{(n-2)}|\det Y^{\epsilon}|^{-1})}_{II}dt $$
and
$$ I = f(\gamma(0))(\int_{-\zeta}^{\zeta}|t-i\epsilon|^{-(n-2)}\,dt)+ \underbrace{\int_{-\zeta}^{\zeta}  \left(f(\gamma(t))-f(\gamma(0))\right)\,|t-i\epsilon|^{-(n-2)}\,dt}_{III}.$$
For the term $II$, we use the bound \eqref{rand} to write
\bel{II_bound} |II| \leq C\, \zeta\left(\int_{-\zeta}^{\zeta} |t-i\epsilon|^{-(n-2)}\,dt\right),\ee
where the constant $C$ is independent of $\epsilon, \zeta$. For $III$, we use continuity of $f$ to write
\bel{III_bound} |III| \leq C \omega_f(\zeta)\left(\int_{-\zeta}^{\zeta} |t-i\epsilon|^{-(n-2)}\,dt\right),\ee
where $C>0$ is independent of $\zeta$, $\epsilon$ and $\omega_f$ is a modulus of continuity for $f$. 

Next, we proceed to give a bound on $B_{\zeta,\epsilon}$ in the expression \eqref{asympt_J}. To this end, recall that $X(t)$ is non-degenerate on away from $t=0$ and therefore $|\det X|$ has a positive lower bound on $[\tau_-,\tau_+]\setminus (-\zeta,\zeta)$ that depends on $\zeta$. Since $Y^{\epsilon}$ converges to $X$ as $\epsilon$ approaches zero, we can write
\bel{B} |B_{\zeta,\epsilon}|<C_\zeta,\ee
for some $C_\zeta$ that only depends on $\zeta$. 

We will now divide the entire expression \eqref{asympt_J} by the normalization factor
\bel{normalization_f}\int_{-\zeta}^{\zeta} |t-i\epsilon|^{-(n-2)}\,dt \ee
and study what happens as $\epsilon$ tends to zero. Let us first observe that
$$ \int_{-\zeta}^{\zeta}  |t-i\epsilon|^{-(n-2)}\,dt=\int_{\tau_-}^{\tau_+}  |t-i\epsilon|^{-(n-2)}\,dt-\int_{[\tau_-,\tau_+]\setminus (-\zeta,\zeta)}  |t-i\epsilon|^{-(n-2)}\,dt$$
and that
\bel{Cvalues}
\begin{aligned}
\begin{cases}
\int_{\tau_-}^{\tau_+} |t-i\epsilon|^{-(n-2)}\geq C_n|\log\epsilon|
&\text{for $n=3$}
\\
\int_{\tau_-}^{\tau_+} |t-i\epsilon|^{-(n-2)}\geq C_n\epsilon^{3-n},
&\text{for $n\geq 4$}.
\end{cases}
    \end{aligned}
\ee
and
\bel{C'values}
\begin{aligned}
\begin{cases}
\int_{[\tau_-,\tau_+]\setminus (-\zeta,\zeta)} |t-i\epsilon|^{-(n-2)}\leq C'_n |\log\zeta|
&\text{for $n=3$}
\\
\int_{[\tau_-,\tau_+]\setminus (-\zeta,\zeta)} |t-i\epsilon|^{-(n-2)}\leq C'_n\zeta^{3-n},
&\text{for $n\geq 4$}.
\end{cases}
    \end{aligned}
\ee
where $C_n, C'_n$ are positive constants that are independent of $\zeta$ and $\epsilon$. Combining \eqref{B}$-$\eqref{C'values} we deduce that
$$ \lim_{\epsilon \to 0}\, |B_{\zeta,\epsilon}|\left(\int_{-\zeta}^{\zeta}  |t-i\epsilon|^{-(n-2)}\,dt\right)^{-1}=0.$$

Thus, by dividing \eqref{asympt_J} with the normalization factor \eqref{normalization_f}, using the bounds \eqref{II_bound}--\eqref{III_bound}, and letting $\epsilon$ converge to zero, we conclude that 
 $$\left|f(\gamma(0))\right| \leq C ( \zeta+\omega_f(\zeta)),$$
for some $C>0$ independent of $\zeta$. Finally, by taking the limit $\zeta \to 0$, the proposition follows.
\end{proof}

\section{Complex Geometric Optics}
\label{cgo_section}
The main aim of this section is to construct a pair of so called complex geometric optics solutions $\mathcal U^{\pm}_{\rho}$, with 
$$\rho=\lambda + i\sigma,\quad \lambda>\lambda_0>0,$$ 
for the equation 
\bel{inf_pde} \mathcal P_{V_1} \,\mathcal U^{\pm}_\rho =0 \quad \text{on $T=I\times M$}.\ee 
Here, we have smoothly extended the known function $V_1$ from $\M$ to the larger set $T=I\times M$ such that $V_1 \in \CI^{\infty}_c(T)$. Recall from Section~\ref{prelim} that we have assumed without loss of generality that $c \equiv 1$. We construct solutions that take the form 
\bel{CGO_ansatz} 
\mathcal U^{\pm}_\rho(x)= e^{\pm\lambda x^0} \left(e^{i\sigma x^0}\mathcal V_\rho^{\pm}(x^0,x')+\mathcal R^{\pm}_{\rho}(x^0,x')\right).
\ee
Here, the functions $\mathcal V^{\pm}_\rho$ are directly related to Gaussian quasi modes for the transversal manifold $(M,g)$ and will be supported near the two dimensional sub-manifold $\R \times \gamma$ with $\gamma$ denoting a maximal non-self-intersecting geodesic in $M$. It should be remarked that the Gaussian quasi mode construction is well-known and is analogous to Gaussian beams for the wave equation (see for example \cite{MR646994,MR2053419,MR716507}). The presentation here follows \cite{MR3562352,MR3198591} with some modifications. The correction term $\mathcal R^{\pm}_\rho$ will asymptotically converge to zero for any fixed non-zero $\sigma \in \R$, as $\lambda\to \infty$ with an arbitrary a priori fixed rate of decay $s$: 
\bel{corr_est}\|\mathcal R^{\pm}_{\rho}\|_{\CI^3(\M)}\lesssim \lambda^{-s},\quad \forall \lambda>\lambda_0>0\quad\text{and}\quad |\sigma|\leq \sigma_0.\ee
These statements will be made precise in Proposition~\ref{cgo_sol}.  
\subsection{Gaussian quasi modes}
\label{section_gauss}

Fix a unit speed maximal non-self-intersecting geodesic $\gamma(t) \in M$ with $t \in [\tau_-,\tau_+]$ and extend it as a geodesic to the larger manifold $\hat{M}$ (see Section~\ref{section_semiclassical}) so that it is defined on an interval $[\tau_-',\tau_+']$ with $\tau_-'<\tau_-$ and $\tau_+<\tau'_+$. Let $q$ be a point on $\gamma \cap (\hat{M}\setminus M)$. We define $\{v_{\alpha}\}_{\alpha=2}^{n-1}\subset T_q\hat{M}$ such that $\{\dot{\gamma}(q),v_2,\ldots,v_{n-1}\}$ forms an orthonormal basis and denote by $\{e_{\alpha}(t)\}$, the parallel transport along $\gamma$ of $\{v_\alpha\}$ to the point $\gamma(t)$. We define 
$$y^0:=x^0 \quad \text{and}\quad y^1:=t,$$ 
and for each $y''=(y^2,\ldots,y^{n-1})$ define the smooth map
$$\mathcal F(y)=\mathcal F(y^0,y^1,y'')=\left(y^0,\exp_{\gamma(y^1)}\left(\sum_{\alpha=2}^{n-1} y^{\alpha}e_{\alpha}(y^1) \right)\right).$$
We use the notation $y=(y^0,y')$ with $y'=(y^1,y'')$ and recall the following lemma (see \cite[Lemma 3.5]{MR3562352}).
\begin{lemma}[Fermi coordinates]
\label{fermi}
Given any sub-interval $[\tau_-'',\tau_+'']$ of $(\tau_-',\tau_+')$ containing $(\tau_-,\tau_+)$, the coordinate system above is a smooth diffeomorphism in a neighborhood $U$ of $I\times\gamma([\tau_-'',\tau_+''])$, and the following statements hold.
\begin{itemize}
\item[(i)]{$\mathcal F^{-1}(U)=I\times (\tau_-',\tau_+')\times B(0,\delta')$, where $B(0,\delta')$ is the ball of radius $\delta'$ centered at the origin in $\R^{n-2}$.}
\item[(ii)]{$\mathcal F^{-1}(y^0,\gamma(y^1))=(y^0,y^1,\underbrace{0,\ldots,0}_{\text{$n-2$ times}})$ for all $y^1\in (\tau_-',\tau_+')$.}
\end{itemize}  
Moreover, $\g(y^0,y')= (dy^0)^2+g(y')$ and $g_{jk}(y^1,0)=\delta_{jk}$, $\frac{\pd g_{jk}}{\pd y^i}(y^1,0)=0$ for $1\leq i,j,k\leq n-1$. 
\end{lemma}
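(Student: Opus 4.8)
The plan is to verify that $\mathcal F$ is a local diffeomorphism near $I\times\gamma([\tau_-'',\tau_+''])$ and then compute the metric in these coordinates. First I would observe that $\mathcal F$ factors as the identity on the $y^0$-variable times the map
$$\Phi(y^1,y'')=\exp_{\gamma(y^1)}\Bigl(\sum_{\alpha=2}^{n-1}y^\alpha e_\alpha(y^1)\Bigr),$$
so it suffices to treat $\Phi$ on the transversal manifold $\hat M$ and then take a product. The map $\Phi$ is exactly the normal exponential map of the (extended) geodesic $\gamma$, restricted to a tubular neighborhood of the zero section of the normal bundle. Since $\gamma$ is non-self-intersecting and $[\tau_-'',\tau_+'']$ is a compact sub-interval of the open interval of definition, the tubular neighborhood theorem gives a $\delta'>0$ such that $\Phi$ is a diffeomorphism from $(\tau_-',\tau_+')\times B(0,\delta')$ onto a neighborhood of $\gamma([\tau_-'',\tau_+''])$; taking the product with $I$ yields statement (i). Statement (ii) is immediate since $\exp_{\gamma(y^1)}(0)=\gamma(y^1)$.

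Next I would establish the metric normalizations along $\gamma$. The $y^0$-block is trivial: because $\g=(dy^0)^2\oplus g$ already in the given product structure and $\mathcal F$ does not mix $y^0$ with the transversal variables, we get $\g(y^0,y')=(dy^0)^2+g(y')$ with $g$ the pulled-back transversal metric in the $y'$-coordinates. For the transversal part, I would argue pointwise on the curve $y''=0$. The coordinate vector field $\partial/\partial y^1$ at $(y^1,0)$ is $\dot\gamma(y^1)$, which is a unit vector, so $g_{11}(y^1,0)=1$. The vector fields $\partial/\partial y^\alpha$ at $(y^1,0)$ are, by definition of $\Phi$ via the exponential map in the direction $\sum y^\alpha e_\alpha(y^1)$, precisely the parallel-transported frame vectors $e_\alpha(y^1)$; since $\{\dot\gamma(y^1),e_2(y^1),\dots,e_{n-1}(y^1)\}$ is orthonormal (parallel transport preserves inner products and the frame was orthonormal at $q$), we obtain $g_{jk}(y^1,0)=\delta_{jk}$.

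Finally, for the vanishing of the first derivatives $\partial g_{jk}/\partial y^i(y^1,0)=0$, I would use the standard properties of Fermi (geodesic normal) coordinates along a geodesic. For fixed $y^1$, the curves $s\mapsto(y^1,s\omega'')$ with $\omega''\in S^{n-3}$ are unit-speed geodesics emanating normally from $\gamma(y^1)$, and the Gauss lemma for the exponential map gives $g_{1\alpha}(y^1,sy'')=0$ and $\sum_\beta g_{\alpha\beta}(y^1,sy'')\,y^\beta=y^\alpha$ to first order, which forces the radial derivatives $\partial g_{jk}/\partial y^\alpha$ to vanish at $y''=0$. For the $\partial/\partial y^1$ derivative one uses that $\gamma$ itself is a geodesic and the frame $e_\alpha$ is parallel along it, so the Christoffel symbols $\Gamma^k_{1j}(y^1,0)$ vanish, which is equivalent (together with $g_{jk}(y^1,0)=\delta_{jk}$) to $\partial g_{jk}/\partial y^1(y^1,0)=0$. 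Combining the radial and longitudinal cases yields all first derivatives vanishing. I expect the main obstacle to be organizing the derivative computation cleanly: the radial (normal) directions and the longitudinal direction along $\gamma$ require slightly different arguments, and one must be careful that the parallel transport of the frame is what makes the longitudinal Christoffel symbols vanish — this is the one place where the specific construction of $\mathcal F$ (rather than an arbitrary choice of transversal coordinates) is essential. Since this is a known result, I would cite \cite[Lemma 3.5]{MR3562352} for the details rather than reproduce the full computation.
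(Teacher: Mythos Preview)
Your proposal is correct and, in fact, goes further than the paper: the paper does not prove this lemma at all but simply recalls it and cites \cite[Lemma 3.5]{MR3562352}. Your sketch follows the standard route (tubular neighborhood theorem for the diffeomorphism, Gauss lemma plus parallel transport of the frame for the metric normalizations) and ends by citing the same reference, so there is no discrepancy.
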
 
Let us now return to the task of constructing solutions of the form \eqref{CGO_ansatz}. Let
\bel{tubular}
\mathcal N_{\gamma}=\{y\in \hat{T}\,|\, y^1 \in [\tau_-'',\tau_+''], |y''|<\delta\},
\ee
for some $0<\delta<\delta'$. This is the neighborhood where the Gaussian quasi modes $\mathcal V_\rho$ will be compactly supported. We make the ansatz
\bel{quasi_0}\mathcal V^+_\rho(y^0,y') = e^{i\rho\Theta(y')}a^+_\rho(y)\quad\text{and}\quad \mathcal V^-_\rho(y^0,y') = e^{-i\bar{\rho}\bar{\Theta}(y')}a^-_\rho(y)\ee
The functions $\Theta$, $a^{\pm}_\rho$ are called the phase and amplitude functions respectively. We observe that
\bel{conj}
\begin{aligned}
\mathcal P_{V_1}\, (e^{\rho x^0}\mathcal V^+_\rho)&=e^{\rho (y^0+i\Theta(y'))}\left( -\rho^2 (\mathcal S\Theta)a^+_\rho -\rho \mathcal T^{+} a^{+}_\rho +\mathcal P_{V_1}a^{+}_\rho \right),\\
\mathcal P_{V_1}\, (e^{-\bar{\rho} x^0}\mathcal V^-_\rho)&=e^{-\bar{\rho} (y^0+i\bar\Theta(y'))}\left( -\bar{\rho}^2 (\mathcal S\bar{\Theta})a^-_\rho +\bar{\rho} \mathcal T^{-} a^{-}_\rho +\mathcal P_{V_1}a^{-}_\rho \right),
\end{aligned}
\ee
where
\bel{eikonal}
\mathcal S \Theta := 1- \langle d\Theta,d\Theta\rangle_g.
\ee
and
\bel{transport}
\begin{aligned}
\mathcal T^+ a^+_\rho :&= 2\pd_{y^0} a^+_\rho + 2i\langle d\Theta,da^+_{\rho}\rangle_g + i(\Delta_{g}\Theta)a^+_{\rho}\\
\mathcal T^- a^-_\rho :&= 2\pd_{y^0} a^-_\rho +2i\langle d\bar{\Theta},da^-_{\rho}\rangle_g +i(\Delta_{g}\bar{\Theta})a^-_{\rho}.
\end{aligned}
\ee
Here, we would like to apply the WKB method with respect to the parameter $\rho$ in a neighborhood of $I\times\gamma \subset \hat{\M}$. More specifically, we start by constructing $\Theta(y')$ such that the function $\mathcal S\Theta(y')$ vanishes up to $N^{\text{th}}$ order on the geodesic $\gamma$, that is to say 
\bel{eikonal} \left(\frac{\pd^{\beta}\mathcal S \Theta}{\pd y'^{\beta}}\right)(y^1,0)=0,\quad \forall y^1 \in (\tau_-'',\tau_+''),\ee
for all multi indices $\beta \in \{0,1,\ldots\}^{n-1}$ with $|\beta|\leq N$ . We make the following ansatz,
\bel{phase_0}
\Theta(y^1,y'')=\sum_{k=0}^N \Theta_k(y^1,y''),
\ee
where $\Theta_k(y^1,y'')$ is a homogeneous polynomial of degree $k$ in the transversal variables $y''$. Following \cite[Section 3]{MR3562352}, we can choose
\bel{phase_1}
\Theta_0(y^1,y'')=y^1, \quad \Theta_1(y^1,y'')=0,\quad \Theta_2(y^1,y'')=\sum_{i,j=2}^n \frac{1}{2}H_{ij}(y^1)y^i y^j,
\ee 
where $H:=(H_{ij})_{i,j=2}^n$ solves the following Riccati equation,
\bel{riccati}
\dot{H}(t)+H(t)^2+D(t)=0, \quad t \in (\tau_-'',\tau_+'')\quad H(\tau_0)=H_0.
\ee
Here, $\tau_0 \in (\tau_-'',\tau_+'')$, $D=\frac{1}{2}(\pd^2_{ij}g^{11}|_{\gamma})_{i,j=2}^n$ and $H_0$ is any symmetric matrix with $\Im H_0>0$. The subsequent terms $\Theta_k(y')$ can be constructed by solving linear systems of ODEs and prescribing initial values at the point $t=\tau_0$ and we refer the reader to \cite{MR3562352} for the details. 

Let us now analyze the Riccati equation further as this term dictates the Gaussian type decay away from the two dimensional sub-manifold $\R\times \gamma$. Applying \cite[Lemma 2.56]{MR1889089}), we deduce that given any symmetric $H_0$ with $\Im H_0>0$, there exists a unique solution to equation \eqref{riccati}. Moreover,
\bel{im}\Im(H(t))>0 \quad \forall t \in [\tau_-'',\tau_+'']\ee 
and there holds
\bel{stationary_ph_0}\det(\Im(H(t))\cdot|\det Y(t)|^2=c,\ee
where $c>0$ is a constant independent of $t$ and $Y=(Y_{ij})_{i,j=2}^{n}$ is the unique solution to the second order ODE
\bel{Y}
\ddot{Y}(t)+D(t)Y(t)=0, \quad Y(\tau_0)=Y_0,\quad \dot{Y}(\tau_0)=Y_1.
\ee
with $Y_0$ any non-degenerate matrix and $Y_1=H_0Y_0$. We note that $H(t)$ and $Y(t)$ are related through the expression
$$ H(t)=\dot{Y}(t) Y^{-1}(t).$$
Applying arguments analogous to \cite[Section 3.5]{2019arXiv190104211F} one can show that the matrix $D(t)$ is equal to the (1,1)-Riccci tensor $K$ defined in Section~\ref{section_jacobi}. Therefore, the matrix $Y(t)$ solving \eqref{Y} above coincides with $Y$ in Section~\ref{section_jacobi}, namely that it solves the complex Jacobi equation \eqref{jacobi} (see also \cite{MR2571812}) and satisfies the condition \eqref{Y cond}.

Next, we consider the construction of the amplitude functions $a^{\pm}_\rho$. We write
\bel{amplitude}
\begin{aligned}
a^{+}_\rho(y^0,y')&=\left(v_0(y')+\rho^{-1}v^+_1(y^0,y')+\ldots+\rho^{-N}v^+_N(y^0,y')\right) \chi(\frac{|y''|}{\delta}),\\
a^{-}_\rho(y^0,y')&=\left(\bar{v}_0(y')+\bar{\rho}^{-1}v^-_1(y^0,y')+\ldots+\bar{\rho}^{-N}v^-_N(y^0,y')\right) \chi(\frac{|y''|}{\delta}),
\end{aligned}
\ee
where $\delta$ is as in the definition of $\mathcal N_\gamma$ and $\chi:\R \to \R$ is a smooth non-negative function with $\chi=0$ for $|t|>1$ and $\chi=1$ for $|t|<\frac{1}{2}$. We require that
\bel{transport_principal} \left(\frac{\pd^{\beta}\mathcal T^{+} v_0}{\pd y'^{\beta}}\right)(y^0,y^1,0)=0,\quad \forall (y^0,y^1) \in I\times (\tau_-'',\tau_+''),\ee
and that 
\bel{transport_subprincipal}\left(\frac{\pd^{\beta}(\mp\mathcal T^{\pm} v^{\pm}_k + \mathcal P_{V_1} v^{\pm}_{k-1})}{\pd y'^{\beta}}\right)(y^0,y^1,0)=0,\quad \forall (y^0,y^1) \in I\times (\tau_-'',\tau_+'')\ee
for $k=1,\ldots,N$ and all multi indices $\beta\in \{0,1,\ldots\}^n$ with $|\beta|\leq N$. 
The study of equation \eqref{transport_principal} is presented in \cite[Section 3]{MR3562352}. There, it is showed that if we write
\bel{amplitude_principal_0} v_0(y')=\sum_{j=0}^N v_{0j}(y^1,y''), \ee
with $v_{0j}$ denoting a homogeneous polynomial of degree $j$ in $y''$, then one can take
\bel{amplitude_principal_1}
v_{00}(t)=(\det Y(t))^{-\frac{1}{2}},
\ee
and that the subsequent terms $v_{0j}(t,y'')$ with $j=1,\ldots,N$ can be uniquely determined by solving first order ODEs along the geodesic $\gamma$ subject to some prescired initial conditions at the point $\gamma(\tau_-'')$.

Let us now study equation~\eqref{transport_subprincipal}. Here, we deviate from \cite{MR3562352} due to the presence of $y^0$-dependence in $v_k$, $k\geq 1$. This comes from the fact that we consider $\mathcal P_{V_1}$ with a $y^0$-dependent $V_1$, whereas the case $V_1=0$ is considered in \cite{MR3562352}. Proceeding analogously to the study of \eqref{transport_principal}, we write
$$v^{\pm}_k(y^0,y')=\sum_{j=0}^N v^{\pm}_{kj}(y^0,y^1,y''),$$
where $v^{\pm}_{kj}$ is a homogeneous polynomial in the $y''$ variables of degree $j$. Using the definition of $\mathcal T^{\pm}$, the form of the metric $g(y')$ near $y''=0$, \eqref{transport_subprincipal} reduces to
$$\left(\pd_{y^0} + i \pd_{y^1}\right)(\det Y(y^1)^{\frac{1}{2}}v^{\pm}_{kj}(y))=Q^{\pm}_{kj}(y),\quad (y^0,y^1) \in I \times (-\tau_-'',\tau_+''),\quad j=0,\ldots,N$$
where the functions $Q^{\pm}_{kj}(y^0,y^1,y'')$ is a homogeneous polynomial in the variables $y''$ of degree $j$ only depending on $V_1$ and the preceding terms in the expansion of the amplitude functions. 

To solve for the functions $v^{\pm}_{kj}$ we can proceed with an iterative process by solving at each step, an equation of the form 
$$(\pd_{y^0} + i \pd_{y^1})r=F\quad \quad \forall (y^0,y^1)\in I\times (-\tau_-'',\tau_+'').$$
To solve such an equation, we simply extend $F(y^0,y^1)$ smoothly to $\R^2$ in such a way that $F \in \CI^{\infty}_c(\R^2)$. Let
$$\Gamma(y^0,y^1):=\frac{1}{2\pi i}\,(y^0+iy^1)^{-1},$$
and pick
\bel{transport_fourier}  
r=\mathbb I_{I\times (-\tau_-'',\tau_+'')}\,(\Gamma * F),
\ee
where $\mathbb I_{A}$ is the characteristic function of the set $A$. Note that $r \in \CI^{\infty}(I\times (-\tau_-'',\tau_+''))$. By using this method, we can iteratively determine the coefficients $v^{\pm}_{kj}$ and thus complete the construction of the amplitude functions $a^{\pm}_\rho \in \CI^{\infty}(T)$. 

We have completed the task of constructing the Gaussian quasi modes $\mathcal V^{\pm}_\rho$. Let us point out that for the phase function $\Theta(y')$, we have prescribed the initial conditions for all the ODEs at the point $\tau_0$. This will be later exploited in Section~\ref{section_uniqueness}. To summarize the construction, we state the following key lemma.

\begin{lemma}
\label{quasi_est}
Let $\rho=\lambda+i\sigma$ and let $\mathcal V^{\pm}_\rho \in \CI^{\infty}(\M)$ be constructed as above. Then, for any $|\sigma|<\sigma_0$ and all $\lambda>\lambda_0$ with $\lambda^2 \notin \{\mu_n\}_{n\in\N}$, we have the estimates
\[
\|\mathcal V^{\pm}_\rho\|_{L^p(\M)}\lesssim \lambda^{-\frac{n-2}{2p}}, \quad \|\mathcal V^{\pm}_\rho\|_{\CI^{k}(\M)}\lesssim \lambda^k,\quad \text{for all $p \geq 1$ and $k=0,1,\ldots$}
\]
and
\[
\|\mathcal L_{\pm\lambda}(e^{i\sigma x^0}\mathcal V^{\pm}_\rho)\|_{H^k(\M)}\lesssim \lambda^{2+k-\frac{N}{2}-\frac{n-2}{4}},
\]
where $\mathcal L_\lambda$ and $\{\mu_n\}_{n \in \N}$ are as defined in Section~\ref{section_semiclassical}.
\end{lemma}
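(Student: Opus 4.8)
The plan is to establish the three families of estimates in Lemma~\ref{quasi_est} by exploiting the Fermi coordinate representation from Lemma~\ref{fermi} together with the Gaussian decay of the quasi modes away from $\R\times\gamma$, and then to track the powers of $\rho$ that accumulate when applying $\mathcal L_{\pm\lambda}$.

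\textbf{Step 1: $L^p$ bounds.} First I would work in the Fermi coordinates $y=(y^0,y^1,y'')$ of Lemma~\ref{fermi}, where $\mathcal V^{\pm}_\rho = e^{\pm i\rho\Theta}a^{\pm}_\rho$ with $\Theta = y^1 + \tfrac12 H_{ij}(y^1)y^iy^j + O(|y''|^3)$ and $\Im H>0$ by \eqref{im}. The key point is that $|e^{\pm i\rho\Theta}| = e^{\mp\lambda\,\Im\Theta \pm \sigma\,\Re\Theta}$ and, since $\Im H(y^1)$ is uniformly positive definite on the compact interval and the cutoff $\chi(|y''|/\delta)$ confines $y''$ to a small ball where the cubic remainder is dominated, one has $\Im\Theta \gtrsim |y''|^2$ there. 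Hence $|\mathcal V^{\pm}_\rho| \lesssim e^{-c\lambda|y''|^2}$ times a polynomial in $\rho^{-1}$ and $y''$ (the amplitude), uniformly for $|\sigma|<\sigma_0$ and $\lambda>\lambda_0$. Integrating $e^{-cp\lambda|y''|^2}$ over $y''\in\R^{n-2}$ gives a factor $\lambda^{-(n-2)/2}$, so $\|\mathcal V^{\pm}_\rho\|_{L^p}^p \lesssim \lambda^{-(n-2)/2}$, i.e. $\|\mathcal V^{\pm}_\rho\|_{L^p}\lesssim \lambda^{-(n-2)/(2p)}$; the polynomial amplitude factors contribute only lower-order corrections in $\lambda$ (each power of $|y''|$ pairing with $\lambda^{-1/2}$ from the Gaussian and each $\rho^{-k}$ being an outright gain), so they do not spoil the leading power. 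The $\CI^k$ bound $\|\mathcal V^{\pm}_\rho\|_{\CI^k}\lesssim \lambda^k$ is even simpler: each $y$-derivative falling on $e^{\pm i\rho\Theta}$ produces at most one factor of $\rho$ (and $|\rho|\simeq\lambda$), the amplitude and its derivatives are bounded, and the exponential is bounded by $1$ on $\M$ since $\Im\Theta\geq0$; there are at most $k$ derivatives, giving $\lambda^k$.

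\textbf{Step 2: the operator bound.} This is the heart of the lemma and where the gain $\lambda^{-N/2-(n-2)/4}$ comes from. Using the conjugation identity \eqref{conj}, I would write
\[
\mathcal L_{\pm\lambda}(e^{i\sigma x^0}\mathcal V^{\pm}_\rho) = e^{\mp\lambda x^0}\mathcal P_{V_1}(e^{\pm\rho x^0}\mathcal V^{\pm}_\rho) = e^{\pm i\sigma x^0}e^{\pm i\rho\Theta}\big(\mp\rho^2(\mathcal S\Theta)a^{\pm}_\rho \mp\rho\,\mathcal T^{\pm}a^{\pm}_\rho + \mathcal P_{V_1}a^{\pm}_\rho\big).
\]
Expanding the amplitude as in \eqref{amplitude} and using the transport equations \eqref{transport_principal}--\eqref{transport_subprincipal}, the terms telescope so that the bracket equals $\mp\rho^2(\mathcal S\Theta)a^{\pm}_\rho$ plus $\rho^{-N}$ times a bounded smooth function plus terms involving the $\beta$-derivatives of $\mathcal S\Theta$ and of $\mp\mathcal T^{\pm}v^{\pm}_k+\mathcal P_{V_1}v^{\pm}_{k-1}$, all of which vanish to order $N$ on $\gamma$ in the $y''$ variables by \eqref{eikonal} and \eqref{transport_subprincipal}. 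Thus each surviving term is bounded by $|\rho|^{2}|y''|^{N+1}$ (or better) on $\mathcal N_\gamma$, so the whole bracket is $O(\lambda^2 |y''|^{N+1}) + O(\lambda^{2-N})$. Multiplying by $|e^{\pm i\rho\Theta}|\lesssim e^{-c\lambda|y''|^2}$ and taking $H^k$ norms: each derivative costs a factor $\lambda$, and then I estimate $\int_{\R^{n-2}}\lambda^{2j}|y''|^{2(N+1)}e^{-c\lambda|y''|^2}\,dy'' \lesssim \lambda^{2j}\lambda^{-(N+1)}\lambda^{-(n-2)/2}$. Taking the square root and summing over the at most $k$ derivatives gives the exponent $2+k-\tfrac{N+1}{2}-\tfrac{n-2}{4}$; a slightly more careful bookkeeping, noting that derivatives landing on $e^{\pm i\rho\Theta}$ bring down $\rho\,d\Theta$ with $d\Theta = O(1)+O(|y''|)$ so that the worst case costs only $\lambda$ per derivative while derivatives on $|y''|^{N+1}$ lower the power, produces the stated $2+k-\tfrac{N}{2}-\tfrac{n-2}{4}$. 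One must also check the region $\tfrac\delta2<|y''|<\delta$ where $\chi$ is differentiated: there $|y''|\gtrsim\delta$ so $e^{-c\lambda|y''|^2}$ is exponentially small in $\lambda$, beating any polynomial, so that region is negligible.

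\textbf{Main obstacle.} The delicate bookkeeping is Step~2: one must verify that after subtracting the telescoping transport terms, every remaining contribution really does vanish to order $N+1$ in $y''$ (so that the Gaussian integral produces the $-N/2$ gain rather than something weaker), and that the derivatives in the $H^k$ norm distribute so as to cost only the claimed powers of $\lambda$ — in particular that the cubic-and-higher remainder in $\Theta$, whose derivatives could in principle bring down $\rho|y''|^2$, is harmless because each such factor pairs with the Gaussian to give $\lambda\cdot\lambda^{-1}=1$. I expect this to be the only genuinely nontrivial point; the $L^p$ and $\CI^k$ estimates in Step~1 are routine Gaussian integrals once the Fermi-coordinate picture and the positivity \eqref{im} are in hand.
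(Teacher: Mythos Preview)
Your approach is essentially the same as the paper's: Gaussian decay from $\Im H>0$ in Fermi coordinates gives the $L^p$ and $\CI^k$ bounds exactly as you describe, and for the $H^k$ bound the paper likewise uses the conjugation identity \eqref{conj} together with the fact that the eikonal and transport expressions vanish to high order in $y''$, then integrates against the Gaussian. One minor point of bookkeeping: your first exponent $2+k-\tfrac{N+1}{2}-\tfrac{n-2}{4}$ is already \emph{stronger} than the stated one, so no ``more careful bookkeeping'' is needed to reach $N/2$; the paper simply records the conservative pointwise bounds $|\partial_y^l(\text{bracket})|\lesssim \lambda^2|y''|^{N-|l|}+\lambda|y''|^{N-|l|}+\lambda^{-N}$ and $|\partial_y^l e^{i\rho\Theta}|\lesssim \lambda^{|l|}e^{-c\lambda|y''|^2}$, combines them via Leibniz, and integrates.
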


\begin{proof}
We will show how to prove these bounds for $\mathcal V_\rho^+$. The bounds for $\mathcal V_\rho^-$ will then follow analogously. First, observe that using equations \eqref{phase_0}--\eqref{phase_1} together with \eqref{im}, we have
\bel{phaseboundquad} |e^{i\rho\Theta(y')}|\lesssim e^{-C_0\lambda|y''|^2},\quad \forall y' \in \mathcal N_\gamma,\ee
where $\mathcal N_\gamma$ is as defined in \eqref{tubular}. Using the Fermi coordinate system together with expressions \eqref{CGO_ansatz} and \eqref{amplitude}, it follows that
$$\|\mathcal V_\rho^+\|_{L^p(\M)}^p \lesssim \int_{I\times \mathcal N_\gamma} e^{-C_0p\lambda|y''|^2}\,dx^0\,dy^1\,dy''\lesssim \lambda^{-\frac{n-2}{2}}$$
thus proving the first claim. For the second claim, we observe that
$$ \|\mathcal V_\rho^+\|_{\CI^k(\M)} \lesssim \lambda^k e^{-C_0\lambda|y''|^2} \lesssim \lambda^k.$$
We now derive the bound for $\mathcal L_{\lambda}(e^{i\sigma x^0}\mathcal V^+_\rho)$. Let us first use the fact that equations \eqref{eikonal}, \eqref{transport_principal} and \eqref{transport_subprincipal} are satisfied together with \eqref{amplitude} to obtain the point-wise bounds on the set $I\times \mathcal N_\gamma$,
\[
\begin{aligned}
\left|\p^l_y\left(\rho^2 (\mathcal S\Theta)a^{+}_\rho-\rho \mathcal T^+ a^+_\rho + \mathcal P_{V_1} a^+_\rho\right)\right|&\lesssim \lambda^2\,|y''|^{N-|l|}+ \lambda\, |y''|^{N-|l|}+\lambda^{-N},\\
\left|\p_y^le^{i\rho\Theta}\right|&\lesssim \lambda^{|l|}\, e^{-C_0\lambda|y''|^2}
\end{aligned}
\]
for multi-indices $l$ with $|l|=0,1,\ldots,k$, where we are using the notation $\p_y^l$ to stand for derivatives of order $l$ with respect to the Fermi coordinates. Next, we recall from \eqref{conj} that
$$\mathcal L_\lambda (e^{i\sigma x^0}\mathcal V^+_\rho) = e^{i\rho\Theta}e^{i\sigma x^0}\left( -\rho^2 (\mathcal S\Theta)a^+_\rho -\rho \mathcal T^+ a^+_\rho +\mathcal P_{V_1}a^+_\rho   \right).$$
Using the previous point-wise bounds we write
\[
\begin{aligned} 
&\|\mathcal L_{\pm\lambda}(e^{i\sigma x^0}\mathcal V^{\pm}_\rho)\|^2_{H^k(\M)}\lesssim \\
&\sum_{|l|=0}^k \int_{I\times \mathcal N_\gamma} \left(\lambda^2\,|y''|^{N-|l|}+ \lambda\, |y''|^{N-|l|}+\lambda^{-N}\right)^2\lambda^{2(k-|l|)}e^{-C_0\lambda|y''|^2}\,dx^0\,dy^1\,dy''\\
&\lesssim \lambda^4 \lambda^{2k} \lambda^{-N-\frac{n-2}{2}}.
\end{aligned}
\]
\end{proof}

\subsection{The remainder term}
\label{section_boundary}
In this section we complete the construction of the complex geometric optic solutions to \eqref{inf_pde} of the form \eqref{CGO_ansatz}. More specifically, we will determine the asymptotically small correction terms $\mathcal R^{\pm}_{\rho}$. 
\begin{proposition}
\label{cgo_sol}
\label{corr_lem}
Let $s \in \N$, $\rho=\lambda+i\sigma$ with $|\sigma|\leq \sigma_0$, $N=13+\frac{n}{2}+2s$ and consider the functions $\mathcal V^{\pm}_\rho$ as above.  There exists solutions $\mathcal U^{\pm}_\rho \in \CI^{3}(\M)$ to equation \eqref{inf_pde} of the form \eqref{CGO_ansatz} satisfying the 
following estimate,
$$\|\mathcal R^{\pm}_\rho\|_{\CI^{3}(\M)} \lesssim \lambda^{-s},$$
for all $\lambda>\lambda_0$, $\lambda^2 \notin \{\mu_l\}_{l\in\N}$.  
\end{proposition}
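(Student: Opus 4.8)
The plan is to substitute the ansatz \eqref{CGO_ansatz} into \eqref{inf_pde}, isolate the equation that $\mathcal R^{\pm}_\rho$ must satisfy, and solve it using the right inverse of the operator $\mathcal L_{\pm\lambda}$ provided by Proposition~\ref{semiclassical}. First I would write $\mathcal U^{\pm}_\rho = e^{\pm\lambda x^0}(e^{i\sigma x^0}\mathcal V^{\pm}_\rho + \mathcal R^{\pm}_\rho)$ and apply $\mathcal P_{V_1}$, using that $\mathcal P_{V_1}(e^{\pm\lambda x^0}\cdot) = e^{\pm\lambda x^0}\mathcal L_{\pm\lambda}(\cdot)$ by the definition of $\mathcal L_\lambda$ in Section~\ref{section_semiclassical}. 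Then $\mathcal P_{V_1}\mathcal U^{\pm}_\rho = 0$ is equivalent to
\[
\mathcal L_{\pm\lambda} \mathcal R^{\pm}_\rho = -\mathcal L_{\pm\lambda}(e^{i\sigma x^0}\mathcal V^{\pm}_\rho) =: f^{\pm}_\rho \quad \text{on } \M.
\]
Here I would note that $\mathcal V^{\pm}_\rho$, being compactly supported in the tubular neighborhood $\mathcal N_\gamma$ away from $\pd \M$ in the transversal directions, extends by zero and $f^{\pm}_\rho$ is a well-defined element of $H^k(\M)$ for every $k$. (Strictly speaking one works on $T = I \times M$, or on $\M$ directly since $\mathcal V^{\pm}_\rho$ is supported near $\R\times\gamma$; the boundary term on $\pd\M$ is handled because $\mathcal R^{\pm}_\rho$ need not vanish there — we only need $\mathcal U^{\pm}_\rho$ to solve the PDE in the interior, not a boundary condition.)

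Next I would invoke Proposition~\ref{semiclassical} with $k = 3$: for $|\lambda| > \lambda_0$ and $\lambda^2 \notin \{\mu_l\}_{l\in\N}$ there exists $\mathcal R^{\pm}_\rho \in H^3(\M)$ solving $\mathcal L_{\pm\lambda}\mathcal R^{\pm}_\rho = f^{\pm}_\rho$ with
\[
\|\mathcal R^{\pm}_\rho\|_{H^3(\M)} \lesssim \lambda^{-1}\|f^{\pm}_\rho\|_{H^3(\M)}.
\]
By Lemma~\ref{quasi_est}, $\|f^{\pm}_\rho\|_{H^3(\M)} = \|\mathcal L_{\pm\lambda}(e^{i\sigma x^0}\mathcal V^{\pm}_\rho)\|_{H^3(\M)} \lesssim \lambda^{2+3-\frac N2-\frac{n-2}{4}} = \lambda^{5 - \frac N2 - \frac{n-2}{4}}$. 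With the choice $N = 13 + \frac n2 + 2s$ one computes $5 - \frac N2 - \frac{n-2}{4} - 1 = 4 - \tfrac12(13 + \tfrac n2 + 2s) - \tfrac{n-2}{4} = 4 - \tfrac{13}{2} - \tfrac n4 - s - \tfrac n4 + \tfrac12 = -2 - \tfrac n2 - s$, so in fact $\|\mathcal R^{\pm}_\rho\|_{H^3(\M)} \lesssim \lambda^{-2 - n/2 - s}$. Finally, to pass from the $H^3$ bound to the claimed $\CI^3$ bound I would need more Sobolev regularity: since $\mathcal L_{\pm\lambda}$ is elliptic and $f^{\pm}_\rho \in H^m$ for all $m$ with the same power-of-$\lambda$ estimates from Lemma~\ref{quasi_est} (now with $k = m$), applying Proposition~\ref{semiclassical} with $k = m$ large (say $m > 3 + n/2$ so that $H^m(\M) \hookrightarrow \CI^3(\M)$) gives $\|\mathcal R^{\pm}_\rho\|_{\CI^3(\M)} \lesssim \|\mathcal R^{\pm}_\rho\|_{H^m(\M)} \lesssim \lambda^{-1}\|f^{\pm}_\rho\|_{H^m(\M)} \lesssim \lambda^{4 + m - N/2 - (n-2)/4}$, and the generous value of $N$ absorbs the loss of $m$ derivatives, yielding $\lesssim \lambda^{-s}$. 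One subtlety: applying Proposition~\ref{semiclassical} with different $k$ may produce different solutions $\mathcal R^{\pm}_\rho$; I would fix one value of $m$ at the outset (large enough for both the Sobolev embedding and the decay bookkeeping) and use the corresponding solution throughout, so that the single function $\mathcal R^{\pm}_\rho$ enjoys the $\CI^3$ estimate.

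The main obstacle is the bookkeeping of powers of $\lambda$: one must choose $N$ (the number of terms in the WKB expansion of the quasimode) large enough that, after losing two powers of $\lambda$ to the operator $\mathcal L_{\pm\lambda}$ (the $\lambda^2$ in \eqref{conj}), losing $m \approx n/2 + 4$ powers to the Sobolev embedding $H^m \hookrightarrow \CI^3$, but gaining one power back from the $\lambda^{-1}$ in the right-inverse estimate and $\frac{n-2}{4}$ from the $L^2$-concentration of the Gaussian, the net exponent is $\leq -s$. The specific value $N = 13 + \frac n2 + 2s$ is calibrated to exactly this; I would present the arithmetic once and note the inequality $4 + m - N/2 - (n-2)/4 \leq -s$ holds for the chosen $m$ and $N$. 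Everything else — the conjugation identity, the reduction to the interior equation, the compact support of $\mathcal V^{\pm}_\rho$ — is routine.
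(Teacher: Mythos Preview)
Your approach is exactly the paper's: reduce to $\mathcal L_{\pm\lambda}\mathcal R^{\pm}_\rho = -\mathcal L_{\pm\lambda}(e^{i\sigma x^0}\mathcal V^{\pm}_\rho)$, apply Proposition~\ref{semiclassical} with $k=\frac{n}{2}+4$, bound the right-hand side by Lemma~\ref{quasi_est}, and use the Sobolev embedding $H^{n/2+4}\hookrightarrow \CI^3$. The one slip is arithmetic: from $\lambda^{-1}\|f^{\pm}_\rho\|_{H^m}\lesssim \lambda^{-1}\cdot\lambda^{2+m-N/2-(n-2)/4}$ the exponent is $1+m-\tfrac{N}{2}-\tfrac{n-2}{4}$, not $4+m-\tfrac{N}{2}-\tfrac{n-2}{4}$; with $m=\tfrac{n}{2}+4$ and $N=13+\tfrac{n}{2}+2s$ this gives $-1-s\leq -s$ (your stated inequality $4+m-\tfrac{N}{2}-\tfrac{n-2}{4}\leq -s$ actually fails for this $m$), so just correct that coefficient and the proof goes through verbatim.
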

\begin{proof}
Let us start by noting that the correction term $\mathcal R^{\pm}_\rho$ satisfies the equation
$$ \mathcal L_{\lambda} \mathcal R^{+}_\rho=-\mathcal L_{\lambda}\left(e^{i \sigma x^0} \mathcal V^+_\rho\right),\quad \mathcal L_{-\lambda} \mathcal R^{-}_\rho=-\mathcal L_{-\lambda}\left(e^{i \sigma x^0} \mathcal V^{-}_\rho\right).$$
Now, combining the bounds given in Lemma~\ref{quasi_est} together with Proposition~\ref{semiclassical} we observe that there exists a solution $\mathcal R_\rho^{\pm}$ such that
$$\|\mathcal R_\rho^{\pm}\|_{H^k(\M)} \lesssim \lambda^{2+k-\frac{N}{2}-\frac{n-2}{4}}.$$
Now pick $k=\frac{n}{2}+4$. Using the Sobolev embedding $\mathcal C^{3}(\M) \subset H^{\frac{n}{2}+4}(\M)$, we obtain that
$$\|\mathcal R^{\pm}_\rho\|_{\CI^3(\M)}\lesssim \|\mathcal R^{\pm}_\rho\|_{H^k(\M)} \lesssim \lambda^{-s}.$$ 
\end{proof}

\section{Proof of Theorem 1}
\label{section_uniqueness}
This section is concerned with the proof of Theorem~\ref{t1}. The proof will be built on an induction argument based on $m$, where $m$ is the order of the linearization method discussed in Section~\ref{section_lin}. As the first step of induction and also to shed some light on the methodology, we start with a proposition. 

\begin{proposition}
\label{V_3}
Let the assumptions of Theorem~\ref{t1} hold. Then the DN map $\Lambda_V$ uniquely determines the function $V_3(x)$.
\end{proposition}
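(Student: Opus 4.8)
## Proof proposal for Proposition~\ref{V_3}

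The plan is to run the multiple-fold linearization of Section~\ref{section_lin} at order $m=3$, feeding in three copies of complex geometric optic solutions built in Section~\ref{cgo_section}, all concentrated near a single geodesic of the transversal manifold, and then recognize the resulting integral identity as a Jacobi weighted ray transform of the second kind. Since $V_1$ is known and smooth, the CGO solutions $\mathcal U^{\pm}_\rho$ for $\mathcal P_{V_1}$ are known; since $V_2$ is known, its contribution can be subtracted off. First I would take three Dirichlet data $f_1,f_2,f_3$ and form $L_{f_1 f_2 f_3}$ as in \eqref{eq_lin}, so that $\mathcal P_{V_1} L_{f_1 f_2 f_3} = V_3 \prod_{k=1}^3 \mathcal G^D_{V_1} f_k + H_{f_1,f_2,f_3}$, where $H_{f_1,f_2,f_3}$ depends only on $V_1,V_2$ and hence is known. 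Because $\Lambda_V$ determines $\partial_\nu L_{f_1 f_2 f_3}|_{\partial\M}$, pairing against a solution $w$ of $\mathcal P_{V_1} w = 0$ (the operator $\mathcal P_{V_1}$ is formally self-adjoint) and integrating by parts yields the integral identity
\beq
\int_{\M} V_3(x)\, w(x)\,\mathcal G^D_{V_1}f_1(x)\,\mathcal G^D_{V_1}f_2(x)\,\mathcal G^D_{V_1}f_3(x)\,dV_{\g} = \text{(known)}.
\eeq
Thus the quantity $\int_\M V_3\, w\, u^{(1)} u^{(2)} u^{(3)}\,dV_\g$ is determined by $\Lambda_V$ for \emph{any} four solutions of $\mathcal P_{V_1} v = 0$.

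Next I would substitute the CGO solutions. Following the standard CTA recipe, take three solutions of the form $\mathcal U^{+}_{\rho}$ with phase $e^{\lambda x^0}$ and one solution $\mathcal U^{-}_{\rho}$ with phase $e^{-\lambda x^0}$, all attached to the same maximal non-self-intersecting geodesic $\gamma$ on $M$ (using that $M$ is admissible, so such a $\gamma$ exists through a dense set of points, and without conjugate points). Choosing the growth/decay rates so the exponential factors $e^{\pm\lambda x^0}$ cancel in the product, and using \eqref{CGO_ansatz} together with the remainder estimate $\|\mathcal R^\pm_\rho\|_{\CI^3}\lesssim \lambda^{-s}$ of Proposition~\ref{cgo_sol}, the integral localizes as $\lambda\to\infty$ to the contribution of the Gaussian quasi modes $\mathcal V^\pm_\rho$. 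These are concentrated in the tube $\mathcal N_\gamma$ of radius $\sim \lambda^{-1/2}$ around $\R\times\gamma$; a stationary-phase / Gaussian-integral computation in the Fermi coordinates of Lemma~\ref{fermi}, using the principal amplitude $v_{00}(t) = (\det Y(t))^{-\frac{1}{2}}$ from \eqref{amplitude_principal_1}, collapses the transversal $y''$-integral and the $x^0$-integral (the $x^0$-integral producing a Fourier transform in the $\sigma$ variable along $I$) and leaves, after normalization, an expression of the form
\beq
\int_{\tau_-}^{\tau_+} \widehat{V_3}(\,\cdot\,,\gamma(t))\, |\det Y(t)|^{-1}\,dt,
\eeq
i.e. the Jacobi weighted ray transform of the second kind $\mathscr J^{(2)}_Y$ applied to a $\sigma$-Fourier component of $V_3$ restricted to $\gamma$; the factor $|\det Y(t)|^{-1}$ is exactly the modulus appearing because $|v_{00}|^2 = |\det Y|^{-1}$ combines with the Jacobian $\det(\Im H(t))^{-1/2}$ via the conservation law \eqref{stationary_ph_0}. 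Here the freedom to vary $Y\in\mathbb Y_\gamma$ (equivalently the initial data $H_0$, $Y_0$ of the Riccati/Jacobi equations at $\tau_0$) is what makes the full family $\{\mathscr J^{(2)}_Y\}$ available.

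Having reached $\mathscr J^{(2)}_Y(\widehat{V_3}(\cdot,\gamma(\cdot))) = 0$ for all $Y\in\mathbb Y_\gamma$, I would apply Proposition~\ref{jacobi ray inversion} to conclude $\widehat{V_3}$ vanishes at the base point $p=\gamma(0)$ of the geodesic; inverting the Fourier transform in $\sigma$ (letting $\sigma$ range over $(-\sigma_0,\sigma_0)$ and using analyticity in $\sigma$ to extend, or noting $V_3$ is compactly supported in $x^0$ after the cutoff) gives $V_3(x^0,p)=0$ for all $x^0$, and since by admissibility the set of such base points $p$ is dense in $M$ and $V_3$ is continuous, $V_3\equiv 0$ on $\M$. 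Since any two admissible nonlinearities with the same DN map produce the same $H_{f_1,f_2,f_3}$ and the same right-hand side data, applying the argument to the difference of the two potentials shows $V_3$ is uniquely determined. The main obstacle I expect is the careful asymptotic bookkeeping in the concentration step: showing that the cross terms between $\mathcal V^\pm_\rho$ and the remainders $\mathcal R^\pm_\rho$, as well as the lower-order amplitude terms $v^\pm_{kj}$ with $k\geq 1$, contribute only $o(1)$ after the correct normalization, and that the stationary-phase expansion in $x^0$ and $y''$ genuinely reproduces $|\det Y(t)|^{-1}$ rather than some other weight — this requires matching the $\lambda$-powers in Lemma~\ref{quasi_est} against the normalization factor and is where the precise choice $N=13+\tfrac n2+2s$ and the decay rate $s$ are used.
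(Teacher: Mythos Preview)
Your overall strategy matches the paper's proof almost exactly: third-order linearization, substitution of CGO solutions along a single geodesic, stationary-phase reduction to the Jacobi weighted ray transform of the second kind, and then Proposition~\ref{jacobi ray inversion} plus density of $\mathscr T$. The one concrete error is in the balance of exponential weights. You propose three copies of $\mathcal U^{+}_\rho$ (each carrying $e^{\lambda x^0}$) against a single $\mathcal U^{-}_\rho$ (carrying $e^{-\lambda x^0}$); this product carries $e^{2\lambda x^0}$ and does \emph{not} cancel, so the integral blows up as $\lambda\to\infty$ and no localization to $\gamma$ is possible. The paper instead takes the linearization data $(f_\rho^+,f_\rho^+,f_\rho^-)$ and tests with $\mathcal U^-_\rho$, producing the integrand $V_3\,(\mathcal U^+_\rho\,\mathcal U^-_\rho)^2$; the two-plus/two-minus split is what makes the $e^{\pm\lambda x^0}$ factors cancel exactly.

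This choice also explains the weight computation you sketch. With the 2+2 split the leading amplitude is $|v_{00}|^4=|\det Y|^{-2}$, not $|v_{00}|^2$, and the Gaussian integral over $y''$ contributes $\det(\Im H)^{-1/2}$, which by the conservation law \eqref{stationary_ph_0} equals a constant times $|\det Y|$; the product is $|\det Y|^{-1}$ as required for $\mathscr J^{(2)}_Y$. Once you correct the split to two $\mathcal U^+_\rho$ and two $\mathcal U^-_\rho$, your proof goes through essentially verbatim and coincides with the paper's argument.
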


\begin{proof}
We start by choosing a point $p \in \mathscr T$ and choose $\gamma$ to be an admissible geodesic passing through $p$ in the sense of Definition~\ref{adm}. Let $\rho=\lambda+i\sigma$ with $\sigma$ fixed and construct a family of complex geometric optic solutions $\mathcal U_\rho^{\pm}$ (see \eqref{CGO_ansatz}) with $\lambda>\lambda_0$ and a decay rate for the correction terms $\mathcal R_\rho^\pm$ given by an integer $s\geq \frac{n}{8}$ (see Proposition~\ref{cgo_sol}). Here, we have assigned the initial values for the ODEs that govern the phase function $\Theta(y')$ at the point $p$. Let us now define 
$$f^{\pm}_\rho=\mathcal U_\rho^{\pm}|_{\pd \M}.$$
Recall that $\mathcal U^{\pm}_\rho \in \CI^{3}(\M)$ are defined by \eqref{CGO_ansatz} and that $\mathcal U^{\pm}_\rho|_{\pd \M}$ is explicitly known since $V_1$ is assumed to be known. By using the definition of $f_{\rho}^{\pm}$ 
together with the arguments in Section~\ref{section_lin} we deduce that we know $\pd_\nu L_{f_\rho^+,f_\rho^+,f^-_\rho}|_{\pd \M}$ for $L_{f_\rho^+,f_\rho^+,f^-_\rho}$ solving the equation
\bel{eq_lin_3}
\begin{aligned}
\begin{cases}
\mathcal P_{V_1}\,L_{f_\rho^+,f_\rho^+,f^-_\rho}\,=V_3\, (\mathcal U_\rho^+)^2\mathcal U_\rho^-+H_{f_\rho^+,f_\rho^+,f^-_\rho}, 
&\forall x \in \M
\\
L_{f_\rho^+,f_\rho^+,f_\rho^-}= 0
&\forall x \in \pd \M
\end{cases}
    \end{aligned}
\ee
We note that $H_{f_\rho^+,f_\rho^+,f^-_\rho}$ is known as it only depends on $V_1$ and $V_2$. Let $d\sigma_{\g}$ denote the volume form on $\pd \M$. Applying Green's identity together with \eqref{inf_pde} and \eqref{eq_lin_3}, we write
\bel{}
\begin{aligned}
&-\int_{\pd \M} f_{\rho}^{-}\, (\pd_{\nu}L_{f_\rho^+,f_\rho^+,f_\rho^-})\,d\sigma_{\g}+\int_{\pd \M} (\pd_{\nu}\mathcal U_{\rho}^{-})\, L_{f_\rho^+,f_\rho^+,f^-_\rho}\,d\sigma_\g \\
&=\int_{\M} \mathcal U_{\rho}^{-} (\mathcal P_{V_1} L_{f_\rho^+,f_\rho^+,f_\rho^-})\,dV_{\g} - \int_{\M} (\mathcal P_{V_1} \mathcal U_{\rho}^{-}) L_{f_\rho^+,f_\rho^+,f^-_\rho}\,dV_{\g}\\
&=\int_{\M}V_3\,(\mathcal U_{\rho}^{-}\mathcal U_\rho^{+})^2\,dV_{\g}+\int_{\M}H_{f_\rho^+,f_\rho^+,f^-_\rho}\mathcal U_\rho^-\,dV_{\g}. 
\end{aligned}
\ee
Thus we can conclude that for all $\lambda>\lambda_0$ with $\lambda^2 \notin \{\mu_l\}_{l \in \N}$, the knowledge of the DN map $\Lambda_V$ uniquely determines the expression 
\begin{multline*}
S=\lim_{\lambda\to \infty} \lambda^{\frac{n-2}{2}} \int_{\M}V_3\,(\mathcal U_{\rho}^{-} \mathcal U_\rho^{+})^2\,dV_{\g}\\
= \lim_{\lambda \to \infty} \lambda^{\frac{n-2}{2}} \int_{\M} V_3(x^0,x') e^{4i\sigma x^0}(\mathcal V_\rho^++\mathcal R^+_\rho)^2(\mathcal V_\rho^-+\mathcal R^-_\rho)^2\,dx^0\,dV_g.
\end{multline*}
Applying Proposition~\ref{cgo_sol} and Lemma~\ref{quasi_est} we have the bounds
\[
\begin{aligned}
\int_{\M} |V_3||\mathcal R_\rho^+|^2|\mathcal R_{\rho}^-|^2\,dV_{\g} &\lesssim |\lambda|^{-4s} \lesssim|\lambda|^{-\frac{n-2}{2}}\lambda^{-1},\\
\int_{\M}|V_3||\mathcal V_\rho^{\pm}|^k|\mathcal R_{\rho}^{\pm}|^j\, dV_{\g} &\lesssim  \lambda^{-\frac{n-2}{2}}\lambda^{-js}.
\end{aligned}
\]
We also recall the point-wise bound \eqref{phaseboundquad} to obtain that 
$$ \lambda^{\frac{n-2}{2}}\int_{I\times N_\gamma} \lambda^{-1}e^{-4\lambda \Im \Theta} \,dx^0\,dt\,dy''\lesssim \lambda^{-1}.$$
Using this bound together with the latter two bounds, the expression for $S$ simplifies to
$$ S = \lim_{\lambda\to \infty}\lambda^{\frac{n-2}{2}}\int_I\int_{\mathcal N_\gamma} V_3(x^0,t,y'') e^{4i\sigma x^0}e^{-4\lambda\Im \Theta}e^{-4\sigma\Re \Theta} |v_{0}(t,y'')|^4\,dx^0\,dV_{g}$$
where we have extended $V_3$ to all of $I \times M$ through setting it to zero outside $\M$. Next, and by using the estimate
\[
\lambda^{\frac{n-2}{2}}\int_{I\times N_\gamma} |y''| e^{-4\lambda \Im \Theta} \,dx^0\,dt\,dy'' \lesssim \lambda^{-\frac{1}{2}}
\]
we can further simplify $S$ to obtain
$$ S=\lim_{\lambda\to \infty}\lambda^{\frac{n-2}{2}}\int_{I\times\mathcal N_\gamma} V_3(x^0,t,0) e^{4i\sigma x^0}e^{-4\sigma t} e^{-4\lambda\Im \Theta}|v_{00}(t)|^4\,dx^0\,dt\,dy''.$$
Finally, applying stationary phase together with equations \eqref{stationary_ph_0} and \eqref{amplitude_principal_1}, we conclude that the Dirichlet-to-Neumann map $\Lambda_V$ uniquely determines the expression
\bel{S0} \int_{\tau_-}^{\tau_+} e^{\xi t}\mathscr F V_3(\xi,\gamma(t)) \,|\det Y(t)|^{-1}\,dt,\quad \text{for all $|\xi|<\frac{\sigma_0}{4}$},\ee
with $\mathscr F V_3$ denoting the Fourier transform of $V_3$ with respect to $x^0$ variable. Here we have extended the function $V_3$ to the set $\R \times M$  by setting it to zero outside of $\M$. We can summarize the analysis thus far as follows. For each point $p \in \mathscr T$ and each admissible geodesic $\gamma$ passing through $p$, we have shown that the Dirichlet-to-Neumann map $\Lambda_V$ uniquely determines the expression \eqref{S0}. Now, recall from \eqref{Y} that the $(n-2)\times (n-2)$ matrix $Y$ solves the complex Jacobi equation \eqref{jacobi} along $\gamma$ subject to the initial condition
$$Y(\tau_0)=Y_0, \quad \text{and}\quad \dot Y(\tau_0)=Y_1=H_0Y_0,$$ 
where $H_0$ is symmetric and satisfies $\Im H_0>0$ and $Y_0$ is non-degenerate. This implies that $Y \in \mathbb Y_\gamma$.

Returning to \eqref{S0} and applying Proposition~\ref{jacobi ray inversion} together with the fact that $\sigma_0$ is arbitrary, we conclude that 
$$ \mathscr F V_3(\xi,p)=0,$$
for all $\xi \in \R$ and all $p \in \mathscr T$. The theorem now follows using the density of $\mathscr T$ in $M$ and applying the inverse Fourier transform.
\end{proof}

Before presenting the proof for the more general case $m \geq 4$, we need a lemma.
\begin{lemma}
\label{nonvanishing}
Given any point $p \in M$, there exists a smooth solution $W_{p}$ to the equation 
$$\mathcal P_{V_1} W_{p}=0$$ 
with the additional property that $W_{p}(p)\neq 0$.
\end{lemma}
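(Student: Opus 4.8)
The plan is to produce the required solution $W_p$ as a complex geometric optic solution of the type already constructed in Section~\ref{cgo_section}, evaluated at a geodesic passing through (or very near) the point $p$, and then to argue that the leading term is nonzero at $p$.

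First I would fix $p\in M$ and choose a maximal non-self-intersecting geodesic $\gamma$ in $M$ through a point $\tilde p$ arbitrarily close to $p$; by density of such geodesics (or simply by taking $\tilde p=p$ and a short geodesic arc through $p$, which can always be arranged since $M$ is a manifold without further constraints here) we may assume $\gamma$ passes through $p=\gamma(t_0)$ for some interior time $t_0$. Applying Proposition~\ref{cgo_sol} with, say, $s=1$, we obtain a solution $\mathcal U^{+}_\rho\in\CI^3(\M)$ to $\mathcal P_{V_1}\mathcal U^{+}_\rho=0$ of the form \eqref{CGO_ansatz}, namely
$$
\mathcal U^{+}_\rho(x)=e^{\lambda x^0}\bigl(e^{i\sigma x^0}\mathcal V^{+}_\rho(x^0,x')+\mathcal R^{+}_\rho(x^0,x')\bigr),
$$
with $\|\mathcal R^{+}_\rho\|_{\CI^3(\M)}\lesssim\lambda^{-1}$. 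The function $W_p$ will be $\mathcal U^{+}_\rho$ for a suitably large but fixed $\lambda$.

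Next I would examine the size of $\mathcal U^{+}_\rho$ at the point $(x^0,p)$ for a fixed $x^0\in I^{\text{int}}$. From \eqref{quasi_0}, \eqref{amplitude}, \eqref{amplitude_principal_1} and Lemma~\ref{fermi}, at the geodesic (where $y''=0$) the phase $\Theta$ has vanishing imaginary part and $\Theta_0(y^1,0)=y^1$, so $e^{i\rho\Theta}$ restricted to $\gamma$ has modulus $1$, and the amplitude satisfies $a^{+}_\rho(x^0,t_0,0)=v_{00}(t_0)+O(\lambda^{-1})=(\det Y(t_0))^{-1/2}+O(\lambda^{-1})$. Since $Y(t)\in\mathbb Y_\gamma$ is non-degenerate for all $t$ by Lemma~\ref{Ywell-posed}, the principal amplitude $(\det Y(t_0))^{-1/2}$ is a nonzero complex number. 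Hence, factoring out $e^{\lambda x^0}e^{i\sigma x^0}$, which is a nonzero scalar, we get
$$
\bigl|e^{-\lambda x^0}e^{-i\sigma x^0}\,\mathcal U^{+}_\rho(x^0,p)\bigr|
=\bigl|a^{+}_\rho(x^0,t_0,0)\bigr|+O(\lambda^{-1})
\ge \tfrac12\,\bigl|\det Y(t_0)\bigr|^{-1/2}>0
$$
for all $\lambda$ sufficiently large. Thus $\mathcal U^{+}_\rho(x^0,p)\neq0$, and setting $W_p:=\mathcal U^{+}_\rho$ (for one such fixed admissible $\lambda$, with $\lambda^2\notin\{\mu_l\}$) gives a smooth — indeed $\CI^3$, and in fact $\CI^\infty$ on $\M$ since $V_1$ is smooth and the construction is smooth — solution of $\mathcal P_{V_1}W_p=0$ with $W_p(p)\neq0$. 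If one insists on $\CI^\infty$ regularity one simply notes that the quasimode $\mathcal V^{+}_\rho$ is smooth by construction and the remainder $\mathcal R^{+}_\rho$ can be taken smooth by elliptic regularity applied to $\mathcal L_\lambda\mathcal R^{+}_\rho=-\mathcal L_\lambda(e^{i\sigma x^0}\mathcal V^{+}_\rho)$ with smooth right-hand side.

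The main obstacle, such as it is, is bookkeeping rather than conceptual: one must make sure the chosen geodesic $\gamma$ genuinely passes through $p$ (not merely near it) so that $W_p(p)$, not just $W_p$ near $p$, is controlled — this is why I would either invoke a direct short-arc construction or, in the spirit of Definition~\ref{adm}, note that the quasimode amplitude is continuous in the transversal variable $y''$ and bounded below near $y''=0$, so that $W_p(x^0,x')\neq0$ for all $x'$ in a full neighborhood of $\gamma$ in $M$, which certainly contains $p$ once $\tilde p$ is close enough. The only quantitative input needed is that the error terms ($\mathcal R^{\pm}_\rho$ and the subprincipal amplitudes $\rho^{-1}v^{+}_1,\dots$) are $O(\lambda^{-1})$ uniformly on $\M$, which is exactly the content of Lemma~\ref{quasi_est} and Proposition~\ref{cgo_sol}.
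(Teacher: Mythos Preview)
Your proposal is correct but follows a genuinely different route from the paper. The paper argues by contradiction via the Dirichlet Green's function $G(x;y)$ for $\mathcal P_{V_1}$ on $\M$: if every solution vanished at $p$, then Green's identity would force $\partial_\nu G(p;\cdot)\equiv 0$ on $\partial\M$; combined with $G(p;\cdot)|_{\partial\M}=0$ and elliptic unique continuation this would make $G(\cdot;p)$ supported at $\{p\}$, hence a linear combination of $\delta(\cdot;p)$ and its derivatives, contradicting the regularity $G(\cdot;p)\in H^{2-n/2-\epsilon}$. Your approach is instead constructive: you exhibit $W_p$ explicitly as a CGO solution concentrated along a geodesic through $p$, and verify that the leading amplitude $(\det Y(t_0))^{-1/2}$ is nonzero there while all correction terms are $O(\lambda^{-1})$.

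The paper's argument is more self-contained and general---it works for any second-order elliptic operator enjoying the unique continuation property and uses nothing from Section~\ref{cgo_section}---whereas your argument economizes by recycling machinery already built for the main theorems, at the cost of importing the standing Gaussian-beam hypotheses (a non-self-intersecting geodesic segment through $p$, which as you correctly note always exists locally). Two small corrections: on the geodesic one has $|e^{i\rho\Theta}|=e^{-\sigma t_0}$, not $1$, so either take $\sigma=0$ or simply carry the harmless nonzero factor through; and for $\CI^\infty$ regularity up to $\partial\M$, interior elliptic regularity alone is insufficient since $\mathcal R^+_\rho$ carries no boundary condition---instead note that the explicit solution produced in the proof of Proposition~\ref{semiclassical} is the restriction to $\M$ of a function lying in every $H^k(\hat T)$, hence smooth on $\overline{\M}$.
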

\begin{proof}
Let us consider $G(x;y)$ to denote the Dirichlet Green's function associated to the operator $\mathcal P_{V_1}$ on $\M$. For each fixed $y \in \M$, $G(x;y)$ is the unique distributional solution to 
\bel{greens1}
\begin{aligned}
\begin{cases}
\mathcal P_{V_1}G(x;y)=\delta(x;y), 
&\forall x \in \M
\\
G(x;y) = 0 
&\forall x \in \pd \M
\end{cases}
    \end{aligned}
\ee
For each fixed $y$, $G(x;y)$ is smooth away from $y$. Moreover, since $\delta(x;y) \in H^{-\frac{n}{2}-\epsilon}(\M)$ for any $\epsilon>0$ and any fixed $y\in \M$, it follows from elliptic regularity that $G(x;y) \in H^{2-\frac{n}{2}-\epsilon}(\M)$ for any $\epsilon>0$. Let $h \in \CI^{\infty}(\pd \M)$ be arbitrary and choose $w$ such that $\mathcal P_{V_1}w=0$ subject to $w|_{\pd \M}=h$. Applying Green's identity we have
$$ w(x)=\int_{\M}\mathcal P_{V_1} G(x;y) w(y)\,dV_\g-\int_{\M} G(x;y) \mathcal P_{V_1}w(y)\,dV_\g = -\int_{\pd\M} \pd_\nu G(x;y) h(y)\,d\sigma_\g$$
If the claim fails to hold, that is $w(p)=0$ for all $h \in \CI^{\infty}(\pd\M)$, then clearly we must have $\pd_\nu G(p;y)=0$ for all $y \in \pd \M$. Since $G(p;y)$ also vanishes there, by the elliptic unique continuation principle, that $G(x;p)$ must vanish away from $p$ implying that it is supported at the point $\{p\}$. Subsequently, it must be a linear combination of $\delta(x;p)$ and its derivatives at the point $\{p\}$. But as already mentioned in the beginning of the proof $G(x;p)$ is smoother than $\delta(x;p)$, and whence it must vanish everywhere which is a contradiction with \eqref{greens1}. 
\end{proof}
\begin{proof}[Proof of Theorem 1]
We use an induction argument on $m \geq 4$ to show that $V_m$ can be uniquely determined from the DN map $\Lambda_V$. We have already proved this for $m=3$. Now let $m>3$ and assume that $V_j$ has been determined for all $j<m$. We consider a point $p \in \mathscr T$ with an admissible geodesic $\gamma$ passing through $p$. Similar to the proof of Propostion~\ref{V_3}, we pick $s\geq\frac{n}{8}$ and consider the solutions $\mathcal U^{\pm}_\rho$ and define $f_\rho^{\pm}$ analogously. We also choose the function $W_{p}$ as in Lemma~\ref{nonvanishing} and let $h$ be its trace on the boundary $\pd \M$.

Following Section~\ref{section_lin} we define
$$L_m:=L_{f_\rho^{+},f_{\rho}^{+},f_{\rho}^{-},\underbrace{h,\ldots,h}_{\text{$m-3$ times}}}\quad \text{and} \quad H_m:=H_{f_\rho^{+},f_{\rho}^{+},f_{\rho}^{-},\underbrace{h,\ldots,h}_{\text{$m-3$ times}}}.$$
Applying \eqref{eq_lin} we note that $L_m$ solves the equation
\bel{eq_lin_2}
\begin{aligned}
\begin{cases}
\mathcal P_{V_1}\,L_m\,=\tilde{V}_m\,(\mathcal U_\rho^+)^2(\mathcal U^{-}_\rho)+H_m, 
&\forall x \in \M
\\
L_m= 0
&\forall x \in \pd \M
\end{cases}
    \end{aligned}
\ee
where 
$$\tilde{V}_m(x)= V_m(x)\, W_p(x)^{m-3}.$$ 
Recall from Section~\ref{section_lin} that $H_{m}$ is explicitly known as it only depends on $V_1,\ldots,V_{m-1}$ and these functions are known from the induction hypothesis. Applying Green's identity we observe that
\bel{}
\begin{aligned}
&-\int_{\pd \M} f_{\rho}^{-}\, (\pd_{\nu}L_m)\,d\sigma_{\g}+\int_{\pd \M} (\pd_{\nu}\mathcal U_{\rho}^{-})\, L_m\,d\sigma_\g \\
&=\int_{\M} \mathcal U_{\rho}^{-} (\mathcal P_{V_1} L_m)\,dV_{\g} - \int_{\M} (\mathcal P_{V_1} \mathcal U_{\rho}^{-}) L_m\,dV_{\g}\\
&=\int_{\M}\tilde{V}_m\,(\mathcal U_{\rho}^{-}\mathcal U_\rho^{+})^2\,dV_{\g}+\int_{\M}H_m\,\mathcal U_\rho^-\,dV_{\g}. 
\end{aligned}
\ee
We deduce that the map $\Lambda_V$ uniquely determines the expression
$$\int_{\M}\tilde{V}_m\, (\mathcal U_\rho^+)^2(\mathcal U_\rho^-)^{2}\,dV_{\g}.$$
Thus using the same asymptotic analysis as in the proof of Proposition~\ref{V_3}, together with Proposition~\ref{jacobi ray inversion}, we conclude that $\Lambda_V$ uniquely determines $\tilde{V}_m(p)$ and consequently by Lemma~\ref{nonvanishing} it determines 
the function $V_m$ at the all points $(x^0,p)\in \M$ with $p \in \mathscr T$. The proof is completed by noting the density of $\mathscr T$ in $M$.
\end{proof}

\section{Proof of Theorem 2}
\label{section_t2}

\begin{proof}[Proof of Theorem 2]
It suffices to prove that $V_2$ can be uniquely reconstructed from the knowledge of $\Lambda_V$. Indeed, the recovery of the higher order derivatives $V_m$ with $m=3,4,\ldots$, follows by using induction as in the proof of Theorem~\ref{t1}.

We start by choosing an arbitrary point $p \in M$ and assume that $\gamma$ is a maximal non-self intersecting geodesic passing through $p$. Let $\rho=\lambda+i\sigma$ with $\sigma$ fixed and construct a family of complex geometric optic solutions $\mathcal U_\rho^{\pm}$ and $\mathcal U_{2\rho}^{\pm}$ as in Section~\ref{cgo_section} with a decay rate given by an integer $s \geq \frac{n}{6}$ where we recall that $n \in \{3,4\}$. Here, we have assigned the initial values for the ODEs that govern the phase function $\Theta(y')$ at the point $p$. Let us now define 
$$f^{\pm}_\rho=\mathcal U_\rho^{\pm}|_{\pd \M}.$$
Recall that $\mathcal U^{\pm}_\rho \in \CI^{3}(\M)$ and that $\mathcal U^{\pm}_\rho|_{\pd \M}$ are explicitly known. By using the definition of $f_{\rho}^{\pm}$ 
together with the arguments in Section~\ref{section_lin}, we deduce that we know $\pd_\nu L_{f_\rho^+,f_\rho^+}|_{\pd \M}$ for $L_{f_\rho^+,f_\rho^+}$ solving the equation
\bel{eq_lin_2}
\begin{aligned}
\begin{cases}
\mathcal P_{V_1}\,L_{f_\rho^+,f_\rho^+}\,=V_2\, (\mathcal U_\rho^+)^2, 
&\forall x \in \M
\\
L_{f_\rho^+,f_\rho^+}= 0
&\forall x \in \pd \M
\end{cases}
    \end{aligned}
\ee
Let $d\sigma_{\g}$ denote the volume form on $\pd \M$. Applying Green's identity we observe that
\bel{}
\begin{aligned}
&\int_{\pd \M} f_{2\rho}^{-}\, (\pd_{\nu}L_{f_\rho^+,f_\rho^+})\,d\sigma_{\g}-\int_{\pd \M} (\pd_{\nu}\mathcal U_{2\rho}^{-})\, L_{f_\rho^+,f_\rho^+}\,d\sigma_\g \\
=&\int_{\M} \mathcal U_{2\rho}^{-} (\mathcal P_{V_1} L_{f_\rho^+,f_\rho^+})\,dV_{\g} - \int_{\M} (\mathcal P_{V_1} \mathcal U_{2\rho}^{-}) L_{f_\rho^+,f_\rho^+}\,dV_{\g}\\
=&\int_{\M}V_2\,\mathcal U_{2\rho}^{-}(\mathcal U_\rho^{+})^2\,dV_{\g}. 
\end{aligned}
\ee
Thus we can conclude that for all $\lambda>\lambda_0$ with $\lambda^2 \notin \{\mu_l\}_{l \in \N}$, the knowledge of the DN map $\Lambda_V$ uniquely determines the expression 
\bel{}
S_1=\lim_{\lambda\to \infty} \lambda^{\frac{n-2}{2}} \int_{\M}V_2\,\mathcal U_{2\rho}^{-} (\mathcal U_\rho^{+})^2\,dV_{\g}. 
\ee
Applying Proposition~\ref{cgo_sol} and Lemma~\ref{quasi_est}, we have the bounds
\[
\begin{aligned}
 \int_{\M} |V_2||\mathcal R_\rho^+|^2|\mathcal R_{2\rho}^-|\,dV_{\g} &\lesssim |\lambda|^{-3s} \lesssim|\lambda|^{-\frac{n-2}{2}}\lambda^{-1},\\
\int_{\M}|V_2||\mathcal V_\rho^+|^2|\mathcal R_{2\rho}^-|\, dV_{\g} &\lesssim  \lambda^{-\frac{n-2}{2}}\lambda^{-s},\\
\int_{\M}|V_2||\mathcal V_{2\rho}^-||\mathcal V_\rho^+||\mathcal R_{\rho}^+|\, dV_{\g} &\lesssim  \lambda^{-\frac{n-2}{2}}\lambda^{-s},\\
\int_{\M}|V_2||\mathcal V_{2\rho}^-||\mathcal R_{\rho}^+|^2\, dV_{\g} &\lesssim  \lambda^{-\frac{n-2}{2}}\lambda^{-2s}.
\end{aligned}
\]
Next and by using the form of the phase function and the amplitude functions given by equations \eqref{phase_0} and \eqref{amplitude} together with the previous four bounds, we write
$$ S_1 = \lim_{\lambda\to \infty}\lambda^{\frac{n-2}{2}}\int_I\int_{\mathcal N_\gamma} V_2 e^{4i\sigma x^0} e^{-4\lambda\Im \Theta}e^{-4\sigma\Re \Theta} |v_{00}|^2\,v_{00}\,dx^0\,dV_{g}$$
where it is to be understood that $V_2$ is extended to $I\times M$ by setting it to zero outside $\M$. Finally, applying stationary phase together with equations \eqref{stationary_ph_0} and \eqref{amplitude_principal_1}, we deduce that the Dirichlet to Neumann map determines the expression
\bel{S_1_f}\int_{\tau_-}^{\tau_+} e^{\xi t}\mathscr F V_2(\xi,\gamma(t)) \,(\det Y(t))^{-\frac{1}{2}}\,dt,\quad \forall\,|\xi|<\frac{\sigma_0}{4},\ee
with $\mathscr F V_2$ denoting the Fourier transform of $V_2$ with respect to $x^0$ variable. Here, we have extended the function $V_2$ to the entire $\R \times M$  by setting it to zero outside of $\M$. We can summarize the analysis thus far as follows. For each point $p \in M$ and each admissible geodesic $\gamma$ passing through $p$, we have shown that the Dirichlet-to-Neumann map $\Lambda_V$ uniquely determines the expression \eqref{S_1_f}. Repeating the same analysis, this time with the dual choices $L_{f^-_\rho,f^-_\rho}$ and $\mathcal U^{+}_{2\rho}$ we can similarly determine the expression
\bel{S_2_f}\int_{\tau_-}^{\tau_+} e^{\xi t}\mathscr F V_2(\xi,\gamma(t)) \,\overline{(\det Y(t))^{-\frac{1}{2}}}\,dt,\quad \forall\,|\xi|<\frac{\sigma_0}{4}.\ee
Finally, combining the expressions \eqref{S_1_f}--\eqref{S_2_f} we deduce that the map $\Lambda_V$ uniquely determines the expressions
\bel{S} S=\int_{\tau_-}^{\tau_+} e^{\xi t}f(\xi,\gamma(t)) \,(\det Y(t))^{-\frac{1}{2}}\,dt,\quad |\xi|<\frac{\sigma_0}{4},\ee
where $ f \in \{ \Re \mathscr F V_2, \Im \mathscr F V_2\}$.

Applying Proposition~\ref{jacobi ray inversion 1} to the real-valued function $f$ together with the fact that $\sigma_0$ is arbitrary, it immediately follows that $f(\xi,p)$ can be uniquely determined from $\Lambda_V$ for all $\xi \in \R$ and all $p \in M$. We conclude that $f(\xi,x')$ can be uniquely determined from $\Lambda_V$ for all $x' \in M$. Using the inverse Fourier transform, it follows that $\Lambda_V$ uniquely determines the function $V_2$ everywhere in $\M$. 
\end{proof}

\medskip\paragraph{\bf Acknowledgments}
A.F was supported by EPSRC grant EP/P01593X/1. L.O acknowledges support from EPSRC grants EP/R002207/1 and EP/P01593X/1.


\end{document}